\newcommand{\ekv}[2]{\begin{equation}\label{#1}#2\end{equation}}
\newcommand{\be}{\begin{equation}}
\newcommand{\ee}{\end{equation}}
\theoremstyle{plain}
\newtheorem{thm}{Theorem}
\newtheorem{prop}{Proposition}[section]
\newtheorem{lem}[prop]{Lemma}
\theoremstyle{definition}
\numberwithin{equation}{section}
\def\bbbone{{\mathchoice {1\mskip-4mu {\rm{l}}} {1\mskip-4mu {\rm{l}}}
{ 1\mskip-4.5mu {\rm{l}}} { 1\mskip-5mu {\rm{l}}}}}
\def\squarebox#1{\hbox to #1{\hfill\vbox to #1{\vfill}}}
\newcommand{\tA}{\widetilde A}
\newcommand{\tkappa}{\widetilde \kappa}
\newcommand{\tD}{\widetilde D}
\newcommand{\tF}{\widetilde F}
\newcommand{\wtG}{\widetilde G}
\newcommand{\tM}{\widetilde M}
\newcommand{\tOmega}{\widetilde \Omega}
\newcommand{\tpsi}{\widetilde \psi}
\newcommand{\tT}{\widetilde T}
\newcommand{\wtu}{\widetilde u}
\newcommand{\wtv}{\widetilde v}
\newcommand{\tV}{{\widetilde V}}
\newcommand{\wSi}{{\widetilde \Sigma}}
\newcommand{\tSigma}{{\widetilde \Sigma}}
\newcommand{\whD}{{\widehat D}}
\newcommand{\whK}{{\widehat K}}
\newcommand{\whT}{{\widehat T}}
\newcommand{\eps}{\epsilon}
\newcommand{\bSigma}{\boldsymbol{\Sigma}}
\newcommand{\RR}{{\mathbb R}}
\newcommand{\CC}{{\mathbb C}}
\newcommand{\NN}{{\mathbb N}}
\newcommand{\IR}{{\mathbb R}}
\def\t2{{\mathbb T}^2}
\newcommand{\CI}{{\mathcal C}^\infty }
\newcommand{\CIc}{{\mathcal C}^\infty_{\rm{c}} }
\newcommand{\Oo}{{\mathcal O}} 
\newcommand{\cD}{{\mathcal D}}
\newcommand{\cE}{{\mathcal E}}
\newcommand{\TT}{{\mathcal T}}
\newcommand{\cH}{{\mathcal H}}
\newcommand{\cA}{{\mathcal A}}
\newcommand{\cM}{{\mathcal M}}
\newcommand{\cO}{{\mathcal O}}
\newcommand{\cP}{{\mathcal P}}
\newcommand{\cR}{{\mathcal R}}
\newcommand{\cS}{{\mathscr S}}
\newcommand{\defeq}{\stackrel{\rm{def}}{=}}
\newcommand{\rank}{\operatorname{rank}}
\newcommand{\supp}{\operatorname{supp}}
\newcommand{\WF}{\operatorname{WF}}
\newcommand{\WFh}{\operatorname{WF}_h}
\newcommand{\tr}{\operatorname{tr}}
\newcommand{\ind}{\operatorname{ind}}
\newcommand{\rest}{\!\!\restriction}
\renewcommand{\Re}{\mathop{\rm Re}\nolimits}
\renewcommand{\Im}{\mathop{\rm Im}\nolimits}
\newcommand{\ad}{\operatorname{ad}}
\newcommand{\neigh}{\operatorname{neigh}}
\newcommand{\Op}{{\operatorname{Op}^{{w}}_h}}
\renewcommand{\Re}{\mathop{\rm Re}\nolimits}
\renewcommand{\Im}{\mathop{\rm Im}\nolimits}
\newcommand{\ra}{\rangle}
\newcommand{\la}{\langle}
\newcommand{\Id}{{\rm Id}}
\newcommand{\DOCh}{{\mathcal R}(\delta,M_0,h)}
\def\hto0{\xrightarrow{h\to 0}}
\title[From open quantum systems to open quantum maps]
{From open quantum systems to open quantum maps}
\author[S. Nonnenmacher]
{St\'ephane Nonnenmacher}
\author[J. Sj\"ostrand]
{Johannes Sj\"ostrand}
\author[M. Zworski]
{Maciej Zworski}
\address{Institut de Physique Th\'eorique\\
CEA/DSM/PhT, Unit\'e de recherche associ\'ee au CNRS\\
CEA-Saclay\\
91191 Gif-sur-Yvette, France}
\email{snonnenmacher@cea.fr}
\address{Institut de Math\'ematiques de Bourgogne, UFR Science
et Techniques, 9 Avenue Alain Savary -- B.P. 47870, 21078 Dijon 
CEDEX, France} 
\email{jo7567sj@u-bourgogne.fr}
\address{Mathematics Department, University of California \\
Evans Hall, Berkeley, CA 94720, USA}
\email{zworski@math.berkeley.edu}
\begin{document}    

\maketitle   


\section{Introduction and statement of the results}
\label{int}

In this paper we show that for a class of open quantum systems
satisfying a natural dynamical assumption (see \S \ref{da})
the study of the resolvent, and hence of scattering, and of 
resonances, can be reduced, in the semiclassical limit, to the study of open quantum maps,
that is of finite dimensional quantizations of canonical 
relations obtained by truncation of symplectomorphisms derived from
the classical Hamiltonian flow (Poincar\'e return maps).

\begin{figure}
\begin{center}
\includegraphics[width=.9\textwidth]{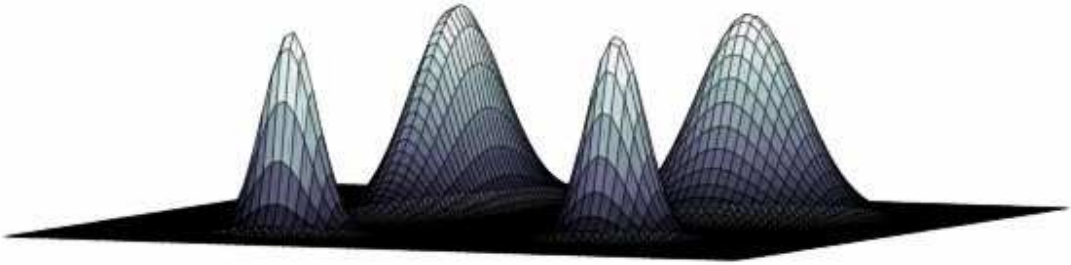}
\caption{\label{f:pot4} An example of a potential, $ V \in \CIc ( \RR^2 ) $,
to which the results apply: the Hamiltonian flow is hyperbolic on 
the trapped set in a range of energies -- see \cite[Appendix c]{SjDuke}.
In this example each energy surface $ p^{-1} ( E ) $ is
three dimensional, so the Poincar\'e section is two dimensional
as shown in Fig.~\ref{f:poi}.}
\end{center}
\end{figure}

We first explain the result in a simplified setting. For that 
consider the Schr\"odinger operator
\begin{equation}
\label{eq:P2}
P(h) =  - h^2 \Delta + V( x ) - 1  \,, \ \ V \in \CIc ( \RR^n ) \,, 
\end{equation}
and let $ \Phi^t $ be the corresponding classical flow on $T^*\IR^n\ni (x,\xi)$:
\begin{gather*}
\Phi^t ( x, \xi ) \defeq ( x( t) , \xi ( t ) ) \,, \\
x'( t ) =  2 \xi ( t) \,, \ \ \xi'( t ) = - dV ( x( t ) ) \,, \ \
x ( 0 ) = x \,, \ \ \xi( 0 ) = \xi \,.
\end{gather*}
Equivalently, this flow is generated by the Hamilton vector field
\be\label{e:Hp}
H_p(x,\xi)=\sum_{j=1}^n\frac{\partial p}{\partial \xi_j}
\frac{\partial}{\partial x_j} - \frac{\partial p}{\partial x_j}
\frac{\partial}{\partial \xi_j}
\ee
associated with the classical Hamiltonian
\be\label{e:symbol-p}
p(x,\xi)=|\xi|^2+V(x)-1\,.
\ee
The energy shift by $-1$ allows us to focus on the quantum
and classical dynamics near the energy $E = 0$, which will make our
notations easier\footnote{There is no loss of generality in this
  choice: the dynamics of the Hamiltonian $\xi^2+\tilde V(x)$ at some
  energy $E>0$ is equivalent with that of $\xi^2 + \tilde V/E -1$ at
  energy $0$, up to a time reparametrization by a factor
  $\sqrt{E}$. The same rescaling holds at the quantum level.}. 
We assume that the Hamiltonian flow has no fixed point at
this energy: $ dp \rest_{p^{-1} ( 0 ) } \neq 0 $.

The trapped set at any energy $ E $ is defined as 
\begin{equation}
\label{eq:K0} K_E\defeq \{ ( x , \xi )\in T^*\RR^n \; : \;  
p ( x, \xi ) = E \,, \  \Phi^t( x, \xi) \text{ remains 
bounded for all }t\in\IR \} \,. 
\end{equation}
The information about spectral and scattering properties 
of $ P=P(h) $ in \eqref{eq:P2} can be
obtained by analyzing the resolvent of $ P $, 
\[ R ( z ) = ( P - z )^{-1} \,, \ \ \ \Im z > 0 \,, \]
and its meromorphic continuation -- see for instance
\cite{PZ} and references given there. More recently semiclassical 
properties of the resolvent have been used to obtain local smoothing
and Strichartz estimates, leading to applications to nonlinear 
evolution equations -- see \cite{BGH} for a recent result and for 
pointers
to the literature. In the physics literature the Schwartz 
kernel of $ R ( z )$ is referred to as Green's function of the
potential $ V $.

The operator $P$ has absolutely continuous spectrum on the interval
$[-1,\infty)$; nevertheless, its resolvent $ R ( z ) $ continues 
meromorphically from $ \Im z > 0 $ to the disk $D ( 0 , 1 )$, in the 
sense that $ \chi R ( z ) \chi $, $ \chi \in \CIc( \RR^n ) $, 
is a meromorphic family of operators, with poles independent of
the choice of $ \chi \not \equiv 0 $ (see for instance \cite[Section 3]{SjZw91}
and \cite[Section 5]{Sj}).

The multiplicity of the 
pole $ z \in D ( 0 , 1 ) $  is given by 
\[  m_R ( z ) \defeq  {\rm{rank} }\, \oint_z \chi R ( w ) \chi dw \,, \]
where the integral runs over a sufficiently small circle around $ z $.

\begin{figure}
\begin{center}
\includegraphics[width=.7\textwidth]{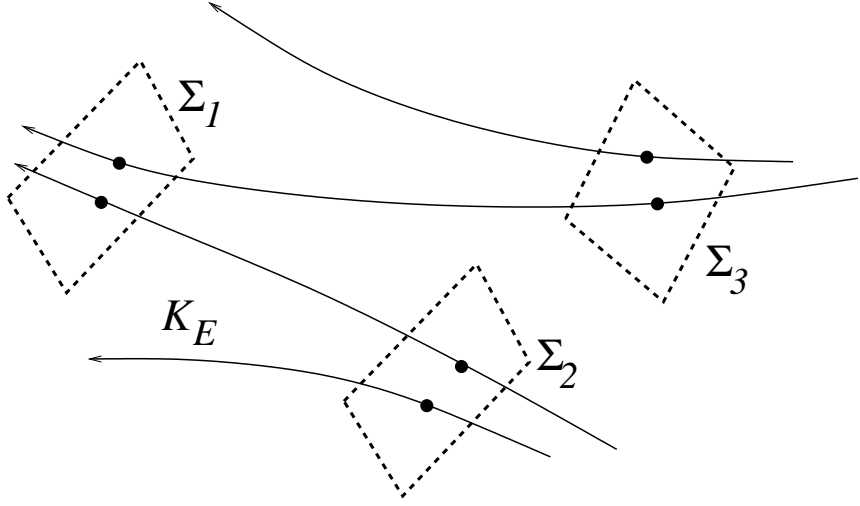}
\caption{\label{f:poi} A schematic view of
a Poincar\'e section $\bSigma=\sqcup_j \Sigma_j$ for $ K_E $ inside $ p^{-1} ( E ) $.
The flow near $ K_E $ can be described by an ensemble of symplectomorphisms
between different components $ \Sigma_j $ -- see \S \ref{da} for abstract
assumptions and a discussion why they are satisfied when the flow
is hyperbolic on $ K_E $ and $ K_E $ has topological dimension one.
The latter condition simply means that the intersections of $ K_E $ with
$ \Sigma_j$'s are totally disconnected.}
\end{center}
\end{figure}

We now assume that at energy $E=0$, the 
flow $ \Phi^t $ is {\em hyperbolic} on the trapped set $ K_0 $ 
and that this set is {\em topologically one dimensional}. 
Hyperbolicity means \cite[Def. 17.4.1]{KaHa} that at any point $\rho=(x,\xi)\in K_0$ the 
tangent space to the energy surface 
splits into the  neutral ($\RR H_p ( \rho) $), stable ($E_\rho^- $), and unstable
($E_\rho^+$) directions:
\be\label{e:hyperb1} T_\rho p^{-1} ( 0 ) = \RR H_p ( \rho) \oplus
E_\rho^- \oplus E_\rho^+ \,,
\ee
this decomposition is preserved through the flow, and is
characterized by the following properties:
\be\label{e:hyperb2}
\exists\, C>0,\ \exists\, \lambda>0,\quad \| {d \exp tH_p(\rho)v} \| 
\leq C\,e^{-\lambda |t|}\| {v} \|
\,,\quad
\forall\, v\in E^{\mp}_\rho,\ \pm t>0\,.
\ee
When $ K_0 $ is topologically one dimensional we can find a
Poincar\'e section which reduces the flow near $ K_0 $ to 
a combination of symplectic transformations, called the Poincar\'e map
$F$: 
see Fig.\ref{f:poi} for a schematic illustration and \S\ref{da} for 
a precise mathematical formulation. The {\it structural stability} of
hyperbolic flows \cite[Thm. 18.2.3]{KaHa} implies that the above properties will also hold for
any energy $E$ in a sufficientlys short interval $[-\delta,\delta]$
around $E=0$, in particular the flow near $K_E$ can be described
through a Poincar\'e map $F_E$.

Under these assumptions, we are interested in semiclassically locating
the resonances of the operator $ P(h) $ in a  neighbourhood of this energy 
interval:
\[ \DOCh \defeq [-\delta, \delta]+ i [- M_0  h \log (1/h) , M_0  h \log ( 1/h ) ] \,, \]
where $\delta, M_0  $ are independent of $h\in (0,1]$. 
Here the 
$ h \log(1/h ) $-size neighbourhood is natural in view of results on 
resonance free regions in case of no trapping -- see \cite{Ma}.

To characterize the resonances in $ \DOCh $ we 
introduce a family of ``quantum propagators'' quantizing the
Poincar\'e maps $F_E$.
\begin{thm}\label{t:s}
Suppose that $ \Phi^t $ is hyperbolic on $ K_0 $ 
and that $ K_0 $ is topologically one dimensional. More generally,
suppose that $P(h)$ and $\Phi^t$ satisfy the assumptions of
\S\ref{ass}-\S\ref{da}.
 
Then, for any $\delta>0$ small enough and any $M_0>0$, there exists
$h_0>0$ such that there exists a family of matrices, 
$$ \{ M ( z , h ),\ z \in \DOCh,\ h\in (0,h_0] \}\,, $$
holomorphic in the variable
$ z $,  and satisfying
\[   h^{-n+1} /C_0 \leq   \rank M ( z , h ) \leq  C_0 h^{-n+1 } \,,  \ \ C_0 > 1 \,, \]
such that for any $h\in (0,h_0]$, the zeros of 
\[   
\zeta ( z , h ) \defeq \det ( I - M ( z , h ) )  \,, 
\]
give the resonances of $ P(h) $ in $ \DOCh $, with correct multiplicities.

The matrices $ M ( z , h ) $ are {\it open quantum maps} associated
with the Poincar\'e 
maps $F_{\Re z}$ described above: for any $L>0$,
there exist a family of $h$-Fourier 
integral operators, $ \{\cM ( z , h ) \}$, quantizing the Poincar\'e 
maps $F_{\Re z}$ (see \S\ref{s:quantiz} and \S\ref{fio}), 
and projections $\Pi_h $ (see \S\ref{s:finite-dim})
of ranks
\[  h^{-n+1} /C_0
\leq \rank \Pi_h \leq  C_0 h^{-n+1 }  \,,  \]
such that 
\begin{equation}
\label{eq:Mzh}  M ( z , h ) =  \Pi_h \cM ( z , h ) \Pi_h  + \cO (h^L ) \,. 
\end{equation}
\end{thm}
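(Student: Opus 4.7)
The strategy would be to set up a well-posed Grushin problem for $P(h)-z$ over the rectangle $\DOCh$, thereby reducing the resonance question to a finite-dimensional determinant equation. Specifically, I would construct auxiliary operators $R_-\colon \CC^N \to L^2$ and $R_+\colon L^2 \to \CC^N$ with $N\asymp h^{-n+1}$ so that the block operator
$$ \cP(z) = \begin{pmatrix} P(h)-z & R_- \\ R_+ & 0 \end{pmatrix} $$
is bijective with bounded inverse $\cE(z)$, whose Schur complement block $E_{-+}(z)$ has the property that $z$ is a resonance of $P(h)$ iff $0\in\Spec E_{-+}(z)$, with matching multiplicities. The theorem then reduces to identifying $I-E_{-+}(z)$ with $\Pi_h\cM(z,h)\Pi_h$ modulo $\cO(h^L)$, which gives $\det(I-M(z,h)) = \det E_{-+}(z) \cdot (\text{invertible factor})$ and the statement about resonances.

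To build $R_\pm$, I would use the Poincar\'e section $\bSigma=\sqcup_j\Sigma_j$ from \S\ref{da}. Near each $\Sigma_j$, the hyperbolicity assumption and one-dimensionality of $K_0$ make it possible to choose a microlocal symplectic normal form in which the Hamiltonian $p$ reduces to $\xi_1$ and $\Sigma_j$ corresponds to the hypersurface $\{x_1=0\}$. In such coordinates, $R_+$ is naturally defined as a microlocal restriction to $\{x_1=0\}$ composed with a cutoff projection $\Pi_h$ localizing in a shrinking neighbourhood of $K_0\cap\Sigma_j$ inside $\Sigma_j$; and $R_-$ as a suitable right-inverse/adjoint. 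The rank $h^{-n+1}$ comes from the $2(n-1)$-dimensional symplectic volume of a fixed neighbourhood of $K_0\cap\Sigma_j$ in $\Sigma_j$ together with the standard semiclassical counting in each $\Sigma_j$.

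To compute $E_{-+}(z)$, one solves the system $(P(h)-z)u+R_-v_-=0$, $R_+u=v_+$. In the normal form near each section, the PDE $(P-z)u=-R_-v_-$ becomes a transport equation $(hD_{x_1}-z)u=-R_-v_-$, which is integrated by propagating microlocally along the flow from a section $\Sigma_j$ to the next $\Sigma_k$. This propagation is realized by an $h$-Fourier integral operator whose canonical transformation is exactly the restriction to the relevant component of the Poincar\'e map $F_{\Re z}$, with an amplitude carrying the factor $\exp(-i(z-\Re z)T_{jk}/h)$ encoding the $\Im z$-dependent decay over the travel time $T_{jk}$. Summing over transitions $\Sigma_j\to\Sigma_k$, closing up along closed orbits, and absorbing the travel-time phases into the normalization of $R_\pm$, yields an FIO quantization of $F_{\Re z}$, which is the operator $\cM(z,h)$, and hence $I-E_{-+}(z) = \Pi_h\cM(z,h)\Pi_h + \cO(h^L)$ as claimed in \eqref{eq:Mzh}.

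The main obstacle is controlling errors uniformly across the spectral window of vertical width $M_0 h\log(1/h)$: this is the natural Ehrenfest scale for hyperbolic dynamics, since by \eqref{e:hyperb2} trajectories separate exponentially at rate $\lambda$, and standard FIO expansions in $h$ degrade after $\cO(\log(1/h))$ iterations. The remedy is to propagate only between consecutive sections within a single matrix-element computation (so each FIO is evaluated over a bounded time), and to quantify leakage outside a sufficiently tight neighbourhood of $K_0$ through propagation-of-singularities and escape estimates provided by the hyperbolic structure. The topological one-dimensionality of $K_0$ is essential here: it guarantees the existence of the disconnected transversal structure of the $\Sigma_j$'s, so that the full flow near $K_0$ genuinely decomposes into a finite collection of symplectic maps and the open quantum map $\cM(z,h)$ is a well-defined finite assembly of local FIOs rather than a diffuse global propagator.
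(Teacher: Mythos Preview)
Your high-level architecture is right: a Grushin problem built from the Poincar\'e section, with $E_{-+}(z)$ identified (up to $I$) with a quantized return map truncated by finite-rank projectors. But two load-bearing ingredients are missing, and without them the argument does not close.

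First, you set up the Grushin problem for $P(h)-z$ on $L^2$. This cannot be well posed: $z\in\DOCh$ sits in (or just below) the continuous spectrum, so $P(h)-z$ is not Fredholm and the Schur complement machinery does not apply. The paper works with the complex-scaled operator $P_{\theta,R}$ (with $\theta\sim h\log(1/h)$), which turns resonances into genuine eigenvalues and makes $P_{\theta,R}-z$ Fredholm of index zero on the relevant rectangle; the identity $\chi(P_{\theta,R}-z)^{-1}\chi=\chi(P-z)^{-1}\chi$ then transfers the conclusion back to resonances of $P$.

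Second, and more subtly, your proposal does not explain why $\Pi_h\cM(z,h)\Pi_h$ approximates the action of the monodromy to $\cO(h^L)$. The issue is that $\cM_{jk}(z)v_+^k$ is microlocalized in the arrival set $\tA_{jk}(z)$, which is \emph{not} contained in the region where $\Pi_j$ acts like the identity: hyperbolic spreading pushes mass along the unstable direction, outside any fixed neighbourhood of $\TT_j$. Propagating ``only between consecutive sections'' bounds the FIO time but does nothing about this leakage. The paper's cure is to work in exponentially weighted spaces $H_G$, $H_{g_j}$ built from an escape function $G_0$ (constant along the flow near each $\Sigma_j$, strictly increasing along the flow away from a thin tube around $K_0$). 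The key estimate (Lemma~\ref{l:t_0} and the discussion around \eqref{eq:G00}) is that if $\rho\in S_k$ maps to a ``bad'' point outside $S'_j$, then $G_0$ has increased by at least $t_0>0$ along that trajectory, so in the $H_{g_j}$ norm the bad piece is $\cO(h^{Mt_0/2})$. Choosing $M$ large gives $\cO(h^L)$ for any $L$. This weight mechanism, together with matching weighted auxiliary norms on the transversal spaces, is what makes $(1-\Pi_j)\cM_{jk}(z)\Pi_k$ small and hence makes the finite-rank truncation legitimate. Your reference to ``escape estimates'' gestures at this, but the actual construction---the deformed norms, the compatibility condition $H_pG_0=0$ near $\Sigma_j$ so that $G$ induces a transversal weight $g_j$, and the quantitative lower bound on the increase of $G_0$ for trajectories leaving the tube---is the technical heart of the proof and is absent from your sketch.
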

The statement about the multiplicities in the theorem says that 
\be
\label{eq:mumu} \begin{split}
m_R ( z ) &  = \frac 1 { 2 \pi i } \oint_z \frac { \zeta ' ( w ) } {\zeta ( w ) } 
dw \\
& =  -\frac 1 { 2 \pi i }\tr  \oint_z ( I - M ( w ) )^{-1} M'( w ) dw \,. 
\end{split} 
\ee
A more precise version of Theorem \ref{t:s}, involving complex scaling and 
microlocally deformed spaces (see \S \ref{cs} and \S \ref{cef} 
respectively), will be given in Theorem \ref{t:mg} in \S \ref{igp}.
In particular Theorem~\ref{t:mg} gives us a full control over both the 
cutoff resolvent of $ P $,  $ \chi R ( z ) \chi $, and the full
resolvent $ ( P_{\theta,R} - z )^{-1} $ of the complex scaled operator $ P_{\theta,R} $,
in terms of the family of matrices $M(z,h)$; for this
reason, the latter is often called an {\em effective Hamiltonian} for $P$.

The mathematical applications of Theorem \ref{t:s} and its
refined version below include
simpler proofs of fractal Weyl laws \cite{SjZw04} and of the 
existence of resonance free strips \cite{NZ2}. The advantage lies
in eliminating flows and reducing the dynamical analysis to that
of maps. That provides an implicit second microlocalization without
any technical complication (see \cite[\S 5]{SjZw04}). The key is
a detailed understanding of the operators $ \cM (z,h ) $ stated in
the theorem.

\medskip

\noindent
{\em Relation to semiclassical trace formul{\ae}.}
The notation $\zeta(z,h)$ in the above theorem hints at the resemblance
between this determinant and a {\it semiclassical zeta function}. 
Various such functions have been introduced 
in the physics literature, to provide 
approximate ways of computing eigenvalues 
and resonances of quantum chaotic systems 
-- see \cite{Voros88,Gutz90,CvRosVatRug93}.

These semiclassical zeta functions are defined through formal
manipulations 
starting from the Gutzwiller trace formula -- see 
\cite{SjZw02} for a mathematical treatment and references.
They are given by 
sums, or Euler products, over periodic orbits where each term, or factor is an 
asymptotic series in powers of $h$. Most studies
have concentrated on the zeta function defined by the principal term, 
without $h$-corrections, which strongly resembles
the Selberg
zeta function defined for surfaces of constant negative curvature. 
However, unlike the case of the Selberg zeta function,
there is no known rigorous connection between the zeroes of the
semiclassical zeta function and the exact eigenvalues or resonances 
of the quantum system, even in the semiclassical limit. 
Nevertheless, 
numerical studies have indicated that the semiclassical zeta function admits
a strip of holomorphy beyond the axis of absolute convergence, and that its zeroes there are close to actual 
resonances \cite{CvRosVatRug93,wirzba}.

The traces of  $M(z,h)^k$, $k\in\NN$ admit semiclassical expressions
as sums over periodic points, which leads to a {\em formal}
representation of
$$
\zeta(z,h)=\exp\Big\{-\sum_{k=1}^\infty \frac{\tr M(z,h)^k}{k}\Big\}
$$ 
as a product over periodic points. That gives it the 
same form as the semiclassical zeta functions in the physics literature. 
In this sense, the 
function $\zeta(z,h)$ is a resummation of these formal expressions. As
will become clear from its construction below, the operator
$M(z,h)$ is not unique: it depends on many choices which 
affect the remainder term $\cO(h^L)$ in \eqref{eq:Mzh}.
However, the 
zeroes of $ \zeta ( z , h ) $ in $\DOCh$ 
are the exact resonances of the quantum Hamiltonian.


\medskip

\noindent
{\em Comments on quantum maps in the physics literature.}
Similar methods of analysis have been introduced in the theoretical physics
literature devoted to quantum chaos. The classical case involves a 
reduction to the boundary for obstacle problems: when the obstacle
consists of several strictly convex bodies, none of which intersects
a convex hull of any other two bodies, the flow on the trapped set is hyperbolic. The 
reduction can then be made to boundaries of the convex bodies, 
resulting with operators quantization Poincar\'e maps -- see 
Gaspard and Rice \cite{GaRi},
and for a 
mathematical treatment G\'erard \cite{Ge}, in the case of two convex bodies, 
and \cite[\S 5.1]{NSZ2}, for the
general case. Fig.\ref{f:ott} illustrates the trapped 
set in the case of three discs. The semiclassical analogue 
of the two convex obstacle, a system with one closed hyperbolic
orbit, was treated by G\'erard and the second author in \cite{GeSj}.
The approach of that paper 
was also based on the quantization of the Poincar\'e map near this orbit.

\begin{figure}
\begin{center}
\includegraphics[width=.9\textwidth]{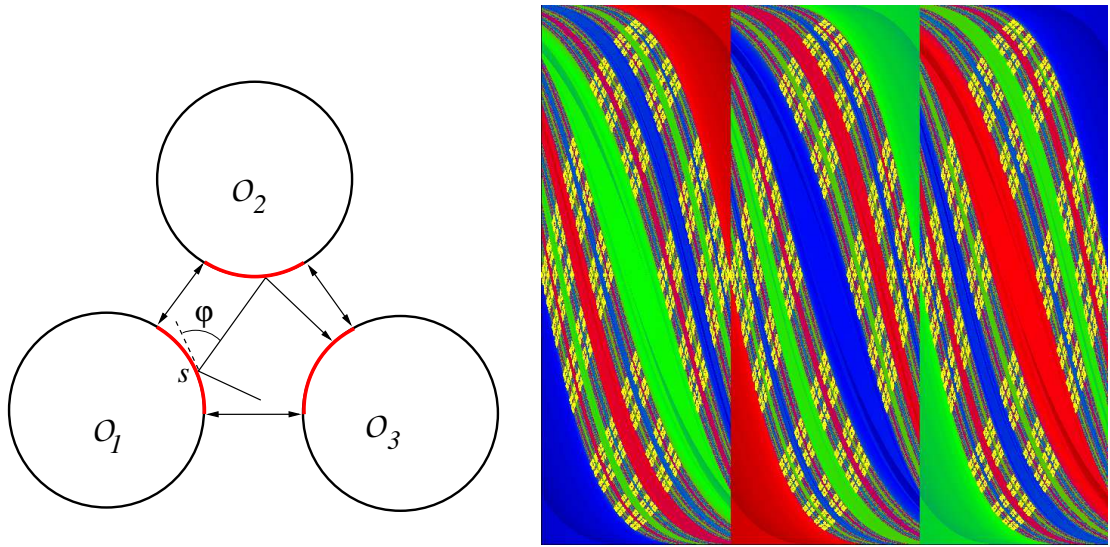}
\caption{\label{f:ott} This figure, taken from \cite{Ott}, 
shows the case of symmetric three disc scattering
problem (left), and the associated
Poincar\'e section (right). The section is the union of the three coball bundles
of circle arcs (in red) parametrized by $ s $ (the length parameter on the circle,
horizontal axis), and $\cos \varphi $ (vertical axis), where $ \varphi $ is 
the angle between the velocity after impact and the tangent to the circle. Green, blue,red
strips correspond to different regions of forward escape; 
they are bounded by components of the stable manifold. 
The trapped
set, $ {\mathcal T} $, shown in yellow,
is the intersection of the latter with the unstable manifold.}
\end{center}
\end{figure}

A reduction of
a more complicated quantum system to 
a quantized Poincar\'e map was proposed in the physics literature.
Bogomolny \cite{Bogo} studied a 
Schr\"odinger operator $P(h)$ with discrete spectrum, 
and constructed a family of energy dependent quantum transfer operators $T(E,h)$, which are integral 
operators acting on a hypersurface
in the configuration space. These transfer operators 
are  asymptotically unitary as $h\to 0$. 
The eigenvalues of $P(h)$ are then obtained, in the semiclassical limit,
as the roots of the equation $\det(1-T(E))=0$. Smilansky and co-workers 
derived a similar 
equation in the case of closed Euclidean 2-dimensional billiards 
\cite{DorSmil92}, replacing $T(E)$ by a (unitary) scattering matrix $S(E)$
associated with the dual scattering problem. Prosen \cite{Prosen95}
generalized Bogomolny's approach to a nonsemiclassical setting.
Bogomolny's
method was also extended to study quantum scattering situations \cite{GeorPran95,OzoVall99}.

Open quantum maps have first been defined in the quantum chaos literature
as toy models for open quantized chaotic systems
(see \cite[\S 2.2]{NZ1}, \cite[\S 4.3]{NZ12} and references given there). 
They generalized
the unitary quantum maps used to mimic bound chaotic systems \cite{DEGr03}.
Some examples of open quantum maps on the 2-dimensional torus or the 
cylinder,
have been used as models in various physical settings: 
Chirikov's quantum standard map (or quantum kicked rotator) was first 
defined in the context of plasma physics,
but then used as well to study ionization of atoms or molecules 
\cite{chirikov81}, as well as transport properties 
in mesoscopic quantum dots \cite{tworzydlo03}. Other maps, 
like the open baker's map, were
introduced as clean model systems, for which the classical dynamics is well
understood \cite{SaVa96,NZ12}. 
The popularity of quantum maps mostly stems from the much simplified 
numerical study they offer, both at the quantum and classical levels, 
compared with the case
of Hamiltonian flows or the corresponding Schr\"odinger operators.  For instance, the distribution
of resonances and resonant modes 
has proven to be much easier to study numerically for open quantum maps, than for realistic flows 
\cite{BorGuarShep91,schomerus,NZ1,KNPS06,NoRu07}.
Precise mathematical definitions of quantum maps on the torus phase space
are given in \cite[\S 4.3-4.5]{NZ1}.

\medskip

\noindent
{\em Organization of the paper.} In the remainder of this section 
we give assumptions on the operator $ P $ and
on the corresponding classical dynamical system, in particular we introduce
a Poincar\'e section $\bSigma$ and map associated with the classical
flow. We refer to 
results of Bowen and Walters \cite{BowenWalters72} 
to show that these assumptions are satisfied if the trapped set
supports a hyperbolic flow, and is topologically one dimensional, which 
is the case considered in Theorem~\ref{t:s}.

In \S \ref{pr} we recall various tools needed in our proof:
pseudodifferential calculus, the concept of semiclassical 
microlocalization, local $h$-Fourier integral operators associated
to canonical tranformation (these appear
in Theorem \ref{t:s}), complex scaling (used to define resonances as eigenvalues
of nonselfadjoint Fredholm operators), microlocally deformed
spaces, and Grushin problems used to define the effective Hamiltonians.

In \S \ref{gps} we follow a modified strategy of \cite{SjZw02} and construct
a microlocal Grushin problem associated with the Poincar\'e map on $\bSigma$.
No knowledge of that paper is a prerequisite but the self-contained discussion 
of the problem for the explicit case of $ S^1 $ given in \cite[\S 2]{SjZw02} can illuminate the complicated procedure presented here. In 
\cite[\S 2]{SjZw02} one finds the proof of the classical Poisson formula
using a Grushin problem approach used here. 

Because of the hyperbolic nature of the flow the microlocal Grushin problem
cannot directly be made into a globally well-posed problem -- see the remark
at the end of \S \ref{gps}. This serious difficulty is overcome in 
\S \ref{wpg} by 
adding microlocal weights adapted to the flow. This and suitably 
chosen finite dimensional projections lead to a well posed Grushin 
problem, with an effective Hamiltonian essentially given by a 
quantization of the Poincar\'e map: this fact is summarized in
Theorem~\ref{t:mg}, 
from which Theorem~\ref{t:s} is a simple corollary.

\medskip

\noindent
{\sc Acknowledgments.}
We would like to thank the National Science Foundation
for partial support under the grant 
DMS-0654436. 
This article was completed while the first author was visiting the
Institute of Advanced Study in Princeton, supported by the National
Science Foundation under agreement No. DMS-0635607.  The first and
second authors were also partially supported by the Agence Nationale
de la Recherche under the grant ANR -09-JCJC-0099-01.
Thanks also to Edward Ott for his permission to include Fig.\ref{f:ott} in 
our paper.

\section{Assumptions on the operator and on classical dynamics}
\label{assu}

Here we carefully state the needed assumptions on quantum
and classical levels.

\subsection{Assumptions on the quantum Hamiltonian $P(h)$}\label{ass}
Our results apply to operators $P(h)$ satisfying general assumptions given in 
\cite[\S 3.2]{NZ2} and \cite[(1.5),(1.6)]{SjZw04}. In particular,
they apply to certain elliptic differential operators on manifolds $ X $ of the form
\[  X = X_R \sqcup \bigsqcup_{j=1}^J \left( 
\RR^n \setminus \overline {B_{\RR^n} ( 0 , R )} \right) \,, \]
where $ R > 0 $ is large and $ X_R $ is a compact subset of $ X $. 
The reader interested
in this higher generality should consult those papers.

Here we will recall these assumptions only in 
the (physical) case of differential operators on $ X=\RR^n $.
We assume that
\begin{equation}
\label{eq:gac}
 P(h) = \sum_{|\alpha | \leq 2 } a_\alpha ( x , h ) ( h D_x )^\alpha \,,
\end{equation}
where $a_\alpha ( x, h )$ are bounded in $\CI(\RR^n)$, 
$a_\alpha ( x, h ) = a_\alpha^0 ( x ) + \cO( h )$ in $ \CI $, and
$a_\alpha(x , h)=a_\alpha(x)$ is independent of $h$ for $|\alpha|=2$.
Furthermore, for some $C_0 > 0 $ the functions 
$ a_\alpha ( x , h ) $ have holomorphic extensions to
\begin{equation}
\label{ge.2} \{x\in {\CC}^n \; : \; |\Re x|>C_0 \,, \ \  |\Im x|<|\Re x|/C_0\} \,,  
\end{equation} 
they are bounded uniformly with respect to $h$, and 
$a_\alpha ( x, h ) = a_\alpha^0 ( x ) + \cO( h )$
on that set.

Let $P(x,\xi )$
denote the (full) Weyl symbol of the operator $P$, so that
$P=P^w(x;hD;h)$, and assume 
\begin{equation}
\label{ge.3}
{P(x,\xi ;h)\to \xi ^2-1}
\end{equation}
when $x\to \infty $ in the set (\ref{ge.2}), uniformly with respect to
$(\xi ,h)\in K\times ]0,1]$ for any compact set $K\Subset \RR^n$
(here, and below, $ \Subset $ means that the set on the left is 
a pre-compact subset of the set on the right). 
We also assume that $P$ is classically elliptic:
\begin{equation}
\label{ge.4}
{
p_2 (x,\xi ) \defeq \sum_{|\alpha | =2}a_\alpha (x)\xi ^\alpha \ne 0\hbox{ on }
T^* \RR^n\setminus \{0\},
}
\end{equation}
and that $P$ is self-adjoint on $L^2(\RR^n)$ with domain
$H^2(\RR^n)$. The Schr\"odinger operator \eqref{eq:P2} corresponds to
the choices $\sum_{|\alpha|=2}a_{\alpha}\xi^\alpha=|\xi|^2$,
$a_\alpha\equiv 0$ for $|\alpha|=1$, and $a_0(x)=V(x)-1$. The
assumption \eqref{ge.3} show that we can also consider a slowly decaying
potential, as long as it admits a holomorphic extension in \eqref{ge.2}.

\subsection{Dynamical Assumptions}
\label{da}
The dynamical assumptions we need roughly mean that the flow $\Phi^t$ on the energy shell $p^{-1}(0)\subset T^*X$ 
can be encoded
by a Poincar\'e section, the boundary of which does not intersect the trapped set $K_0$. 
The abstract assumptions below are satisfied 
when the flow is hyperbolic on the trapped set which is assumed to be
topologically one dimensional -- see Proposition \ref{p:bowen}.

To state the assumption precisely, we notice that 
\ekv{ge.6}{p(x,\xi )=\sum_{|\alpha|\leq 2} a_\alpha ^0(x)\xi ^\alpha} 
is the semi-classical
principal symbol of the operator $P(x,hD;h)$.
We assume that the characteristic
set of $ p $ (that is, the energy surface $p^{-1}(0)$) is a simple hypersurface:
\begin{equation}\label{ge.8}
dp\ne 0 \hbox{ on }p^{-1}(0).
\end{equation}
Like in the introduction, we denote by 
$$
\Phi^t\defeq \exp(tH_p):T^*X\to T^*X
$$ 
the flow generated by the Hamilton vector field $H_p$ (see \eqref{e:Hp}).
\renewcommand\thefootnote{\dag}%

Our assumptions on $p(x,\xi)$ ensure that, for $E$ close to $ 0 $,
we still have no fixed point in $p^{-1}(E)$, 
and the trapped set $K_E$ (defined in \eqref{eq:K0}) is a compact subset of $p^{-1}(E)$.

\medskip

We now assume that there exists a ``nice''
Poincar\'e section for the flow near $K_0$, namely finitely many
compact contractible smooth hypersurfaces $\Sigma _k\subset p^{-1}(0)$,
$k=1,2,\ldots,N$ with smooth boundaries, 
such that 
\begin{equation}
\label{eq:H1} \partial \Sigma _k\cap  K_0=\emptyset \,, \quad \Sigma_k\cap\Sigma_{k'}=\emptyset,\ \ k\neq k',
\end{equation}
\begin{equation}
\label{eq:H2} \text{ $H_p$ is transversal to $\Sigma _k$
uniformly up to the boundary,} 
  \end{equation}
\begin{gather} \label{eq:H3} \begin{gathered}
\text{For every $\rho \in K_0$, there
exist $ \rho _-\in \Sigma _{j_-(\rho )} \,, \quad \rho _+\in \Sigma _{j_+(\rho
)}$ } \\
\text{ of the form $\rho _\pm = \Phi^{\pm t_\pm ( \rho )}(\rho ) $, 
with $0<t_\pm (\rho ) \leq t_{\max} <\infty $, such that } \\
 \{ \Phi^{t}(\rho );\,
-t_-(\rho)<t<t_+(\rho),\ t\ne 0\} \cap \Sigma_k  = \emptyset 
\,, \ \ \forall \, k \,.  
\end{gathered}
\end{gather}
We call Poincar\'e section the disjoint union
$$
\bSigma\defeq \sqcup_{k=1}^N\Sigma _k\,.
$$
The functions $\rho\mapsto \rho _\pm(\rho )$, $\rho\mapsto t_\pm(\rho) $
are uniquely defined ($\rho_\pm(\rho)$ will be called respectively the successor and predecessor of $ \rho $).
They remain well-defined for $\rho$ in some neighbourhood of $K_0$ in $p^{-1}(0))$
and, in such a neighbourhood, depend smoothly on $\rho $ away from $\bSigma$. 
In order to simplify the presentation we also assume the successor
of a point $\rho\in\Sigma_k$ belongs to a different component:
\begin{equation}
\label{eq:H4} 
\text{ If $\rho \in \Sigma _k\cap K_0$ for some $k$,
then $\rho _+(\rho )\in \Sigma _\ell\cap K_0 $ for some $\ell
\ne k$. }
\end{equation}
The section can always be enlarged
to guarantee that this condition is satisfied. For instance, for $ K_0 $ consisting
of one closed orbit we only need one transversal component to have 
\eqref{eq:H1}-\eqref{eq:H2}; to fulfill \eqref{eq:H4} a second component has to be added. 

We recall that hypersurfaces in
$p^{-1}(0)$ that are transversal to $H_p$ are symplectic. In fact, 
a local application of Darboux's theorem (see for instance \cite[\S 21.1]{Hor2})
shows that we can make a symplectic change of variables in which $ p = \xi_n $
and $ H_p = \partial_{x_n} $. If $ \Sigma \subset \{ \xi_n = 0 \} $ is 
transversal to $ \partial_{x_n} $, then $ ( x_1, \cdots x_{n-1}; \xi_1 , \cdots,\xi_{n-1} ) $
can be chosen as coordinates on $ \Sigma $. Since $ \omega \rest _{p^{-1}(0)} 
= \sum_{j=1}^{n-1}d\xi_j \wedge dx_j$, that means that $ \omega \rest_\Sigma $ is
nondegenerate. The local normal form $ p = \xi_n $ will be used further in 
the paper (in its quantized form).

The final assumption guarantees the absence of topological or symplectic peculiarities:
\begin{equation}
\label{eq:H5}
\begin{split}
& \text{There exists a set $\wSi
_k\Subset T^* {\RR}^{n-1}$ with smooth boundary, and a symplectic}
\\
& \text{diffeomorphism  $\kappa _k:
\tSigma_k\to \Sigma_k$ which is smooth up the boundary together}
\\ & \text{with its inverse. We assume that $ \kappa_k $ extends to a 
neighbourhood of $\wSi_k$ in $T^*\RR^n$.}
\end{split}
\end{equation}
In other words, there exist symplectic coordinate charts on $\Sigma_k$, taking values in $\widetilde\Sigma_k$.

\medskip

The following result, due to Bowen and Walters \cite{BowenWalters72},
shows that our assumptions are realized in the case of 1-dimensional hyperbolic
trapped sets.
\begin{prop}
\label{p:bowen}
Suppose that the assumptions of \S\ref{ass} hold, 
and that the flow $ \Phi^t\rest_{K_0} $ is {\it hyperbolic} in the 
standard sense of Eqs.~(\ref{e:hyperb1},\ref{e:hyperb2}). 
Then the existence of $ \bSigma$
satisfying \eqref{eq:H1}-\eqref{eq:H5} is equivalent with $K_0$ being
topologically one dimensional.
\end{prop}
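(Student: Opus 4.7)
My approach would be to unpack both implications of the equivalence through the local product structure of compact hyperbolic invariant sets for smooth flows, appealing to the construction of Markov families in \cite{BowenWalters72}. The starting point is the Stable Manifold Theorem on $K_0$: at each $\rho\in K_0$ there exist strong stable/unstable manifolds $W^{s,u}_{\loc}(\rho)\subset p^{-1}(0)$ transverse to $H_p$, and a neighborhood $U\ni\rho$ in $p^{-1}(0)$ in which
\[
K_0\cap U\;\approx\;\bigl(W^s_{\loc}(\rho)\cap K_0\bigr)\times\bigl(W^u_{\loc}(\rho)\cap K_0\bigr)\times(-\epsilon,\epsilon),
\]
with the last factor realizing the flow direction. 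Additivity of topological dimension on such local products then gives $\dim K_0 = \dim(W^s_{\loc}(\rho)\cap K_0)+\dim(W^u_{\loc}(\rho)\cap K_0)+1$; this identity would drive both directions.

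For the $(\Leftarrow)$ direction, I would assume $\dim K_0=1$, so the above identity forces both $W^{s,u}_{\loc}(\rho)\cap K_0$ to be totally disconnected for every $\rho\in K_0$. Around each such $\rho$ I could then choose a small contractible smooth transversal disk $\Sigma^\rho\subset p^{-1}(0)$ with $\partial\Sigma^\rho\cap K_0=\emptyset$, exploiting that $K_0\cap\Sigma^\rho$ is locally a totally disconnected product. By compactness of $K_0$ finitely many such disks would suffice to cut every flow orbit of $K_0$, and a flow-box shrinking together with small perturbations along $\Phi^t$ would convert them into pairwise disjoint smooth disks $\Sigma_k$ meeting every orbit of $K_0$ within a uniform time $t_{\max}$, giving \eqref{eq:H1}--\eqref{eq:H3}. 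Contractibility plus a local symplectic Darboux chart on $p^{-1}(0)$ would supply $\kappa_k$ and hence \eqref{eq:H5}, and \eqref{eq:H4} would follow from the subdivision trick noted in the paper. This is essentially Bowen--Walters' Markov family construction specialized to the one-dimensional case.

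For the $(\Rightarrow)$ direction, I would assume $\bSigma$ exists. Boundedness of $t_\pm$ and $\partial\bSigma\cap K_0=\emptyset$ let one realize a neighborhood of $K_0$ as the suspension over $\Lambda\defeq\bSigma\cap K_0$ of the first-return map $F$, so near any $\rho\in\Lambda$ transversality of $H_p$ yields $K_0\cap V\simeq(\Lambda\cap U)\times(-\epsilon,\epsilon)$ and hence $\dim K_0 = \dim\Lambda+1$. It would then remain to show $\dim\Lambda=0$. The set $\Lambda$ is $F$-invariant, compact, lies in the open interior of $\bSigma$, and inherits hyperbolicity for $F$ with stable/unstable manifolds $W^{s,u}_F(\rho)=W^{s,u}_{\loc}(\rho)\cap\bSigma$. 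Invoking the Bowen--Walters analysis for expansive discrete systems, any nontrivial connected component of $\Lambda$ would, under forward iteration of $F$ and the uniform expansion estimates \eqref{e:hyperb2}, have to stretch inside $\bSigma$ and eventually exit through $\partial\bSigma$, contradicting $F$-invariance. Hence $\dim\Lambda=0$ and $\dim K_0=1$.

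The hardest step will be the technical construction in the $(\Leftarrow)$ direction: passing from the existence of individual good transversal disks to a finite family that is simultaneously pairwise disjoint, with uniformly transverse boundaries disjoint from $K_0$, and complete in the sense that every orbit of $K_0$ meets the union within bounded time. The combinatorial and flow-box arguments required here are exactly those of \cite{BowenWalters72}, and I do not see a shortcut avoiding them; the remaining items \eqref{eq:H4}--\eqref{eq:H5} are soft and fall out at the end.
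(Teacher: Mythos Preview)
The paper does not actually supply a proof of this proposition: it states the result, attributes it to Bowen and Walters \cite{BowenWalters72}, and then only adds a short remark explaining that the Markov-family construction there yields sections of small diameter (hence \eqref{eq:H5}) and that \eqref{eq:H4} can be arranged by inserting extra components. Your proposal is therefore already considerably more detailed than what the paper offers; in particular, you correctly isolate the $(\Leftarrow)$ direction as the substantive one and correctly defer its technical core (producing a finite, disjoint, complete family of transversals with boundaries avoiding $K_0$) to the Bowen--Walters construction, exactly as the paper does.

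One point worth tightening is your $(\Rightarrow)$ argument. As written, saying that a nontrivial connected component $C\subset\Lambda$ ``stretches and exits through $\partial\bSigma$'' is imprecise, since $\Lambda$ is $F$-invariant and hence $F^n(C)\subset\Lambda\subset\mathrm{int}\,\bSigma$ for all $n$; nothing ever reaches the boundary. The clean version is: if $C$ is nontrivial then by the local product structure it contains a nontrivial arc in some local unstable manifold; this arc remains a connected subset of a single $\Sigma_j$ after each iterate of $F$, and by \eqref{e:hyperb2} its length grows exponentially, contradicting the uniform bound $\mathrm{diam}\,\Sigma_j<\infty$ (the contractibility in \eqref{eq:H5} rules out any wrapping). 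The symmetric argument with backward iterates handles stable arcs, forcing $\dim\Lambda=0$. This is presumably what you intended; making the contradiction a diameter bound rather than a boundary crossing removes the gap.
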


\smallskip
\noindent
{\bf Remark.} Bowen and Walters \cite{BowenWalters72} show more, 
namely the fact that the sets $\{\Sigma_k\}$ can be chosen of
small diameter, and constructed such that $\bSigma\cap K_0$
forms a {\em Markov partition} for the Poincar\'e map.
Small diameters ensures that \eqref{eq:H5} holds, while, as
mentioned before, \eqref{eq:H4} can always be realized by adding some more
components.

\medskip

Proposition~\ref{p:bowen} 
shows that the assumptions of Theorem \ref{t:s}
imply the dynamical assumptions made in this section. 
The proof of \cite[Appendix c]{SjDuke} shows that the following
example of ``three-bumps potential'',
\begin{gather*}
  P = - h^2 \Delta + V ( x ) - 1 \,, \ \ x \in \RR^2 \,, \ \ \ 
V ( x ) = 2\sum_{k=1}^3 \exp ( - R( x - x_k )^2 ) \,, \\ x_k = 
( \cos ( 2\pi k/3 ) , \sin ( 2 \pi k / 3 ) ) \,, 
\end{gather*}
satisfies our assumptions as long as $R>1$ is large enough (see Fig.~\ref{f:pot4}).

\begin{figure}
\begin{center}
\includegraphics[angle=-00,width=.95\textwidth]{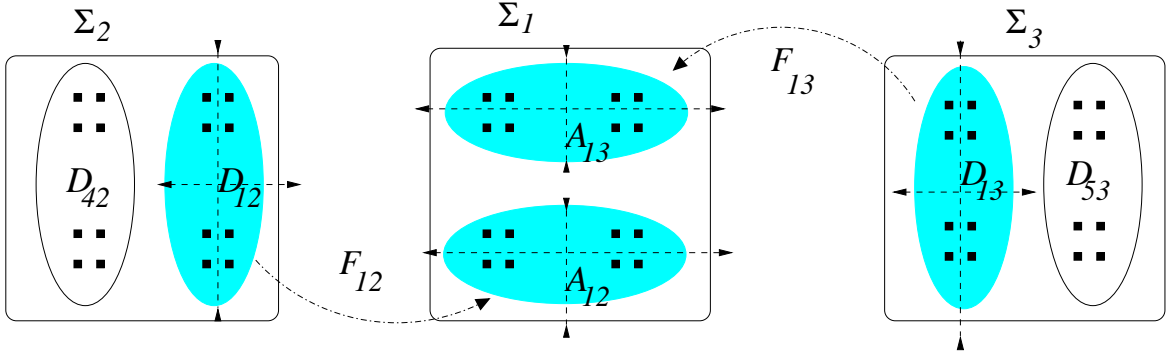}
\caption{\label{fig:A-D} Schematic representation of the components
  $F_{ik}$ of the Poincar\'e map between the 
sets $D_{ik}$ and $A_{ik}$ (horizontal/vertical ellipses). The reduced trapped
set $\TT_i$ is represented by the black squares. The unstable/stable directions of the map
are the horizontal/vertical dashed lines.}
\end{center}
\end{figure}

\subsection{The Poincar\'e map}
Here we will analyze the Poincar\'e map associated with the
Poincar\'e section discussed in 
\S\ref{da}, and its semiclassical quantization. 

\subsubsection{Classical analysis}\label{s:decompo}

The assumptions in \S\ref{da} imply the existence of a {\em symplectic
  relation}, the so-called Poincar\'e map on $\bSigma$.

More precisely, let us identify $ \Sigma_k $'s with $ \wSi_k $
using $ \kappa_k $ given in \eqref{eq:H5}, so that the Poincar\'e section 
\[ \bSigma =
\bigsqcup_{k=1}^N \Sigma_k \simeq \bigsqcup_{k=1}^N \widetilde 
\Sigma_k   \subset \bigsqcup_{k=1}^N T^* \RR^{n-1}\,. \]
Let us call
\[  \TT \defeq K_0 \cap \bSigma=\bigsqcup_k\TT_k \quad\text{the reduced trapped set.}  \]
The map
\[  f \; : \; \TT \longrightarrow \TT \,, \ \ \rho \longmapsto
f(\rho)\defeq\rho_+ ( \rho )  \]
(see the notation of \eqref{eq:H3}) is the Poincar\'e map for
$\Phi^t\rest_{K_0}$. It is a Lipschitz bijection. The decomposition 
$\TT=\bigsqcup_k\TT_k$ allows us to define the {\em arrival} and 
{\em departure} subsets of $\TT$:
\[ \begin{split}
& \cD_{ ik} \defeq \{ \rho \in \TT_k\subset\Sigma_k \; : \; \rho_+( \rho ) \in \TT_i \} = 
\TT_k \cap f ^{-1} (\TT_i )\,,\\
&  \cA_{ik } \defeq \{ \rho \in \TT_i\subset\Sigma_i \; : \; \rho_-( \rho ) \in \TT_k \} 
= \TT_i \cap f ( \TT_k ) =f(\cD_{ik})\,, 
 \end{split} \]
For each $k$ we call $J_+(k)\subset\{1,\ldots,N\}$ the set of indices $i$ 
such that $\cD_{ik}$ is not empty (that is, 
for which $\TT_i$ is a successor of $\TT_k$). Conversely, the set $J_-(i)$ refers to the
predecessors of $\TT_i$.

Using this notation, the map $f$ obviously decomposes into a family of Lipschitz 
bijections $f_{ik}:\cD_{ik}\to \cA_{ik}$. 
Similarly to the maps $\rho_{\pm}$, each $f_{ik}$ can be extended to a neighbourhood of $\cD_{ik}$, to form 
a family of local smooth symplectomorphisms 
$$
F_{ik} \; : \; D_{ ik} \longrightarrow \; F_{ik} ( D_{ik } ) \defeq A_{ik}\,, 
$$
where $D_{ik}$ (resp. $A_{ik}$) is a neighbourhood of $\cD_{ik}$ in $\Sigma_k$ (resp. a neighbourhood
of $\cA_{ik}$ in $\Sigma_i$).
Since our assumption on $K_0$ is equivalent with the fact that the reduced trapped
set $\TT$ is {\em totally disconnected},  we may assume that the sets $\{D_{ik}\}_{i\in J_+(k)}$ 
(resp. the sets $\{A_{ik}\}_{k\in J_-(i)}$) are mutually disjoint. We will call 
$$
D_k \defeq \sqcup_{i\in J_+(k)} D_{ik},\qquad A_i \defeq\sqcup_{k\in J_-(k)}A_{ik}\,.
$$
Notice that, for any index $i$, the sets $D_{i}$, $A_{i}$ both contain
the set $\TT_i$, so they are not disjoint.

We will also define the {\em tubes} $T_{ik}\subset T^*X$ containing the trajectories between $D_{ik}$ and $A_{ik}$:
\begin{equation}\label{e:tubes}
T_{ik}\defeq \{\Phi^t(\rho),\,:\,\rho\in D_{ik},\ 0\leq t\leq t_+(\rho)\}\,.
\end{equation}
See Fig.~\ref{fig:A-D} for a sketch of these definitions, and Fig.~\ref{fig:tube} for an artistic view
of $T_{ik}$
\begin{figure}
\begin{center}
\includegraphics[width=0.75\textwidth]{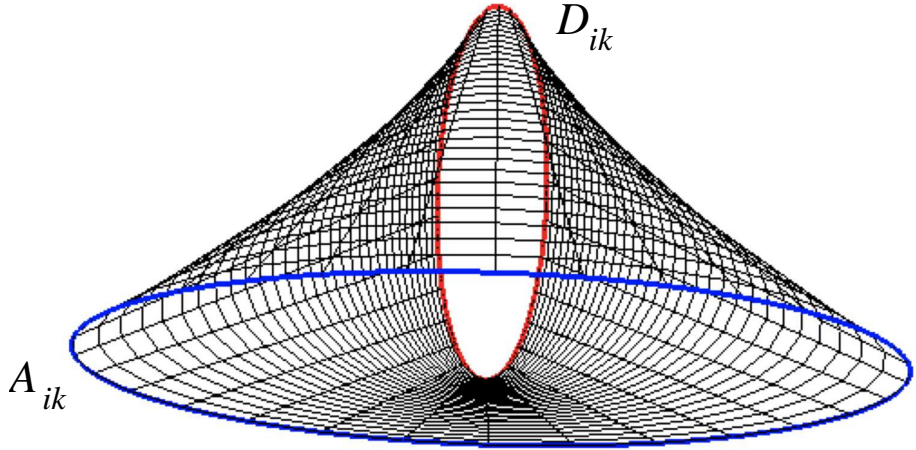}
\caption{\label{fig:tube} Trajectories linking the boundaries of the departure set 
$D_{ik}\subset\Sigma_k$ and the arrival set $A_{ik}\subset \Sigma_i$. Note the stretching and contraction implied by
hyperbolicity. These trajectories and $D_{ik}\cup A_{ik}$ form the boundary of the tube $T_{ik}$ defined by \eqref{e:tubes}.}
\end{center}
\end{figure}

The maps $F_{ik}$ will be grouped into the symplectic bijection $F$ 
between $\bigsqcup_k D_{k}$ and $\bigsqcup_k A_{k}$.
We will also call $F$ the Poincar\'e map, which can be viewed as a
symplectic relation on $\bSigma$. 
We will sometimes identify the map $F_{ik}$ with its action on 
subsets of $ T^* \RR^{n-1} $.
$$\tF_{ik}=\kappa_i^{-1}\circ F_{ik}\circ \kappa_{k}:\widetilde D_{ik} 
\longrightarrow \widetilde A_{ik}  \,, \ \ 
\widetilde D_{ik} \defeq \kappa_k^{-1} ( D_{ ik} ) \,, \ \ 
\widetilde A_{ik} \defeq \kappa_i^{-1} ( A_{ ik} )  \,.
$$

\medskip

Using the continuity of the flow $\Phi^t$, we will show in
\S\ref{s:normal} that the above structure
can be continuously extended to a small energy interval $z\in
[-\delta,\delta]$.
The Poincar\'e map
for the flow in $p^{-1}(z)$ will be denoted by $F_z=(F_{ik,z})_{1\leq i,k\leq N}$ (see \S\ref{s:normal} for details).

In the case of $K_0$ supporting a hyperbolic flow, a structural
stability of $K_z$ holds in a stronger sense: the flows $\Phi^t\rest_{K_z}$ and
$\Phi^t\rest_{K_0}$ are actually orbit-conjugate (that is, conjugate
up to time reparametrization) by a homeomorphism close to the identity.
\cite[Thm. 18.2.3]{KaHa}.

\subsubsection{Quantization of the Poincar\'e map}\label{s:quantiz}
In this section we make more explicit the operator $\cM(z,h)$ used in
Theorem~\ref{t:s}. The semiclassical tools we are using will be recalled in \S\ref{pr}.
 
Let us first focus on a single component $F_{ik}:D_{ik}\to A_{ik}$ 
of the Poincar\'e map. A quantization of the symplectomorphism $F_{ik}$ 
(more precisely, of its pullback $\tF_{ik}$) 
is a semiclassical (or $h$-) Fourier integral operator, that is a
family of operators
$\cM_{ik}(h):L^2(\IR^{n-1})\to L^2(\IR^{n-1})$, $h\in (0,1]$, 
whose semiclassical wavefront set satisfies
\begin{equation}
\label{eq:WFM}
  \WFh' ( \cM_{ik} ) \Subset \tA_{ik} \times \tD_{ik} \,,
\end{equation}
and which is associated with the symplectomorphism $\tF_{ik}$. 
($h$-FIOs are defined in \S\ref{fio}, and $ \WFh' $ is defined in \eqref{eq:wfhp} below).

Being associated to the symplectic map $ \tF_{ik} $ means the following thing:
for any $ a \in \CIc ( \tA_{ik} ) $, the quantum operator $\Op ( a )$
transforms as follows when conjugated by $\cM_{ik}(h)$
\begin{equation}
\label{eq:aM}
\cM_{ik}(h)^* \Op ( a ) \cM_{ik}(h) = \Op ( \alpha_{ik} \tF_{ik}^* a ) + h^{1-2\delta}\, \Op ( b) \,, \ 
\end{equation}
where the symbol $ \alpha_{ik} \in S_\delta(T^*\RR^{n-1}) $ is independent of $ a $, 
$ \alpha_{ik} = 1 $ on some neighbourhood of $\TT_k$ in  $\Sigma_k$, and $ b \in S_\delta(T^*\RR^{n-1}) $,
for every $ \delta > 0 $. Here $ \Op $ 
denotes the semiclassical Weyl quantization on $ \RR^{2(n-1)}$ (see eq.\eqref{eq:weyl}), and 
$ S_\delta(T^*\RR^{n-1})$ is the symbol class defined in \S \ref{sa}.
The necessity to have $ \delta>0 $ in \eqref{eq:aM} comes from the slightly 
exotic nature of our Fourier integral operator, due to the presence
of some mild exponential weights -- see \S\ref{cef} below. 

The property \eqref{eq:aM}, which is a form of {\em Egorov's theorem}, 
characterizes $ \cM_{ik}(h)$ as a semiclassical 
Fourier integral operator associated with $\tF_{ik}$
(see \cite[Lemma 2]{SjZw02} and \cite[\S 10.2]{EZ} for that characterization).

We can then group together the $\cM_{ik}(h)$ into a single operator-valued matrix 
(setting $\cM_{ik}(h)=0$ when $i\not\in J_+(k)$):
\[  \cM(h) \; : \; L^2 ( \RR^{n-1} )^N
\longrightarrow  L^2 ( \RR^{n-1} )^N \,, \quad \cM(h)=\begin{pmatrix}\cM_{ik}(h)\end{pmatrix}_{1\leq i,k\leq N}\,.
\]
We call this $\cM(h)$ a quantization of the Poincar\'e map $F$.

The operators $\cM(z,h)$ in
Theorem~\ref{t:s} will also holomorphically depend on $z\in \DOCh$,
such that for each $z\in \DOCh \cap \IR$ the family $(\cM(z,h))_{h\in (0,1]}$
is an $h$-Fourier integral operator of the above sense.


\medskip

\noindent
{\bf Comment on notation.}
Most of the estimates in this paper include error terms of 
the type $\cO(h^\infty )$, which is natural in all microlocal statements. 
To simplify the notation 
we adopt the following convention (except in places
where it could lead to confusion): 
\begin{gather}
\label{eq:not}
\begin{gathered}
u \equiv v  \ \Longleftrightarrow \  \| u - v \| =\Oo ( h^\infty) 
\| u \| \,, \\
\| S u \| \lesssim  \| T u \| + \|v \| \ \Longleftrightarrow 
\ \| S u \| \leq \Oo ( 1 ) 
 (  \| T u \| + \|v \| ) + \Oo ( h^\infty ) \| u \| \,,
\end{gathered}
\end{gather}
with norms appropriate to context.
Since most estimates involve
functions $ u $ microlocalized to compact sets, in the 
sense that, $ u - \chi ( x , h D ) u \in h^\infty {\mathscr S} ( \RR^n)  $, 
for some $ \chi \in \CIc ( T^* \RR^n) $, the norms are almost 
exclusively $ L^2 $ norms, possibly with microlocal weights described in 
\S \ref{cef}. 

The notation $ u = \cO_V ( f ) $
means that $ \| u \|_{V } = \cO( f ) $, and the notation
$ T = \cO_{V \rightarrow W } ( f ) $ means that
$ \| T u \|_W = \cO (f ) \| u \|_V $.
Also, the notation 
\[ \neigh(A,B)  \ \ \text{ for $ A \subset B $,} \]
means an open neighbourhood of the set $ A $ inside the set $ B $.

Starting with \S \ref{gps}, we denote the Weyl quantization
of a symbol $ a $ by the same letter $ a = a^w ( x, h D ) $.
This makes the notation less cumbersome and should be clear
from the context.

Finally, we warn the reader that 
from \S \ref{gps} onwards the original operator $ P $
is replaced by the complex scaled operator $ P_{\theta , R } $, whose 
construction is recalled in \S \ref{cs}. Because of the
formula \eqref{eq:two_r}, that does not affect the results formulated
in this section.

\section{Preliminaries}
\label{pr}
In this section we present background material and references needed for the
proofs of the theorems.

\subsection{Semiclassical pseudodifferential calculus}
\label{sa}

We start by defining a rather general class of 
symbols (that is, $h$-dependent functions) on the phase space
$T^*\RR^d$. For any $\delta\in [0,1/2]$ and $m,k\in\RR$, let
\[ \begin{split} 
S^{m,k}_\delta ( T^* \RR^d ) =\big\{ & a \in \CI( T^* \RR^d \times (0, 1]  ) :
\forall \, \alpha \in \NN^d \,, \  \beta \in \NN^d\,,    \ \exists\, 
C_{\alpha \beta}>0\,,  \\
& \ 
 |\partial_x ^{ \alpha } \partial _\xi^\beta a ( x, \xi ;h ) | \leq
C_{\alpha \beta}  h^{-k-\delta ( | \alpha| + |\beta |) }
\langle \xi \rangle^{m-|\beta| } \big\}   \,.
\end{split} \]
where
$ \langle \xi \rangle \defeq ( 1 + |\xi|^2 )^{\frac12} $.

Most of the time we will use the class with $ \delta = 0 $
in which case we drop the subscript. When $ m = k =0$,
we simply write $ S ( T^*\RR^d ) $ or $ S $ for the class of symbols. 
In the paper $ d = n $ (the dimension of the physical space) or $ d = n-1 $ 
(half the dimension of the Poincar\'e section),
and occasionally (as in \eqref{eq:WFM}) $ d= 2n-2 $, 
depending on the context.

The quantization map, in its different notational guises, is defined as follows 
\be\label{eq:weyl}
\begin{split}
a^w u & =  \Op (a) u ( x) = a^w(x,hD) u(x) 
\\
& \defeq \frac1{ ( 2 \pi h )^d } 
  \int \int  a \big( \frac{x + y  }{2}  , \xi \big) 
e^{ i \la x -  y, \xi \ra / h } u ( y ) dy d \xi \,, 
\end{split} 
\ee
and we refer to \cite[Chapter 7]{DiSj} for a detailed discussion of 
semiclassical quantization (see also \cite[Appendix]{SjR}), and 
to \cite[Appendix D.2]{EZ} for the semiclassical calculus for
the symbol classes given above.

We denote by
$ \Psi_{\delta}^{m,k} ( \RR^d) $  or $\Psi^{m,k} ( \RR^d )$
the corresponding classes of pseudodifferential operators. The
quantization formula \eqref{eq:weyl} is bijective:
each operator $A\in \Psi_{\delta}^{m,k} ( \RR^d)$ is exactly
represented by a unique (full) symbol $a(x,\xi;h)$. It is useful to 
consider only certain equivalence classes of this full symbol, thus defining a
{\em principal symbol map} -- see \cite[Chapter 8]{EZ}: 
$$
   \sigma_h \; : \; \Psi_\delta^{m,k} ( \RR^d  ) \ \longrightarrow 
  S_\delta^{m,k} ( T^* \RR^d ) / S_\delta^{m-1,k-1+2\delta} ( T^*
  \RR^d ) \,.
$$
The combination $\sigma_h \circ \Op $ is the
natural projection from $S_\delta^{m, k}$ onto $S_\delta^{m, k }  /
S_\delta^{m-1, k-1+2\delta} $. The main property of this principal symbol map is
to ``restore commutativity'':
$$
\sigma_h ( A \circ B ) = \sigma_h ( A )\sigma_h ( B ) \,.
$$
Certain symbols in $S^{m,0}(T^*\RR^d)$ admit an asymptotic expansion in powers of $h$,
\be\label{e:classical-type}
a(x,\xi;h)\sim \sum_{j\geq 0} h^j\,a_j(x,\xi),\qquad a_j\in S^{m-j,0}\
\text{independent of $h$}\,,
\ee
such symbols (or the corresponding
operator) are called {\em classical}, and make up the subclass
$S_{cl}^{m,0}(T^*\RR^d)$ (the corresponding operator class is denoted by $\Psi_{cl}^{m,0}(\RR^d)$).
For any operator $A\in \Psi_{cl}^{m,0}(\RR^d)$, its principal symbol $\sigma_h(A)$ admits as representative
the $h$-independent function $a_0(x,\xi)$, first term in
\eqref{e:classical-type}. The latter is also usually
called the principal symbol of $a$.

In \S\ref{cef} we will introduce a slightly different notion of {\em
  leading symbol}, adapted to a subclass of symbols in $S(T^*\RR)$
larger than $S_{cl}(T^*\RR^d)$.


The semiclassical Sobolev spaces, $ H_h^s ( \RR^d ) $  are defined 
using the semiclassical Fourier transform, $ {\mathcal F}_h $:
\begin{equation}
\label{eq:Sob}
  \| u \|^2_{ H_h^s } \defeq  \int_{\RR^d} 
\langle \xi \rangle^{2s} |{\mathcal F}_h u ( \xi ) |^2 d \xi \,, \ \ 
\ {\mathcal F}_h u ( \xi ) \stackrel{\rm{def}}{=} 
\frac{1}{ ( 2 \pi h )^{d/2} }\int_{\RR^d } u ( x ) e^{ - i \langle x , \xi \rangle/h } d x \,.
\end{equation}
Unless otherwise stated all norms in this paper, $ \| \bullet \| $, 
are $ L^2 $ norms.

We recall that 
the operators in $ \Psi ( \RR^d ) $ are bounded on $ L^2 $ uniformly 
in $ h $, 
and that they can be characterized using commutators by 
Beals's Lemma (see \cite[Chapter 8]{DiSj} and \cite[Lemma 3.5]{SjZw04} for the
$ S_\delta $ case):
\begin{equation}
\label{eq:beals} A \in \Psi_{\delta} ( X  )
\; \Longleftrightarrow \; \left\{ \begin{array}{l} \| \ad_{\ell_N}
\cdots \ad_{ \ell_1  } A \|_{ L^2 \rightarrow L^2 } = \cO ( h^{(1-\delta) N } )
 \\
\text{ for linear functions $ \ell_j ( x, \xi ) $ on $ \RR^d \times \RR^d$,}
\end{array} \right. \end{equation}
where $ \ad_B A = [ B , A ] $.

For a given symbol $ a \in S ( T^* \RR^d ) $ we follow \cite{SjZw02} and 
say that the {\em essential support} is contained in a given compact
set $ K \Subset T^*\RR^d $, 
$$
  {\text{ess-supp}}_h\; a \subset K \Subset T^*\RR^d\,, 
$$
if and only if   
$$
 \forall \, \chi \in S ( T^*\RR^d ) \,, \ \supp \chi \cap
 K  = \emptyset \ \Longrightarrow
 \ \chi \, a \in h^\infty \cS ( T^* \RR^d) \,.
$$
The essential support is then the intersection of all such $ K$'s.

Here $\cS $
denotes the Schwartz space.
For $ A \in \Psi ( \RR^d) $,  $  A = \Op ( a ) $, we call
\begin{equation}
\label{eq:WFA}
    \WFh ( A) =  \text{ess-supp}_h\; a \,.
\end{equation}
the semiclassical wavefront set of $A$. 
(In this paper we are concerned with a purely semiclassical 
theory and will only need to deal with {\em compact} subsets of $ T^* \RR^d $.
Hence, we won't need to define noncompact essential supports).


\subsection{Microlocalization}
\label{mic}

We will also consider spaces of $ L^2 $ functions (strictly speaking, 
of $ h $-dependent families of functions) which are 
{\em microlocally concentrated} in an open set 
$ V  \Subset T^* \RR^d$:
\begin{equation}
\label{eq:HV} 
\begin{split}
 H( V ) \defeq  
\{ & u = (u ( h )\in L^2(\RR^d))_{h\in(0,1]},  \ \ \text{such that} \\
&  \exists \, C_u  > 0\,, \  \forall \, h \in ( 0, 1 ] 
\,, \quad\| u ( h ) \|_{L^2 ( \RR^d) } \leq C_u \,,
\\
& \exists \, \chi \in \CIc ( V) \,, \quad
\chi^w(x,hD_x)\, u(h) =  u(h) + \cO_{\cS}  ( h^\infty )
\} \,. \end{split}
\end{equation}
The semiclassical wave front set of $ u \in H ( V ) $ is defined as:
\be\label{eq:defWF}
\WFh ( u ) =   T^* \RR^d \setminus  \big\{  ( x, \xi )\in T^*\RR^d
\; : \; \exists \, a \in S ( T^* \RR^d ) \,, \  \ a ( x, \xi ) =1
\,, \ \| a^w \, u \|_{L^2} = \cO( h^\infty) \big\}
 \,.
\ee
The condition \eqref{eq:defWF} can be equivalently replaced
with $ a^w\, u = \cO_{\cS} ( h^\infty)$,
since we may always take $ a \in \cS ( T^* \RR^d ) $.
This set obviously satisfies $ \WFh ( u ) \Subset V $. Notice that
the condition does not characterize the individual functions $u(h)$, but the full
sequence as $h\to 0$.

We will say that an $ h$-dependent family of operators $T=(T(h))_{h\in(0,1]}:\cS(\RR^d)\to \cS'(\RR^k)$ 
is {\em semiclassically tempered} if there exists $L\geq 0$ such that 
$$
\|\la x\ra^{-L}T(h) u\|_{H_h^{-L}}\leq C\,h^{-L}\| \la x
\ra^Lu\|_{H_h^{L}}\,,\quad h\in (0,1)\,,
\quad \la x\ra \defeq (1+x^2)^{1/2}\,.
$$
Such a family of operators is {\em microlocally defined} on $ V $ if one only
specifies (or considers) its action on states $u\in H(V)$, modulo $\cO_{\cS'\to \cS}(h^\infty)$.
For instance, $T$ is said to be asymptotically uniformly bounded on $H(V)$ if
\begin{equation}\label{e:asymp-bounded}
\exists \, C_T>0\ \forall \, u\in H(V) \ \exists \, h_{T,u}>0\,, \
\forall \,  h \in ( 0 , h_{T,u})  \,, \quad
\| T ( h ) u(h) \|_{L^2 ( \RR^k ) } \leq C_T\,C_u \,.
\end{equation}
Two tempered operators $T,T'$ are said to be microlocally equivalent on
$V$, iff for any $u\in H(V)$ they satisfy
$\| (T-T')u \|_{L^2(\RR^k)}=\Oo(h^\infty)$; equivalently, for any
$\chi\in \CIc(V)$, $\| (T-T')\chi^w \|_{L^2\to L^2}=\Oo(h^\infty)$.


If there exists an open subset $ W \Subset T^* \RR^k $ and $L\in\IR$ such 
that $T$ maps any $u\in H(V)$ into a state $Tu\in h^{-L}\,H(W)$, then we will 
write
\[ 
T = T ( h ) \; : \; H ( V ) \longrightarrow H( W ) \,,
\]
and we say that $T$ is defined microlocally
in $W\times V$.


For such operators, we may define only the part of the (twisted) wavefront set which is inside $W\times V$:
\begin{equation}
\label{eq:wfhp}
\begin{split} 
\WFh'(T)\cap (W\times V)\defeq (W\times V)\setminus  \{ & (\rho', \rho)  \in  W\times V\; : \;  
\exists \, a \in S( T^* \RR^d ), \  b \in S ( T^* \RR^k ) \,, \\ 
& a ( \rho ) = 1 \,, b ( \rho' ) = 1 \,, \quad
 b^w  \,T\, a^w  = \cO_{L^2\to L^2} ( h^\infty ) \}\,. 
\end{split}
\end{equation}
If $ \WF_h' ( T )\cap (W\times V)  \Subset W \times V $,
there exists a family of tempered operators $\tT(h):L^2\to L^2$, such that $T$ and $\tT$ are
microlocally equivalent on $V$, while
$\tT$ is $ \cO_{ \cS' \rightarrow \cS} ( h^\infty ) $ outside $V$, that is 
$$
\tT\circ a^w=\cO(h^\infty):\cS'(\IR^d)\to\cS(\IR^k)\,,
$$ 
for all $a\in S(T^*\IR^d)$ such that
$\supp a\cap V=\emptyset$. This family, which 
is unique modulo $\cO_{ \cS' \rightarrow \cS}(h^\infty)$, is an
extension of the microlocally defined $T(h)$, see \cite[Chapter 10]{EZ}.

\subsection{Local $h$-Fourier integral operators.}
\label{fio}

We first present a 
a class of globally defined $ h$-Fourier integral operators 
following  \cite{SjZw02} and \cite[Chapter 10]{EZ}. 
This global definition will then be used to define
Fourier integral operators microlocally.

Let $ (A ( t ))_{t\in[-1,1]} $ be a smooth 
family of selfadjoint pseudodifferential
operators, 
$$\forall t\in [-1,1],\quad  A ( t ) = \Op ( a ( t ) ) \,, \ \  a (t)  
\in S_{cl} ( T^* \RR^d; \RR )   \,, $$ 
where the dependence on $ t $ is smooth, and 
$ \WFh ( A ( t )) \subset \Omega \Subset T^* \RR^d $, in the 
sense of \eqref{eq:WFA}. We then define
a family of operators
\begin{gather}
\label{eq:2.2}
\begin{gathered}
  U( t ) \; : \; L^2 ( \RR^d) \rightarrow L^2 ( \RR^d) \,, \\
h D_t U ( t) +  U ( t) A(t) = 0 \,. \ \ U( 0 ) = Id\,. 
\end{gathered}
\end{gather}
An example is given by $ A ( t) = A=a^w $, independent of 
$ t $,  in which case $ U ( t) = \exp ( - i t A / h ) $.

The family $ (U ( t ))_{t\in [-1,1]} $ is an example of a family of 
unitary $h$-Fourier integral operators, 
associated to the family of canonical transformations $ \kappa(t)$ 
generated by the (time-dependent) Hamilton 
vector fields $ H_{a_0(t)} $. Here the real valued function
$ a_0 (t) $ is the principal 
symbol of $ A ( t ) $ (see \eqref{e:classical-type}), and the
canonical transformations $\kappa(t)$ are defined through
\[ \frac{d}{dt} \kappa( t) (\rho) =  ( \kappa ( t ))_* ( H_{a_0 ( t)} 
(\rho )) \,, \ \ \kappa( 0 ) (\rho ) = \rho \,,   \ \ 
\rho \in T^* \RR^d \,.\]

If $ U = U(1) $, say, and the graph of $ \kappa (1) $ is denoted by $C$,
we conform to the usual notation and write
\[  U \in I^0_h ( \RR^d \times \RR^d ; C' ) \,, \quad \text{where}\quad
C' = \{ ( x, \xi;  y , - \eta) \; : \; ( x, \xi) = \kappa ( y , \eta )   \}
\,.  \]
Here the twisted graph $C'$ is a Lagrangian submanifold of $T^*(\RR^d\times\RR^d)$.

In words, $ U $ is a unitary $h$-Fourier integral operator associated to 
the canonical graph $ C$ (or the symplectomorphism $\kappa(1)$ defined by this graph). 
Locally all unitary $h$-Fourier integral operators
associated to canonical graphs are of the form $ U ( 1) $, 
since each local canonical transformation with a fixed point can be 
deformed to the identity, see \cite[Lemma 3.2]{SjZw02}. 
For any $\chi\in S(T^*\RR^d)$,
the operator $U(1)\,\chi^w$,
with $\chi\in S(T^*\RR^d)$ is still a (nonunitary) $h$-Fourier integral operator
associated with $C$. The class formed by these operators, which are
said to
``quantize'' the symplectomorphism $ \kappa = \kappa( 1 )$, depends only on 
$ \kappa $, and not on the deformation path from the identity 
to $ \kappa $. This can be seen from the Egorov characterization of
Fourier integral operators -- see
\cite[Lemma 2]{SjZw02} or \cite[\S 10.2]{EZ}. 

Let us assume that a symplectomorphism $\kappa$ is defined only near the
origin, which is a fixed point. It is always possible to {\em locally} deform
$\kappa$ to the identity, that is construct a family of symplectomorphisms 
$\kappa(t)$ on $T^*\RR^d$, such that $\kappa(1)$ coincides with
$\kappa$ in some neighbourhood $V$ of the origin \cite[Lemma
3.2]{SjZw02}. If we apply the above construction to get the unitary operator
$U(1)$, and use a cutoff $\chi\in S(T^*\RR^d)$,
$\supp\chi\Subset V$, then the operator $U(1)\chi^w$ is an $h$-Fourier
integral operator associated with the local symplectomorphism
$\kappa\rest V$. Furthermore, if there exists a neighbourhood $V'\Subset V$ such that
$\chi\rest V'\equiv 1$, then
$U(1)\chi^w$ is microlocally unitary inside $V'$.

For an open set $ V \Subset \RR^d $ and $ \kappa $
a symplectomorphism defined in a neighbourhood $\tV$ of $ V $, 
we say that a tempered operator $ T $ satisfying
\[ 
T \; : \; H ( \tV ) \longrightarrow H ( \kappa ( \tV ) ) \,, 
\]
is a micrololocally defined {\em unitary}
$h$-Fourier integral operator in $V$, if any point $ \rho \in V $ has 
a neighbourhood $ V_\rho \subset V $ such that 
\[ 
T \; : \; H ( V_\rho ) \longrightarrow H ( \kappa ( V_\rho ) ) 
\]
is equivalent to a unitary $h$-Fourier integral operator associated
with $\kappa\rest V_\rho$, as defined by the
above procedure.
The microlocally defined operators can also be obtained by 
oscillatory integral constructions --- see for instance 
\cite[\S 4.1]{NZ2} for a brief self-contained presentation.

\medskip

An example which will be used in \S \ref{ml} is given 
by the standard conjugation result, see \cite[Proposition 3.5]{SjZw02}
or \cite[Chapter 10]{EZ} for self-contained proofs. 
Suppose that $ P \in \Psi_{cl}^{m,0} ( \RR^d ) $ is a semi-classical
{\em real principal type operator}, namely its principal symbol $ p = \sigma_h ( P ) $ is real,
independent of $ h$, and the Hamilton flow it generates has no fixed
point at energy zero:
$ p = 0 \Longrightarrow dp \neq 0 $.
Then for any $ \rho_0 \in p^{-1}(0) $, 
there exists a canonical transformation, 
$ \kappa $, mapping $ V = \neigh((0,0),T^*\RR^d)$ to $\kappa(V)=\neigh( \rho_0, T^* \RR^d) $, with 
$\kappa( 0 , 0 )=\rho_0 $ and
$$
p\circ\kappa(\rho)=\xi_n(\rho) \quad\rho\in V\,,
$$ 
and a unitary microlocal $ h$-Fourier integral operator
$U : H( V ) \rightarrow H ( \kappa ( V ) )$
associated to $ \kappa $, such that
\begin{gather*}
U^* P U  \equiv hD_{x_n} : H (V ) \rightarrow H ( V ) \,. 
\end{gather*}
While $\xi_n$ is the (classical) normal form for the
Hamiltonian $p$ in $V$, the operator $hD_{x_n}$ is the quantum normal form for $P$,
microlocally in $V$.

The definition of $h$-Fourier integral operators 
can be generalized to graphs $C$ associated with certain {\em relations} 
between phase spaces of possibly different dimensions. 
Namely, if a relation $C\subset T^*\RR^d\times T^*\RR^{k}$
is such that its twist
$$
C'=\{(x,\xi;y,-\eta)\, ;\, \ (x,\xi;y,-\eta')\in C\} 
$$ 
is a Lagrangian submanifold of $ T^*(\RR^d\times \RR^k) $, 
then one can associate with this relation (microlocally in some neighbourhood) 
a family of $h$-Fourier integral operators $T:L^2(\RR^k)\mapsto L^2(\RR^d)$  
\cite[Definition 4.2]{Al}. This class of operators is denoted by $I^r_h ( \RR^d \times \RR^{k} ; C' )$, with 
$r\in\RR$. The important property of these operators is that their composition is still a
Fourier integral operator associated with the composed relations.

\subsection{Complex scaling}
\label{cs}

We briefly
recall the complex scaling method of Aguilar-Combes \cite{AgCo} 
-- see \cite{SjZw91},\cite{Sj}, and references
given there. In most of this section, this scaling is independent of
$h$, and allows to obtain the resonances (in a certain sector) for all operators $P(h)$, 
$ h\in (0,1] $, where
$ P(h) $ satisfies the assumptions of \S\ref{ass}.

For any $ 0 \leq \theta \leq \theta_0$ and $R>0$, we define $ \Gamma_{\theta,R}
\subset \CC^n $ to be a totally real deformation of 
$ \RR^n $, with the following properties:
\begin{gather}
\label{eq:gpr}
\begin{gathered}
\Gamma_\theta \cap B_{\CC^n } ( 0 , R ) = B_{\RR^n } ( 0 , R ) \,, \\
\Gamma_\theta \cap \CC^n \setminus B_{\CC^n } ( 0 , 2 R ) =
e^{ i \theta } \RR^n \cap \CC^n \setminus B_{\CC^n } ( 0 , 2 R ) \,,\\
\Gamma_\theta = \{ x + i f_{\theta, R} ( x) \; : \; x \in \RR^n \} \,,  \  \
\partial_x^{\alpha}  f_{\theta, R} ( x ) =
\Oo_{\alpha} ( \theta )  \,.
\end{gathered}
\end{gather}
If $ R $ is large enough, 
the coefficients of $ P$ continue analytically outside of
$ B ( 0 , R ) $, and we can define a dilated operator:
\[ 
P_{\theta, R} \defeq \widetilde P\rest_{ \Gamma_{\theta, R } } \,,
\ \  P_{\theta, R} u  = \widetilde P ( \tilde u ) \rest_{\Gamma_{\theta, R } } \,,
\]
where $ \widetilde P $ is the holomorphic continuation of the
operator $ P $, and $ \tilde u$ is an almost analytic extension of
$ u \in \CIc ( \Gamma_{\theta, R } ) $ from the totally real submanifold
$ \Gamma_{\theta, R } $ to $ \neigh(\Gamma_{\theta, R }, \CC^n) $.

The operator $ P_{\theta , R } - z $ is a Fredholm operator for 
$ 2\theta >\arg ( z + 1 ) > - 2 \theta $. That means that
the resolvent, 
$ ( P_{\theta, R} - z )^{-1} $, is meromorphic 
in that region,
the spectrum of $ P_{\theta, R} $ in that region
is independent of $ \theta $ and $ R$, and 
consists of the {\em quantum resonances} of $ P$. 

To simplify notations we identify
$ \Gamma_{\theta, R } $ with $ \RR^n $ using the map, $ S_{\theta, R } 
 : \Gamma_{\theta, R } \rightarrow \RR^n $, 
\be\label{eq:idt}
\Gamma_{\theta, R }  \ni x \longmapsto \Re x \in \RR^n \,,
\ee
and using this identification, consider $ P_{\theta, R} $ as an operator on $ \RR^n $,
defined by $ (S_{\theta, R}^{-1})^* P_{\theta, R} S_{\theta, R}^* $ (here $S^*$ means the pullback through $S$)
We note that this identificaton satisfies
\[ 
C^{-1}\,\| u  \|_{ L^2 ( \RR^n) } 
\leq \| S_{\theta, R }^* u  \|_{ L^2 ( \Gamma_{\theta, R} ) } \leq C\, \| u  \|_{L^2 (\RR^n ) }\,,
\]
with $ C $ independent of $ \theta $ if $ 0 \leq \theta \leq \theta_0$.

The identification of the eigenvalues of $ P_{\theta, R} $ with the 
poles of the meromorphic continuation of
$$
( P - z )^{-1} \; : \; \CIc ( \RR^n ) \; \longrightarrow \; \CI ( \RR^n )
$$
from $ \{\Im z>0\}$ to  $ D(0,\sin(2\theta)) $, and in fact, the 
existence of such a continuation, follows from the following 
formula (implicit in \cite{Sj}, and discussed in \cite{TaZw}):
if $ \chi \in \CIc ( \RR^n ) $, $ \supp \chi \Subset B ( 0 , R ) $, 
then 
\begin{equation}
\label{eq:two_r}
  \chi ( P_{\theta, R } - z )^{-1} \chi = \chi ( P - z)^{-1} \chi \,.
\end{equation}
This is initially valid for $ \Im z > 0 $ so that the right hand side
is well defined, and then by analytic continuation in the region 
where the left hand side is meromorphic.
\begin{figure}
\begin{center}
\includegraphics[width=0.6\textwidth]{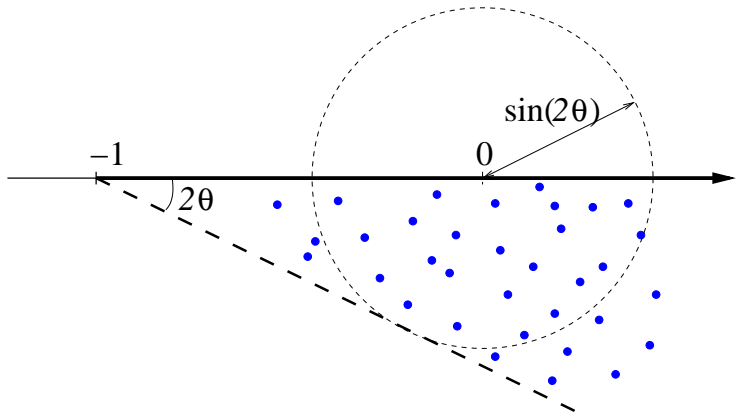}
\caption{The complex scaling in the $ z$-plane used in this paper.}
\label{f:com}
\end{center}
\end{figure}
The reason for the Fredholm property of $ (P_{\theta, R } - z )$ in $D(0,\sin(2\theta))$ comes from the properties of 
the principal symbol of $ P_{\theta, R} $ -- see Fig.~\ref{f:com}. 
Here for convenience,
and for applications to our setting, 
we consider $P_{\theta, R }$ as an operator 
on $ L^2 ( \RR^n )$ using the identification $S_{\theta,R}$ above. 
Its principal symbol is given by 
\begin{equation}
\label{eq:pth}
p_{\theta, R} ( x, \xi ) = p ( x + i f_{\theta, R} ( x) , [( 1 + i df_{\theta, R} (x))^t]
^{-1}
\xi ) \,, \quad (x,\xi)\in T^*\RR^d\,,
\end{equation}
where the complex arguments are allowed due to the analyticity of 
$ p ( x, \xi ) $ outside of a compact set --- see \S \ref{ass}.
We have the following properties
\begin{equation}
\label{eq:pth1}
\begin{split}
& \Re p_{\theta, R} ( x , \xi) = p ( x, \xi ) + {\mathcal O} ( \theta^2) 
\langle \xi \rangle^2 
 \,, \\ 
& \Im p_{\theta, R} ( x, \xi ) = -  d_\xi p ( x, \xi ) [ df_{\theta, R} ( x )^t \xi ]
+ d_x p (x , \xi ) [ f_{\theta, R} ( x ) ] + \Oo ( \theta^2) 
\langle \xi \rangle^2  \,.
\end{split}
\end{equation}
This implies, for $R$ large enough,
\be\label{eq:pth3}
| p (x,\xi) | \leq \delta \,, \ \ 
|x | \geq 2\,R \ \Longrightarrow \ \Im p_{\theta, R} ( x, \xi) \leq - C \theta \,.
\ee

For our future aims, it will prove convenient to actually let the
angle $ \theta $ explicitly depend on $ h $: as long as
$ \theta > c h \log ( 1/ h ) $, the estimates above guarantee 
the Fredholm property of $(P_{\theta, R } - z )$ for 
$ z \in D ( 0 , \theta / C ) $, by providing approximate inverses
near infinity. We will indeed take $\theta$ of the order of
$h\log(1/h)$, see \eqref{e:theta}.

\subsection{Microlocally deformed spaces}
\label{cef}
Microlocal deformations using exponential weights have played an important 
role in the theory of resonances since \cite{HeSj}. Here we take an 
intermediate point of view \cite{Ma,SjZw04} 
by combining compactly supported weights with the complex scaling
described above. 
We should stress however that the full power of \cite{HeSj} would allow
more general behaviours of $p(x,\xi)$ at infinity, for instance potentials
growing in some directions at infinity.

Let us consider an $h$-independent real valued
function $G_0 \in \CIc ( T^* \RR^d; \RR )$, and rescale it in an
$h$-dependent way:
\begin{equation}
\label{eq:G0}  G ( x , \xi ) = M h \log (1/h ) G_0 ( x , \xi )
\,,\quad M>0\ \text{fixed}.
\end{equation}
For $ A \in \Psi^{m,0} ( \RR^d ) $, we consider the conjugated operator
\begin{equation}
\label{eq:exGe}
\begin{split} 
e^{-G^w ( x, h  D) / h } A e^{ G^w ( x, h D )/h } & = e^{-\ad_{G^w ( x, h D) }/h } 
A \\
& = \sum_{\ell=0}^{L-1} \frac{(-1)^\ell} { \ell!} \left( 
\frac 1 h  \ad_{G^w ( x, h D ) }\right) ^\ell A  + R_L \,,
\end{split}
\end{equation}
where
\[ R_L = 
\frac{ (-1)^L } { L!} \int_0^1 e^{ - t G^w ( x, h D ) } \left( \frac 1 h
\ad_{ G^w ( x, h D ) } \right) ^{L} A e^{t G^w ( x , h D ) } dt \,. \]
The semiclassical calculus of pseudodifferential operators
\cite[Chapter 7]{DiSj},\cite[Chapter 4, Appendix D.2]{EZ} 
and \eqref{eq:G0} show that 
\[  \left( 
\frac 1 h  \ad_{G^w ( x, h D ) }\right) ^\ell A = 
( M\log(1/h))^\ell \, (\ad_{G_0^w ( x, h D ) }) ^\ell A
\in (M h \log(1/h))^\ell \,
\Psi_{h}^{-\infty, 0} ( \RR^d ) \,, \ \ \forall\ell > 0 \,.   \]
Since $\|G_0^w\|_{L^2\to L^2}\leq C_0$, functional calculus of 
bounded self-adjoint operators shows that
$$ \| \exp ( \pm t G^w ( x, h D ) ) \| \leq h^{-tC_0M} \,,
$$  so 
we obtain the bound, 
\[ R_L =  \cO_{L^2\rightarrow L^2}  ( \log(1/h)^L\,h^{L-2tC_0M} ) =
\cO_{L^2\rightarrow L^2}  ( h^{L-2tC_0M-L\delta} ) \,,
\]
with $\delta>0$ arbitrary small. 
Applying this bound,
we may write \eqref{eq:exGe} as 
\begin{equation}
\label{eq:exGex} 
e^{-G^w ( x, h  D) / h } A e^{ G^w ( x, h D ) /h } \sim 
 \sum_{\ell=0}^\infty  \frac{(-1)^\ell} { \ell!} \left( 
\frac 1 h  \ad_{G^w ( x, h D ) }\right) ^\ell A \in \Psi^{m,0}(\IR^d)\,.
\end{equation}
In turn, this expansion, combined with Beals's characterization of
pseudodifferential operators \eqref{eq:beals}, implies that the
exponentiated weight is a pseudodifferential operator:
\begin{equation}
\label{eq:exG}
  \exp ( G^w ( x, h D ) / h ) \in \Psi^{0,C_0 M}_{\delta} ( \RR^d ) \,, \ \ \forall 
\delta > 0 \,. 
\end{equation}
Using the weight function $G$, we can now define our weighted spaces.
Let $ H_h^k ( \RR^d ) $ be the semiclassical 
Sobolev spaces defined in \eqref{eq:Sob}. We put
\be\label{e:deformed-norms}  
H_G^k ( \RR^d ) = e^{ G^w ( x, h D )/h } H_h^k ( \RR^d ) \,, \ \ 
\| u \|_{H_G^k } \defeq \| e^{- G^w ( x, h D )/h } u \|_{ H^k_h } \,, 
\ee
and 
\[ \langle  u , v \rangle_{H_G^k } = \langle e^{- G^w ( x, h D )/h } u , 
e^{- G^w ( x, h D )/h } v \rangle_{H^k_h }\,. \]
As a vector space, $H_G^k ( \RR^d )$ is identical with $H_h^k ( \RR^d )$, but the
Hilbert norms are different. In the case of $ L^2 $, that is of $ k = 0 $, we simply 
put $ H_G^0 = H_G $. 


The mapping properties of
$ P = p^w ( x, h D )  $ on $ H_G ( \RR^d ) $ are equivalent with those
of $P_G\defeq e^{-G^w/h}P\,e^{G^w/h}$ on $L^2(\RR^d)$, which are 
governed by the
properties of the (full) symbol $p_G$ of $P_G$: formula \eqref{eq:exGex} shows that
\begin{equation}
\label{eq:pG}
 p_G = p - i H_p G + \cO ( h^2 \log^2 ( 1/h ) ) \,.
\end{equation}

\medskip

At this moment it is convenient to introduce a notion of
{\em leading symbol}, which is adapted to the study of conjugated operators such as $P_G$.
For a given $ Q \in S ( T^* \RR^d ) $, we say that $ q \in S ( T^* \RR^{d} )$ is a leading 
symbol of $ Q^w ( x, h D ) $, if
\begin{equation}
\label{eq:leads}
\forall \gamma \in (0,1) \,, \forall\alpha,\,\beta\in \NN^{d}\,,  
\quad h^{-\gamma } \partial_x^{\alpha }\partial^\beta_\xi (Q-q) = \cO_{\alpha,\beta} ( \la \xi\ra^{-|\beta|} ) \,, 
\end{equation}
that is, $ ( Q - q ) \in S^{0,-\gamma}( T^* \RR^d ) $ for any $\gamma \in ( 0 , 1 ) $. This property
is obviously an equivalence relation inside $S ( T^* \RR^{d} )$, which is weaker than the
equivalence relation defining the principal symbol map on
$\Psi_h$ (see \S\ref{sa}). 
In particular, terms of the size $h\log(1/h)$ are ``invisible'' to the
leading symbol.
For example, the leading symbols of $p_G$ and $p$ are the same.
If we can find $ q $ independent of $h$, then it is unique.

For future use we record the following:
\begin{lem}
\label{l:saG}
Suppose 
\[ Q^w ( x , h D ) \; : \; H_G ( \RR^d ) \longrightarrow H_G ( \RR^d ) \,,
\ \ Q \in S ( T^* \RR^d ) \,, \]
is self-adjoint (with respect to the Hilbert norm on $H_G$). 
Then this operator admits a {\it real} leading symbol. 
Conversely, if $ q \in S ( T^* \RR^d ) $ is real, then 
there exists $ Q \in S ( T^* \RR^d ) $ with leading symbol
$ q $, such that $ Q^w ( x, h D) $ is self-adjoint on $ H_G ( \RR^d ) $.
\end{lem}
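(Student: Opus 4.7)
The plan is to translate self-adjointness on $H_G$ into an identity between Weyl symbols on $L^2$, and then read off the two directions from the asymptotic expansion \eqref{eq:exGex}. The starting observation is that, since $G$ is real, $G^w(x,hD)$ is self-adjoint on $L^2$, hence $e^{\pm G^w/h}$ is self-adjoint as well. A direct computation then gives the formula for the $H_G$-adjoint,
\[
(Q^w)^{*_G} \;=\; e^{G^w/h}\,(Q^w)^*\,e^{-G^w/h},
\]
so $Q^w$ is self-adjoint on $H_G$ if and only if $(Q^w)^* = e^{-G^w/h}Q^w e^{G^w/h}$. Taking Weyl symbols on both sides, and noting that $(Q^w)^*$ has full Weyl symbol $\overline{Q}$, the self-adjointness condition becomes
\[
\overline{Q} \;=\; \sigma^w\bigl(e^{-G^w/h}Q^w e^{G^w/h}\bigr).
\]

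For the forward direction, I would apply the expansion \eqref{eq:exGex} to $Q^w$ exactly as \eqref{eq:pG} was obtained for $p^w$. Since each iterated commutator $(\ad_{G^w}/h)^\ell Q^w$ lies in $(Mh\log(1/h))^\ell\,\Psi^{-\infty,0}$ for $\ell\ge 1$, one obtains
\[
\sigma^w\bigl(e^{-G^w/h}Q^w e^{G^w/h}\bigr) \;=\; Q \;-\; i H_Q G \;+\; \cO\bigl((h\log(1/h))^2\bigr) \quad\text{in } S(T^*\RR^d).
\]
Writing $Q=Q_{\mathrm{re}}+iQ_{\mathrm{im}}$ and comparing real and imaginary parts of $\overline{Q}=Q-iH_Q G+\cdots$ yields
$2\,Q_{\mathrm{im}} = H_{Q_{\mathrm{re}}}G + \cO((h\log(1/h))^2)$,
and in particular $Q_{\mathrm{im}} = \cO(h\log(1/h))$ together with all its derivatives. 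Since $h\log(1/h) = \cO(h^\gamma)$ for every $\gamma\in(0,1)$, this forces $Q-Q_{\mathrm{re}}=iQ_{\mathrm{im}} \in S^{0,-\gamma}(T^*\RR^d)$ for every $\gamma\in(0,1)$, which is exactly the statement that $q\defeq Q_{\mathrm{re}}$ is a real leading symbol of $Q^w$.

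For the converse, given a real $q\in S(T^*\RR^d)$, I would simply take the $H_G$-symmetrization
\[
Q^w \;\defeq\; \tfrac12\bigl(q^w + (q^w)^{*_G}\bigr) \;=\; \tfrac12\bigl(q^w + e^{G^w/h}q^w e^{-G^w/h}\bigr),
\]
which is manifestly self-adjoint on $H_G$. The analogous expansion (now with the opposite sign) gives the full Weyl symbol
\[
\sigma^w(Q^w) \;=\; q \;+\; \tfrac{i}{2}H_q G \;+\; \cO\bigl((h\log(1/h))^2\bigr),
\]
all correction terms lying in $S^{-\infty,0}$ and of size $\cO(h\log(1/h))$. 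Since $h\log(1/h)\in S^{0,-\gamma}$ for all $\gamma\in(0,1)$, the full symbol $Q$ lies in $S(T^*\RR^d)$ and has $q$ as leading symbol, as required.

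The only delicate point, and the one I would be most careful with, is ensuring that the leading-symbol notion \eqref{eq:leads} is properly coordinated with the weights $G=Mh\log(1/h)\,G_0$: the whole argument works because a single power of $h\log(1/h)$ is already absorbed by the leading-symbol equivalence (it is $\cO(h^\gamma)$ for every $\gamma<1$), so the nontrivial part of the conjugation by $e^{\pm G^w/h}$ is invisible to $q$ while still producing a genuine self-adjoint operator on $H_G$. Once this bookkeeping is checked, both directions follow immediately from the above computations.
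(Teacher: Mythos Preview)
Your approach is essentially the same as the paper's --- translate self-adjointness on $H_G$ to a condition on $L^2$ via conjugation by $e^{\pm G^w/h}$, and use the expansion \eqref{eq:exGex} to see that the correction terms are of size $\cO(h\log(1/h))$, hence invisible to the leading symbol. The argument is correct, with one computational slip: the $H_G$-adjoint is
\[
(Q^w)^{*_G} \;=\; e^{2G^w/h}\,(Q^w)^*\,e^{-2G^w/h},
\]
not $e^{G^w/h}(Q^w)^*e^{-G^w/h}$. Indeed, since $e^{-G^w/h}$ is self-adjoint on $L^2$, one has $\langle Q^w u,v\rangle_{H_G}=\langle u,(Q^w)^*e^{-2G^w/h}v\rangle_{L^2}$ and $\langle u,(Q^w)^{*_G}v\rangle_{H_G}=\langle u,e^{-2G^w/h}(Q^w)^{*_G}v\rangle_{L^2}$. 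This only changes the constant in front of $H_QG$ and leaves your conclusion intact.

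The paper packages the same computation more economically: rather than computing the $H_G$-adjoint and expanding, it simply observes that $u\mapsto e^{-G^w/h}u$ is a unitary from $H_G$ onto $L^2$, so $Q^w$ is self-adjoint on $H_G$ if and only if $Q_G^w\defeq e^{-G^w/h}Q^we^{G^w/h}$ is self-adjoint on $L^2$, i.e.\ if and only if the full Weyl symbol $Q_G$ is real. Since $Q$ and $Q_G$ share the same leading symbol by \eqref{eq:exGex}, the forward direction is immediate; for the converse one just takes $Q^w\defeq e^{G^w/h}q^we^{-G^w/h}$. Your symmetrization $\tfrac12(q^w+(q^w)^{*_G})$ works equally well and is perhaps more natural if one hasn't noticed the unitary equivalence, but the paper's route avoids the explicit expansion entirely.
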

\begin{proof}
This follows from noting that 
$$ Q^w_G \defeq e^{-G^w/h}Q^w ( x , h D ) e^{G^w/h} \,, $$
has the same leading symbol as $ Q^w ( x , h D) $, and that 
self-adjointness of $ Q^w $ on $ H_G $ is equivalent to 
self-adjointness of $ Q_G^w $ on $ L^2 $: the definition of $H_G $ in 
\eqref{e:deformed-norms}
(the case of $ k = 0 $) gives
\[ \langle Q^w u , v \rangle_{H_G} = \langle 
e^{ - G^w /h }  Q^w u , e^{- G^w  / h } v \rangle_{L^2} 
=  \langle 
Q_G^w ( e^{ -G^w /h }u ), e^{- G^w / h } v \rangle_{L^2} \,. \]
\end{proof}

The weighted spaces can also be microlocalized in the 
sense of \S\ref{mic}: for 
$V   \Subset T^* \RR^d$, we define the space
\begin{equation}\label{eq:HVG}
\begin{split} 
 H_G( V ) \defeq  
\{ u = u ( h )\in H_G(\RR^d),  \; : \; &\exists  C_u  > 0 \,,\ \forall
h\in (0,1],\ \| u ( h ) \|_{H_G ( \RR^d) } \leq C_u\\
& \exists \, \chi \in \CIc (V) 
\,, \quad 
\chi^w u =  u +\cO_{\cS}  ( h^\infty )
 \} \,. 
\end{split}
\end{equation}
In other words, $H_G ( V ) = e^{G^w ( x, h D) / h } H( V )$.
This definition depends only on the values of the weight $ G$ in the
open set $ V $.

For future reference we state the following 
\begin{lem}
\label{l:TGV}
Suppose $ T : H ( V ) \rightarrow H ( \kappa ( V ) ) $ is an 
$ h $-Fourier integral operator associated to a symplectomorphism
$ \kappa $ (in the sense of \S \ref{fio}), and is  asymptotically
uniformly bounded (in the sense of \eqref{e:asymp-bounded}). 
Take
$ G_0 \in \CIc ( \neigh(\kappa( V ))) $, $G = M h \log (1/h) G_0$.\\ Then the operator
\begin{equation}
\label{eq:Tbd}
T \; : \; H_{\kappa^*G } ( V ) \to H_G ( \kappa ( V ) ) 
\end{equation}
is also asymptotically uniformly bounded with respect to the deformed norms.
\end{lem}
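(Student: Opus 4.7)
The plan is to reduce the claim to the $L^2$-boundedness of a conjugated operator, and then invoke Egorov's theorem for the $h$-FIO $T$. Given $u\in H_{\kappa^*G}(V)$, set $v\defeq e^{-(\kappa^*G)^w/h}u\in H(V)$, so that $\|v\|_{L^2}=\|u\|_{H_{\kappa^*G}}$ by the definitions \eqref{e:deformed-norms} and \eqref{eq:HVG}. Then
\[
\|Tu\|_{H_G(\kappa(V))} = \|e^{-G^w/h}Tu\|_{L^2} = \|Bv\|_{L^2},\qquad B\defeq e^{-G^w/h}\,T\,e^{(\kappa^*G)^w/h},
\]
and it suffices to show that $B : H(V)\to L^2(\kappa(V))$ is asymptotically uniformly bounded in $h$.

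To control $B$, write $B=T\cdot Q$ with
\[
Q\defeq \bigl(T^{-1}e^{-G^w/h}T\bigr)\cdot e^{(\kappa^*G)^w/h},
\]
where $T^{-1}$ denotes a microlocal inverse of $T$ near $\kappa(V)$; such an inverse exists since $T$ is a local $h$-FIO, see \S\ref{fio}. By \eqref{eq:exG}, both $e^{-G^w/h}$ and $e^{(\kappa^*G)^w/h}$ belong to $\Psi_\delta^{0,C_0M}$, with (generalized) leading symbols $e^{-G/h}$ and $e^{\kappa^*G/h}$ respectively. Egorov's theorem for the FIO $T$ then yields
\[
T^{-1}\,e^{-G^w/h}\,T \in \Psi_\delta^{0,C_0M}\quad\text{with leading symbol}\quad\kappa^*\bigl(e^{-G/h}\bigr)=e^{-\kappa^*G/h}.
\]
The two factors of $Q$ thus have mutually reciprocal leading symbols, so the symbolic composition yields $Q=I+R_h$ with $R_h=\cO_{L^2\to L^2}(h^\gamma)$ for some $\gamma>0$. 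Combined with the asymptotic boundedness of $T$ on $H(V)$, this gives $B=T(I+R_h)=\cO_{H(V)\to L^2}(1)$, which is the required estimate.

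The delicate point, and the main obstacle, is the justification of the Egorov step for the ``large'' operator $e^{-G^w/h}$, whose $L^2$-operator norm may be as large as $h^{-C_0M}$ rather than $\cO(1)$. Rather than applying Egorov wholesale, one uses the convergent expansion \eqref{eq:exGex} to write $e^{-G^w/h}$ as a sum of operators in $\Psi^{-\infty,0}$, applies Egorov to each term, and resums. Each application of Egorov gains a factor $h^{1-2\delta}$, while each successive term in \eqref{eq:exGex} is already smaller by a factor $M h\log(1/h)$; the combined gain allows the resummed series to converge in $\Psi_\delta^{0,C_0M}$ and to identify its leading symbol as $e^{-\kappa^*G/h}$, by exactly the same mechanism that was used in the paper to deduce \eqref{eq:exG} from \eqref{eq:exGe}. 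The same term-by-term control handles the subleading terms in the composition defining $Q$, producing the bound $R_h=\cO(h^\gamma)$.
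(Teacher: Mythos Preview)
Your overall strategy---reduce to the $L^2$-boundedness of $e^{-G^w/h}\,T\,e^{(\kappa^*G)^w/h}$ and then show the conjugated exponential is close to the inverse of the other one---is exactly the paper's line of attack. But the execution has a real gap.

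The symbolic argument in your first paragraph does not close. The operators $e^{\pm G^w/h}$ live in $\Psi_\delta^{0,C_0M}$, and in that class the product formula gives a remainder in $S_\delta^{0,2C_0M-1+2\delta}$, which is \emph{not} $\cO_{L^2\to L^2}(h^\gamma)$ for large $M$. Cancellation of leading symbols is not enough: when both factors carry a prefactor of size $h^{-C_0M}$, the subleading terms in the composition are still large. Your second paragraph tries to repair this, but the reference to \eqref{eq:exGex} is misplaced: that formula expands a \emph{conjugation} $e^{-G^w/h}Ae^{G^w/h}$, not $e^{-G^w/h}$ itself, so it does not give the decomposition you invoke.

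The paper sidesteps this difficulty by a small but decisive maneuver: write $T\equiv T_0A$ microlocally with $T_0$ unitary and $A\in\Psi_h$. The $A$-factor is handled by \eqref{eq:exGex} (its intended use), and for the unitary factor one has the \emph{exact} functional-calculus identity
\[
T_0^{-1}\,e^{-G^w/h}\,T_0 = e^{-M\log(1/h)\,(T_0^{-1}G_0^wT_0)}\,.
\]
Now Egorov is applied only to the bounded operator $G_0^w$, giving $T_0^{-1}G_0^wT_0=G_\kappa^w$ with $G_\kappa-\kappa^*G_0\in h\,\Psi^{-\infty,0}$. The exponents in $e^{-M\log(1/h)G_\kappa^w}\,e^{M\log(1/h)(\kappa^*G_0)^w}$ then differ by an honest $\cO(h)$ operator, and Baker--Campbell--Hausdorff yields $\Id+\cO_{L^2\to L^2}(h\log(1/h))$. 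The point is that Egorov acts on the \emph{exponent}, not on the exponential, so one never composes two operators of size $h^{-C_0M}$ at the symbolic level. Your term-by-term resummation idea can be made to work, but it amounts to reproving this identity by Taylor expanding the exponential---more laborious and, as written, not yet rigorous.
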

\begin{proof}
Since the statement
is microlocal we can assume that $ V $ is small enough, so that
$ T \equiv T_0 A $ in $V$, where $ T_0 $ is unitary on $ L^2 ( \RR^d ) $ and 
$ A \in \Psi_h $. As in the proof of Lemma \ref{l:saG} the boundedness
of \eqref{eq:Tbd} is equivalent to considering the boundedness of 
\[ e^{-G^w ( x ,h D ) } T_0 e^{ ( \kappa^* G)^w ( x , h D ) /h } 
A_{ \kappa^* G } \; : \; L^2 ( \RR^d )  \rightarrow L^2 ( \RR^d ) \,, \]
where 
\[ A_{\kappa^* G } \defeq  e^{ - ( \kappa^* G)^w ( x , h D ) /h } A 
 e^{ ( \kappa^* G)^w ( x , h D ) /h } \,. \]
Because of \eqref{eq:exGex}, we have uniform boundedness of 
$  A_{\kappa^* G } $ on $ L^2 $. Unitarity of $ T_0 $ means that it is 
sufficient to show the uniform boundedness of 
\[\begin{split} T_0^{-1}  e^{-G^w ( x, h D) / h }  T_0 
e^{ ( \kappa^* G)^w ( x, h D) /h }
& = e^{- M\log(1/h) (T_0^{-1}  G_0^w ( x, h D)   T_0 ) }  
e^{ M\log(1/h)( \kappa^* G_0)^w ( x, h D) }
\end{split} \]
on $ L^2 $. 
Egorov's theorem (see \cite[\S 10.2]{EZ}) shows that
\[ T_0^{-1}  G_0^w ( x, h D)   T_0 =  G_\kappa ( x, h D ) \,, \quad
G_\kappa - \kappa^* G_0 \in \Psi^{-\infty,-1}_h(\RR^d)\,.\]  
Since $ [ G_\kappa^w , \kappa^* G_0^2 ] = h^2 B $, $ B \in \Psi^{-\infty,0}_h(\RR^d) $, 
the Baker-Campbell-Hausdorff formula for bounded operators 
shows\footnote{Alternatively, we can compare $ \exp ( M \log ( 1/h) G_\kappa^w ) $ 
with $ \left(\exp ( M \log ( 1/h ) G_\kappa ) \right)^w $ and use product
formul{\ae} for pseudodifferential operators -- see \cite[Appendix]{SjZw04}
or \cite[Section 8.2]{EZ}.}
that 
\[
\begin{split} T_0^{-1}  e^{-G^w ( x, h D) / h }  T_0 
e^{ ( \kappa^* G)^w ( x, h D) /h }
&= e^{-  M\log(1/h) G_\kappa^w ( x, h D ) } \,e^{ M\log(1/h)( \kappa^* G_0)^w ( x, h D) } \\
& = e^{  M\log(1/h) ( - G_\kappa^w ( x, h D ) +\kappa^* G_0)^w ( x, h D) ) 
+ \Oo_{L^2 \rightarrow L^2} ( \log ( 1/ h )^2 h^2 ) } \\
& = \exp{  \Oo_{L^2 \rightarrow L^2 }  ( h \log ( 1/h ) ) } \\
& = 
\Id + \Oo_{L^2 \rightarrow L^2 } ( h \log ( 1/h ) ) \,. 
\end{split} \]
This proves uniform bounded of globally defined operators $ T_0 A $, and 
the asymptotic uniformly boundedness 
in the sense of \eqref{e:asymp-bounded} of $ T $ on spaces of 
microlocally localized functions.
\end{proof}

\subsection{Escape function away from the trapped set}
\label{eft}

In this section we recall the construction of the specific weight function $G$ which,
up to some further small modifications, will be used to prove Theorems~\ref{t:s} and \ref{t:mg}. 

Let $ K_E \subset p^{-1} ( E ) $ be the trapped
set on the $ E $-energy surface, see \eqref{eq:K0}, and define
\begin{equation}
\label{eq:Khd} \whK  =\whK_\delta \defeq \bigcup_{ | E | \leq \delta } 
K_E \,. 
\end{equation}
The construction of the weight function is based on the following 
result of \cite[Appendix]{GeSj}:
for any open neighbourhoods $ U, V $
of $ \whK$, $  \overline U \subset V $, there exists 
$ G_1 \in \CI ( T^* X ) $, such that
\begin{equation}
\label{eq:gsa0}
G_1\rest_U\equiv 0\,, \  \ H_p G_1 \geq 0 \,, \ \
H_p G_1 \rest_{ p^{-1}( [-2\delta, 2 \delta ])   } \leq C \,,\ \ 
H_p G_1 \rest_{  p^{-1} ([- \delta, \delta ]  )\setminus V } \geq 1 \,.
\end{equation}
These properties mean that $ G_1 $ is an {\em escape function}: it increases along the flow,
and {\em strictly increases} along the flow 
on $ p^{-1}( [-\delta, \delta ] ) $ away from $ \whK$
(as specified by the neighbourhood $ V $). Furthermore, $ H_p G $ is bounded in 
a neighbourhood of $ p^{-1} ( 0 ) $. 

Since such a function $G_1$ is necessarily of unbounded support, we need to 
modify it to be able to use $ H_G$-norms defined in \S \ref{cef}
(otherwise methods of \cite{HeSj} could be used and that alternative
would allow more general behaviours
at infinity, for instance a wide class of polynomial potentials).  
For that we follow \cite[\S\S 4.1,4.2,7.3]{SjZw04} and \cite[\S 6.1]{NZ2}:
$ G_1 $ is modified to a compactly supported $ G_2 $ 
in a way allowing complex scaling estimates \eqref{eq:pth3} to 
compensate for the wrong sign of $ H_p G_2 $.
Specifically, \cite[Lemma 6.1]{NZ2} states that
for any large $ R>0 $ and $ \delta_0 \in ( 0,1/2) $ we can construct $ G_2 $ with the following
properties:
$  G_2  \in \CIc ( T^*X ) $ and 
\be
\label{eq:gsa} 
\begin{split}
& H_p G_2  \geq 0 \ \ \ \ \ \  \ \  \text{on $ {T^*_{B(0,3R )}  X }$,}  \\ 
& H_p G_2 \geq 1 \ \ \ \ \ \ \; \, \text{ on 
$ { T^*_{B(0,3R )} X \cap (p^{-1}([-\delta, \delta ] ) \setminus V) }$, }
\\
& H_p G_2   \geq -\delta_0  \ \ \ \ \text{ on $ T^* X $.} 
\end{split}
\ee
Let 
$$ 
G \defeq M h \log ( 1/ h ) G_2 \,, \quad\text{with $M>0$ a fixed constant.}
$$ 
Then, in the notations of \S\ref{cef}, we will be interested in the
complex-scaled operator
\[  P_{\theta, R} \; : \; H_G^2 ( \RR^n ) \longrightarrow H_G ( \RR^n
) \,,\]
for a scaling angle depending on $h$:
\be\label{e:theta(h)}
\theta=\theta(h)= M_1\,h\log(1/h),\quad M_1>0\ \text{fixed}.
\ee
 Inserting the above estimates in (\ref{eq:pG}), we get
\begin{equation}
\label{eq:estaway} 
| \Re p_{\theta, R ,G } ( \rho ) | < \delta/2 \,, \ \ \Re \rho \notin 
V \,, \  \Longrightarrow \ 
\Im p_{\theta, R,G } ( \rho ) \leq - \theta/C_1 \,, 
\end{equation}
provided that we choose \cite[\S 6.1]{NZ2}
\be\label{e:theta}
\frac{M}{C}\geq M_1 \geq \frac{\delta_0 M}{C},\qquad\text{for some }C>0\,,
\ee
Assuming that the constant $M_0$ appearing in the statement of
Theorem~\ref{t:s} satisfies
$$0 < M_0 \leq M_1\,$$ 
for $\delta>0$ and $h>0$
small enough, the rectangle $\DOCh$ is contained in the uncovered
region in Fig.~\ref{f:com}, hence the scaling by the angle
\eqref{e:theta(h)} gives us access to the resonance spectrum in
the rectangle $\DOCh$. In \S\ref{s:final} we will need to further adjust
$M_0$ with respect to $M_1$.


\renewcommand\thefootnote{\ddag}%

\subsection{Grushin problems}
\label{gp}
In this section we recall some linear algebra facts related to the 
Schur complement formula, which are at the origin of the Grushin
method we will use to analyze the operator $P_{\theta,R}$. 

For any invertible square matrix decomposed into $4$ blocks, we have
\[ \begin{pmatrix} p_{11} &p_{12}\\ p_{21} &p_{22}\end{pmatrix}^{-1}  = 
 \begin{pmatrix} q_{11} &q_{12}\\ q_{21} &q_{22}\end{pmatrix} 
\ \Longrightarrow \ p_{11}^{-1} = q_{11} - q_{12} \, q_{22}^{-1} q_{21}\,,
\]
provided that $ q_{22}^{-1} $ exists (which implies that 
$ q_{22} $, and hence $ p_{11} $, are square matrices). We have the 
analogous formula for $ q_{22}^{-1} $: 
\[ q_{22}^{-1} = p_{22}-p_{21}p_{11}^{-1}p_{12} \,. \]

One way to see these simple facts is to apply 
gaussian elimination to 
\[ 
{\mathcal P} =\begin{pmatrix} p_{11} &p_{12}\\ p_{21} &p_{22}\end{pmatrix}
\] so that, if $p_{11}$ is invertible, we have an 
upper-lower triangular factorization:
\begin{equation}
\label{eq:uplo}
{\mathcal P}=\begin{pmatrix}p_{11} & 0\\p_{21} &1
\end{pmatrix}\begin{pmatrix} 1 &p_{11}^{-1}p_{12}\\ 0 &p_{22}-p_{21}p_{11}^{-1}p_{12}\end{pmatrix} \,. 
\end{equation} 

The formula for the inverse of $ p_{11} $
 leads to the construction of {\em effective Hamiltonians} for
operators (quantum Hamiltonians) $ P:\cH_1\to\cH_2 $. We first search for auxiliary 
spaces $ {\mathcal H}_\pm $ and  operators $ R_\pm $ for which the matrix of operators
\[   \begin{pmatrix} P - z & R_- \\
R_+ &  \ 0 \end{pmatrix} \; : \; {\mathcal H}_1 \oplus {\mathcal H}_- 
\longrightarrow {\mathcal H}_2 \oplus {\mathcal H}_+ \,,\]
is invertible for $z$ running in some domain of $\CC$. Such a matrix is called a {\em Grushin problem}, and
when invertible the problem is said to be {\em well posed}.

When successful this procedure reduces the spectral problem for $ P $ to a nonlinear
spectral problem of lower dimension. 
Indeed, if $  \dim \cH_- = \dim \cH_+ < \infty $, we write
\[  
\begin{pmatrix}  P - z  & R_- \\
R_+ &  0 \end{pmatrix}^{-1} = 
 \begin{pmatrix} E ( z ) & E_+ ( z ) \\
E_- (z )  &  E_{-+} ( z )  \end{pmatrix}\,, 
\]
and the invertibility of $ (P - z):\cH_1\to\cH_2 $ is equivalent to the invertibility of the
finite dimensional matrix $ E_{-+} ( z ) $. The zeros of $ \det E_{-+} ( z ) $ coincide with the eigenvalues of 
$ P $ (even when $ P $ is not self-adjoint) because of the following
formula:
\begin{equation}
\label{eq:trace}
\tr \oint_z ( P - w )^{-1} dw = 
- \tr \oint_z E_{-+}(w) ^{-1} 
E_{-+}'(w)\, d w\,,
\end{equation}
valid when the integral on the left hand side is of trace
class -- see \cite[Proposition 4.1]{SZ9} or verify it using the 
factorization \eqref{eq:uplo}. Here $ \oint_z $ denotes
an integral over a small circle centered at $ z$.
The above formula shows that $\dim\ker(P-z)=\dim\ker E_{-+}(z)$.

The matrix $E_{-+} ( z )$ is often called 
an {\em effective Hamiltonian} for the original Hamiltonian $P$ -- see \cite{SZ9} for a review of this formalism
and many examples. In the physics literature, this reduction is
usually called the Feshbach method.


We illustrate the use of Grushin problems with a simple lemma which will be useful later in \S\ref{s:final}.
\begin{lem}
\label{l:indg}
Suppose that 
\[ 
\cP \defeq  \begin{pmatrix} P & R_- \\
R_+ &   0 \end{pmatrix} \; : \; \cH_1 \oplus \cH_- 
\longrightarrow \cH_2 \oplus \cH_+ \,,\]
where $ \cH_j $ and $ \cH_\pm $ are Banach spaces. 
If $ P^{-1} : \cH_2 \rightarrow \cH_1 $ exists then 
\[ 
\cP \ \text{is a Fredholm operator} \ 
\Longleftrightarrow 
\  R_+ P^{-1} R_-:\cH_-\to\cH_+  \ \text{is a Fredholm operator} \,, 
\]
and 
\[ \ind \cP = \ind { R_+ P^{-1} R_-} \,. \]
\end{lem}
\begin{proof} 
We apply the factorization \eqref{eq:uplo} with $ p_{11} = P $, 
$ p_{12} = R_- $, $ p_{21} = R_+ $, $ p_{22} = 0 $. Since the
first factor is invertible we only need to check the 
the Fredhold property and the index of the second factor:
\[  \begin{pmatrix} 1 & P^{-1} R_- \\
0 & - R_+ P^{-1} R_- \end{pmatrix} \,, \]
and the lemma is immediate.
\end{proof}

\section{A microlocal Grushin problem}
\label{gps}

In this section we recall and extend the analysis of \cite{SjZw02} to 
treat a Poincar\'e section $\bSigma\subset p^{-1}(0)$ for
a flow satisfying the assumptions in \S \ref{da}. 
In \cite{SjZw02} a Poincar\'e section associated to a single closed orbit
was considered. The results presented here are purely microlocal
in the sense of \S\ref{mic}, first near a given component $\Sigma_k$ of the 
section, then near the trapped set $ K_0 $.
In this section $P$ is the original differential operator, but it
could be replaced by its complex scaled version $ P_{\theta, R } $, since the complex deformation
described in \S\ref{cs} takes place far away from $ K_0$. Also, 
when no confusion is likely to occur, we will often denote the Weyl quantization $\chi^w$
of a symbol $ \chi\in S(T^*\RR^d) $ by the same letter: $ \chi = \chi^w $.

\subsection{Microlocal study near $ \Sigma_k $.}
\label{ml}

First we focus on a single component $\Sigma_k$ of the Poincar\'e section,
for some arbitrary $k\in\{ 1,...,N\}$. Most of the time we will then
drop the subscript $k$. 
Our aim is to construct a microlocal Grushin problem for the operator 
\[ \frac i h (P-z) \,, \]
near $\Sigma=\Sigma_k$, where $|\Re z|\leq\delta$, $|\Im z|\leq M_0 h
\log( 1/h)$, and $\delta$ will be chosen small enough
so that the flow on $\Phi^t\rest_{K_{\Re z}}$ is a small perturbation of $\Phi^t\rest_{K_0}$.

\subsubsection{A normal form near $\Sigma_k$}\label{s:normal}
Using the assumption \eqref{eq:H5} and a version of Darboux's theorem (see for instance 
\cite[Theorem 21.2.3]{Hor2}), we may extend the map $\kappa_k =\kappa:\wSi_k\to\Sigma_k $ 
to a canonical transformation $\tkappa_k$ defined in a neighbourhood of $\wSi_k$ in $T^*\IR^n$,
$$
\tOmega_k\defeq 
\{ (x,\xi)\in T^*\IR^n; (x',\xi ')\in \wSi_k,\,\ |x_n|\leq\eps,\ |\xi_n|\leq\delta\}\,,
$$ 
such that
\be
\tkappa_k(x',0,\xi ',0)=\kappa_k(x',\xi ')\in\Sigma_k \,, \qquad p\circ \tkappa_k =\xi_n \,.
\ee
We call $\Omega_k=\tkappa_k(\tOmega_k)$ the neighbourhood of $\Sigma_k$ in $T^*X$ in the range of $\tkappa_k$.
The ``width along the flow'' $\eps>0$ is taken small enough, so that the sets $\{\Omega_k,\,k=1,\ldots,N\}$
are mutually disjoint, and it takes at least a time $20\eps$ for a point to travel between any $\Omega_k$
and its successors.

The symplectic maps $\tkappa_k$ allow us to extend the Poincar\'e
section $\bSigma$ to the
neighbouring energy layers $p^{-1}(z)$, $z\in [-\delta,\delta]$. Let us call
$$
\kappa_{k,z}\defeq \tkappa_k\rest (\tOmega_k\cap \{\xi_n=z\})\,.
$$
Then, if $\delta>0$ is taken small enough, for $z\in [-\delta,\delta]$
the hypersurfaces
$$
\Sigma_k(z)=\kappa_{k,z}(\wSi_k)=\{\tkappa(x',0;\xi',z),\ (x',\xi')\in \wSi_k\}
$$
are still transversal to the flow in $p^{-1}(z)$. Using this extension
we may continuously deform the
departure sets $D_{jk}$ into $D_{jk}(z)\defeq\kappa_{k,z}(\tD_{jk})\subset
\Sigma_k(z)$, and by consequence the tubes $T_{jk}$ into tubes
$T_{jk}(z)\subset p^{-1}(z)$ through a direct generalization of
\eqref{e:tubes}. The tube $T_{jk}(z)$ intersects $\Sigma_j(z)$ on the
arrival set $A_{jk}(z)\subset \Sigma_j(z)$; notice that for $z\neq 0$, the latter is in general
different from $\kappa_{j,z}(\tA_{jk})$
(equivalently $\tA_{jk}(z)=\kappa_{j,z}^{-1}(A_{jk}(z))$ is
generally different from $\tA_{jk}(0)$). These tubes induce a
Poincar\'e map $F_{jk,z}$ bijectively relating  $D_{jk}(z)$ with $A_{jk}(z)$.

The following Lemma, announced at the end of \S\ref{s:decompo}, 
shows that for $|z|$ small enough the interesting dynamics still takes
place inside these tubes: the 
trapped set is stable with respect to variations of the energy.
\begin{lem}\label{l:continu}
Provided $\delta>0$ is small enough, for any $z\in
  [-\delta,\delta]$ the trapped set $K_z\Subset \sqcup_{jk}T_{jk}(z)$. 

As a consequence, in this energy range the Poincar\'e map associated
with $\bSigma(z)$ fully describes the dynamics on $K_z$. 
\end{lem}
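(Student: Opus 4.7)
The plan is to reduce the lemma to the combination of two facts: first, that at $z=0$ one has the compact inclusion $K_0\Subset W_0$, where $W_0\defeq\sqcup_{jk}T_{jk}(0)$ is open in $p^{-1}(0)$; second, that both $z\mapsto K_z$ and $z\mapsto W_z\defeq\sqcup_{jk}T_{jk}(z)$ vary continuously enough to preserve this containment for $|z|$ small.

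For the first fact, any $\rho\in K_0$ lies, by flow invariance of $K_0$ and \eqref{eq:H3}, on the flow segment from $\rho_-(\rho)\in\TT_k$ to $\rho_+(\rho)\in\TT_i$ for some indices $(i,k)$ with $i\in J_+(k)$. By definition $\rho_-(\rho)\in\cD_{ik}\subset D_{ik}$, and the orbit segment through $\rho$ lies in the tube $T_{ik}(0)$. Since the $D_{ik}$ were chosen as open neighbourhoods of the compact sets $\cD_{ik}$ (with closures staying inside $\Sigma_k$ away from $\partial\Sigma_k$ by \eqref{eq:H1}), and the return times are uniformly bounded by $t_{\max}$, this yields the compact inclusion $K_0\Subset W_0$.

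For the second fact, I would first establish upper semicontinuity of $z\mapsto K_z$: for any open neighbourhood $\cU$ of $K_0$ in $T^*X$, $K_z\subset\cU$ for $|z|$ sufficiently small. This follows from a standard diagonal argument: a sequence $\rho_n\in K_{z_n}\setminus\cU$ with $z_n\to 0$ would, by the scattering structure \eqref{ge.3}, be confined to a fixed compact set; a limit point $\rho_\infty\in p^{-1}(0)\setminus\cU$ would have a uniformly bounded orbit by continuity of $\Phi^t$, hence $\rho_\infty\in K_0\subset\cU$, contradicting $\rho_\infty\notin\cU$. In parallel, the tubes themselves vary continuously in $z$: the symplectic maps $\kappa_{k,z}$ depend smoothly on $z$, the first-return times on $\tilde D_{ik}$ are continuous in $(\tilde\rho,z)$ (thanks to transversality \eqref{eq:H2} and the above upper semicontinuity applied to \eqref{eq:H1}), and the flow $\Phi^t$ is smooth; hence $T_{ik}(z)\to T_{ik}(0)$ in the Hausdorff sense as $z\to 0$.

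Combining these ingredients, I can pick an open neighbourhood $\cU$ of $K_0$ with $\overline{\cU}\cap p^{-1}(0)\Subset W_0$. By Hausdorff continuity of the tubes, $\overline{\cU}\cap p^{-1}(z)\subset W_z$ for $|z|\leq\delta$ small enough; by upper semicontinuity, $K_z\subset\cU$ for $|z|$ small; therefore $K_z\Subset W_z$, which is the first assertion. The second assertion is then immediate: any $\rho\in K_z\subset W_z$ sits on a flow segment whose endpoints lie on $\bSigma(z)$, and iterating forward and backward produces the full orbit of $\rho$ as a concatenation of tube-segments between successive Poincar\'e iterates, so the dynamics on $K_z$ is encoded by the Poincar\'e map $F_z$ acting on $K_z\cap\bSigma(z)$. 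The main technical obstacle is the continuity of the tubes near their boundaries, which rests critically on \eqref{eq:H1}, namely that the section boundaries stay at positive distance from the trapped set.
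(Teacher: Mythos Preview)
Your argument is correct, but the paper's proof proceeds differently and more directly. Instead of separately tracking the upper semicontinuity of $z\mapsto K_z$ and the continuous dependence of the tubes $W_z$, the paper uses the \emph{escape time} function
\[
t_{\mathrm{esc}}(\rho)\defeq \inf\{t>0:\ \max(|\pi_x\Phi^t(\rho)|,|\pi_x\Phi^{-t}(\rho)|)\geq R\}
\]
from a fixed interaction region $T^*_{B(0,R)}X$. By compactness, $t_{\mathrm{esc}}$ is bounded above by some finite $t_1$ on the compact set $p^{-1}(0)\cap T^*_{B(0,R)}X\setminus(\sqcup T_{ik})^\circ$; continuity of the flow then gives the bound $2t_1$ on the deformed complement $p^{-1}(z)\cap T^*_{B(0,R)}X\setminus(\sqcup T_{ik}(z))^\circ$ for $|z|$ small. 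Since $K_z$ is exactly the locus where $t_{\mathrm{esc}}=\infty$, this immediately forces $K_z\Subset\sqcup T_{ik}(z)$.

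The difference is mainly one of packaging. The paper's escape-time argument implicitly contains your upper semicontinuity step, but avoids having to choose an intermediate neighbourhood $\cU$ and then argue that its slices $\overline{\cU}\cap p^{-1}(z)$ remain inside $W_z$. In your write-up that last step is slightly imprecise: ``Hausdorff continuity of the tubes'' does not by itself yield $\overline{\cU}\cap p^{-1}(z)\subset W_z$; what you really use is that the tubes admit a smooth parametrization $(z,\tilde\rho,s)\mapsto\Phi^{s\,t_+(\kappa_{k,z}(\tilde\rho))}(\kappa_{k,z}(\tilde\rho))$, so that $\cup_{|z|\leq\delta_0}W_z^\circ$ is genuinely open in $p^{-1}([-\delta_0,\delta_0])$ and hence contains a tubular neighbourhood of $K_0$ in $T^*X$. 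Once stated this way the argument is fine. Your route has the virtue of isolating the two general principles (semicontinuity of trapped sets, stability of transversal sections) that make the result work; the paper's route is shorter and stays closer to the concrete dynamical data.
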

\begin{proof}
From our assumption in \S\ref{ass}, there exists a ball $B(0,R)$ (the
``interaction region'') such that, for any $E\in
[-1/2,1/2]$, the trapped set $K_E$ must be contained inside $T^*_{B(0,R)}X$.
If $R$ is large enough, any point $\rho\in p^{-1}(z)\setminus T^*_{B(0,R)}X$, $z\approx 0$,
will ``escape fast'' in the past or in the future, because the Hamilton vector
field is close to the one corresponding to free motion,
$2\sum_j\xi_j\partial_{x_j}$. Hence we only need to study the
behaviour of points in $p^{-1}(z)\cap T^*_{B(0,R)}\RR^n$. 

Let us define the
escape time from the interaction region $T^*_{B(0,R)}X$: for any $\rho\in T^*_{B(0,R)}X$,
$$
t_{esc}(\rho)\defeq \inf\{t>0,\ \max(|\pi_x\Phi^t(\rho)|, |\pi_x\Phi^{-t}(\rho)|) \geq R\}\,,
$$
For any $E\in[-1/2,1/2]$, the trapped set $K_E$ can be defined as the set of points in
$p^{-1}(E)$ for which $t_{esc}(\rho)=\infty$. 
Let us consider the neighbourhood of $K_0$ formed by the interior of
the union of tubes,
$(\sqcup T_{ik})^\circ$. By compactness, the escape time
is bounded from above outside this neighbourhood, that is in $p^{-1}(0)\cap
T^*_{B(0,R)}X\setminus(\sqcup T_{ik})^\circ$, by some finite $t_{1}>0$. 
By continuity of the flow $\Phi^t$, for $\delta>0$ small enough, the
escape time in the deformed neighbourhood $p^{-1}(z)\cap
T^*_{B(0,R)}X\setminus(\sqcup T_{ik}(z))^\circ$ will still be bounded
from above
by $2t_{1}$: this proves that $K_z\Subset \sqcup T_{ik}(z)$.
\end{proof}
A direct consequence is that the reduced trapped sets
$\TT_j(z)\defeq\Sigma(z)\cap K_{z}$ are contained inside $D_j(z)$.

For any set $S(z)$ depending on the energy in the interval $z\in [-\delta,\delta]$, we use the notation
\be\label{eq:energy-thick}
\widehat S \defeq \bigcup_{|z|\leq\delta} S(z)\,.
\ee
We will extend the notation to complex values of the parameter
$z\in \DOCh$, identifying $S(z)$ with $S(\Re z)$.


\subsubsection{Microlocal solutions near $\Sigma$}\label{s:micro-Sigma}

Let us now restrict ourselves to the neighbourhood of $\Sigma_k$, and drop the index $k$.
The canonical transformation $\tkappa$ can be locally quantized using the
procedure reviewed in \S\ref{fio},  
resulting in a microlocally defined unitary Fourier
integral operator 
\begin{equation}
\label{eq:U} U\; :\; H(\tOmega) \longrightarrow H(\Omega)\,, \ \ 
U^{*}\, P\, U\equiv hD_{x_n}  \,, \ \text{microlocally in $ \tOmega $.} 
\end{equation}
For $z\in \DOCh$, we consider the
microlocal Poisson operator
\begin{equation}\label{ml.1}
\mathrm{K}(z): L^2 ({\RR}^{n-1})\to L^2_{\rm{loc}} ({\RR}^n)\,,\quad [\mathrm{K}(z)\,v_+](x',x_n)=e^{ix_nz/h}\,v_+(x')\,,
\end{equation}
which obviously satisfies the equation $(hD_{x_n}-z)\,\mathrm{K}(z)\,v_+=0$.


For $ v_+ $ microlocally concentrated in a compact set, 
the wavefront set of $\mathrm{K}(z)\,v_+$ is not localized 
in the flow direction. On the other hand, the Fourier integral
operator $U$ is well-defined and unitary only from  $\tOmega$ to $\Omega$. Therefore, we use 
a smooth cutoff function $\chi_{\Omega}$, $\chi_{\Omega}=1$ in $\Omega$, $\chi_{\Omega}=0$ 
outside $\Omega'$ a small open neighbourhood of $\Omega$ (say, such that
$|x_n|\leq 2\eps$ inside $\tOmega'$), and 
define the Poisson operator
$$
K(z)\defeq\chi^w_{\Omega}\,U\,\mathrm{K}(z):H(\wSi)\to H(\Omega')\,.
$$
This operator maps any state $v_+\in H(\wSi)\subset L^2(\RR^{n-1})$, 
to a microlocal solution of the equation $(P-z)u=0$ in $\Omega$, with $u\in H(\Omega')$. 
As we will see below, the converse holds: each
microlocal solution in $\Omega$ is parametrized by a function $v_+\in H(\wSi)$. 

In a sense, the solution $u=K(z)v_+$ is an extension along the flow of the 
{\em transverse data} $v_+$. More precisely, $K(z)$ is a microlocally defined 
Fourier integral operator associated with the graph
\begin{equation}\label{eq:C_-}
C_-=\{(\tkappa(x',x_n,\xi',\Re z);x',\xi'),\ \ (x',\xi')\in\wSi,\ |x_n|\leq\eps\}\subset T^*(X \times \RR^{n-1})\,.
\end{equation}
Equivalently, this relation associates to each point $(x',\xi')\in \wSi$  a short trajectory segment through
the point $\tkappa(x',0;\xi',\Re z)\in \Sigma(\Re z)$.
We use the notation $C_-$ since this relation
is associated with the operator $R_-$ defined in \eqref{ml.6} below.

Back to the normal form $hD_{x_n}$, let us consider a smoothed out step function,
$$
\chi_0 \in \CI(\RR_{x_n}),\ \ \chi_0(x_n) =0\text{ for }x_n\le - \eps/2,\ \  
\chi_0 (x_n)=1\text{ for }x_n\ge  \eps/2\,.
$$ 
We notice that the
commutator $(i/h) [hD_{x_n},\chi_0]=\chi'_0 ( x_n)$ is localized in the 
region of the step and integrates to $1$: this implies the normalization property 
\begin{equation}\label{ml.2}
\la (i/h) [hD_{x_n},\chi_0 ]\mathrm{K}(z)v_+,\mathrm{K}(\bar{z})v_+ \ra
= \Vert v_+\Vert_{L^2 ( \RR^{n-1} ) } ^2 \,,
\end{equation}
where $ \langle \bullet , \bullet\rangle$ 
is the usual Hermitian inner product 
on $L^2(\RR^n)$. Notice that the right hand side is independent of the precise choice of $\chi_0$.

We now bring this expression to the neighbourhood of $\Sigma$ through the Fourier integral operator $\chi_{\Omega}^w U$.
This implies that the Poisson operator 
$K(z)$ satisfies:
\begin{equation}\label{ml.3}
\la (i/h)[P,\chi^w ]K(z)v_+,K(\bar{z})v_+\ra \equiv \Vert v_+\Vert^2\,\quad\text{ for any }v_+\in H(\wSi)\,.
\end{equation}
Here the symbol $\chi$ is such that $\chi^w\equiv U\,\chi_0^w\,U^*$ inside $\Omega$, so $\chi$ 
is equal to $0$ before $\Phi^{-\eps}(\Sigma )$ and equal to $1$ after $\Phi^{\eps}(\Sigma )$ (in the
following we will often use this time-like terminology referring to the flow $\Phi^{t}$).
In \eqref{ml.3}, we are only concerned with $[P,\chi^w ]$ microlocally near $\Omega$, since the
operator
$\chi_{\Omega}^wU$ is microlocalized in $\Omega'\times\tOmega'$. Hence, at this stage we can ignore
the properties of the symbol $\chi$ outside $\Omega'$. 

The expression \eqref{ml.3} can be written
\begin{equation}
\label{eq:KK} K ({\bar z })^* \,  [(i/h) P, \chi^w ] K(z) = Id \;:\; H(\wSi)\to H(\wSi)\,.
\end{equation}
Fixing a function $\chi $ with properties described after \eqref{ml.3} and
writing $ \chi =\chi _f$ (where $f$ is for {\em forward}), we define the operator
\begin{equation}\label{ml.4}
R_+ (z) \defeq K(\bar{z})^* \, [(i/h)P,\chi_f ] =\mathrm{K}(\bar z)^*\,U^*\chi_{\Omega}^w\, [(i/h)P,\chi_f ] 
\end{equation}
(from here on we denote $\chi=\chi^w$ in similar expressions).
This operator ``projects'' any $u\in H(\Omega)$ to a certain transversal function $v_+\in H(\wSi)$. 
But  it is important to notice that $R_+(z)$ is also well-defined on
states $u$ microlocalized in a small neighbourhood of the full trapped
set $\whK$:
the operator $\chi_{\Omega}^w\, [(i/h)P,\chi_f ] $ cuts off  the components of $u$ outside $\Omega$. Hence,
we may write
$$
R_+(z):H(\neigh(\whK))\to H(\wSi)\,.
$$

The equation 
\eqref{eq:KK} shows that this projection is compatible with the above
extension of the transversal function:
\begin{equation}\label{ml.5}
R_+ (z)\,K(z)=Id \; : \; H(\wSi)\to H(\wSi) \,.
\end{equation}
This shows that transversal functions $v_+\in H(\wSi)$ and microlocal solutions to $(P-z)u=0$ are bijectively related.
Since $|\Im z|\leq M_0 h \log (1/h)$ and $|x_n|\leq 2\eps$ inside
$\tOmega'$ (resp. $|x_n|\leq\eps$ inside $\tOmega$), we have the bounds
\[ 
\|K(z)\|_{L^2\to L^2} = \cO( h^{-2\eps M_0} )\,,\quad \|R_+(z)\|_{L^2\to L^2} = \cO( h^{-\eps M_0} ) \,.\quad 
\]
Just as $K(\bar z)^*$, $R_+(z)$ is a microlocally defined Fourier integral operator associated with the relation
\be\label{eq:C_+}
C_+=\{x',\xi';(\tkappa(x',x_n,\xi',\Re z)),\ \ (x',x_n,\xi',\Re z)\in\tOmega\}\subset T^*(\RR^{n-1} \times X)\,,
\ee
namely the inverse of $C_-$ given in \eqref{eq:C_-}. 
In words, this relation consists of taking any $\rho\in \Omega\cap p^{-1}(\Re z)$ and
projecting it along the flow on the section $\Sigma(z)$.

We now select a second cutoff function $\chi_b$ with properties similar with $\chi_f$, 
and satisfying also the nesting property
\begin{equation}
\label{so.5.0} 
\chi_b=1 \ \ \text{in a neighbourhood of }\supp\chi_f\,.
\end{equation}
With this new cutoff, we define the operator
\begin{equation}\label{ml.6}
R_-(z)u_-=[(i/h)P,\chi _b]\,K(z) \; : \; H(\wSi)\to H(\Omega)\,.
\end{equation}
Starting from a transversal data $u_-\in H(\wSi)$, this operator creates a microlocal solution in $\Omega$
and truncates by applying a pseudodifferential operator with symbol $H_p\chi_b$. Like $K(z)$, it is a microlocally 
defined 
Fourier integral operator associated with the graph $C_-$. its norm is
bounded by $\| R_-(z)\|_{L^2\to L^2}=\cO(h^{-\eps M_0})$.

\subsubsection{Solving a Grushin problem}\label{s:sol-Gru}
We are now equipped to define our microlocal Grushin problem in $\Omega$.
Given $v\in H(\Omega)$, $v_+\in H(\wSi)$, we want to solve the system
\begin{equation}\label{ml.7}
\begin{cases} ( i /h ) (P-z)u+R_-(z)u_-&=v, \\ 
R_+ (z)u&=v_+ \,,
\end{cases}
\end{equation}
with $u\in L^2(X)$ a forward solution, and  $u_-\in H(\wSi)$.

Let us show how to solve this problem.
First let $\wtu$ be the forward solution of
$( i /h ) (P-z) \wtu=v$, microlocally in $\Omega$.
That solution can be obtained using the Fourier integral
operator $ U $ in \eqref{eq:U} and the easy solution for $hD_{x_n} $.
We can also proceed using the propagator to define a forward parametrix:
\begin{equation}\label{ml.9}
\wtu \defeq E(z)\,v,\qquad E(z)\defeq\int_0^T  e^{-it(P-z)/h}\,dt.
\end{equation}
The time $ T $ is such that 
$\Phi^{ T } ( \Omega) \cap \Omega= \emptyset$ (from the above assumption on the separation between the $\Omega_k$ 
we may take $T=5\eps$).
By using the model operator $hD_{x_n}$, one checks that the parametrix $E(z)$ transports the wavefront set of $v$ as follows:
\be\label{eq:transport}
\WFh(E(z)v)\subset\WFh(v)\cup \Phi^{T}(\WFh(v))\cup\bigcup_{0\leq t\leq T}\Phi^{t}(\WFh(v)\cap p^{-1}(\Re z))\,.
\ee
In general, $\wtu$ does not satisfy $R_+(z)\wtu =v_+$, so we need to correct it.
For this aim, we solve the system
\begin{equation}\label{ml.9'}
\begin{cases}( i / h ) (P-z)\widehat{u} + R_-(z)u_- &\equiv 0\,,\\
R_+ (z)\widehat{u} &\equiv v_+ - R_+ (z)\wtu 
\end{cases}
\end{equation}
through the Ansatz
\begin{equation}\label{ml.10}
\begin{cases}u_-&= - v_+ + R_+ (z) \wtu\,,\\
\widehat{u}&=-\chi _b\, K(z)\,u_-\,.\\
\end{cases}
\end{equation}
Indeed, the property $ ( P -z )\, K(z) \equiv 0 $ ensures that $ ( i /h ) ( P - z ) \widehat u = - R_-(z) u_- $.
We then obtain the identities
\[ \begin{split} 
R_+ (z) \widehat u & = - K(\bar{z})^* \,  [(i/h)P,\chi_f ] \,
 \chi_b \,K(z)\, u_- \\
& \equiv - K(\bar{z})^* \,  [(i/h)P,\chi_f ]\, K(z)\, u_-
\\
& \equiv -u_-
\,.
\end{split}
\]
The second identity uses the nesting assumption $(H_p\chi_f)\chi_b=H_p\chi_f$, and the last one 
results from \eqref{eq:KK}. This shows
that the Ansatz \eqref{ml.10} solves the system  \eqref{ml.9'}.
Finally, $(u=\wtu+\widehat{u},u_-)$ solves \eqref{ml.7} microlocally in $\Omega\times\tSigma$, for
$ v \in H(\Omega)$ and $ v_+ \in H(\wSi)$ respectively.
Furthermore, these solutions satisfy the norm estimate
\begin{equation}\label{ml.11}
\Vert u\Vert+ \Vert u_-\Vert \lesssim h^{-5 M_0 \eps}(\Vert v\Vert + \Vert v_+\Vert) \,.
\end{equation}
The form of the microlocal construction in this 
section is an important preparation for the construction of our Grushin problem in the next section.
In itself, it only states that, for $ v $ microlocalized near $\Sigma$, $( i /h ) ( P - z ) u = v $ can be
solved microlocally near $ \Sigma $ in the forward
direction. 


\subsection{Microlocal solution near $ \whK$.}
\label{mnK}

We will now extend the construction of the Grushin problem near each $ \Sigma_k $, described in \S\ref{ml},
to obtain a microlocal Grushin problem near the full
trapped set $ \whK$. This will be achieved by relating the construction near $\Sigma_k$
to the one near the successor sections $\Sigma_j$.  We 
now need to restore all indices $k\in \{1,\ldots,N\}$ in our notations.

\subsubsection{Setting up the Grushin problem}
We recall that $ H (\wSi_k ) \subset L^2 ( \RR^{n-1} ) $ is the
space of functions microlocally concentrated in $ \wSi_k $ 
(see \eqref{eq:HV}).
For $u\in L^2(X)$ microlocally concentrated in $\neigh(\whK,T^*X)$,
we define
\ekv{so.1}
{
R_+(z)u=(R_+^1(z)u,...,R_+^N(z)u)\in H(\wSi_1)\times
...\times H(\tSigma_N) \,,
}
where each $ R_+^k ( z ):H(\neigh(\whK))\to H(\wSi_k) $ was defined in \S\ref{ml} using 
a cutoff $ \chi_f^k\in \CIc(T^*X) $ realizing a smoothed-out step from $0$ to $1$ along the flow
near $\Sigma_k$.

Similarly, we define
\be\label{so.2}
\begin{split}
R_-(z)\,:\,& H(\wSi_1)\times\ldots\times H(\wSi_N)\to H(\cup_{k=1}^N\Omega_k),\\
R_-(z)u_-&=\sum_1^N R_-^j(z)u_-^j,\qquad u_-=(u_-^1,...,u_-^N).
\end{split}\ee
Each $R_-^k(z)$ was defined in \eqref{ml.6} in terms of a cutoff function $\chi_b^k\in\CIc(T^*X)$ which also changes
from $0$ to $1$ along the flow near $\Sigma_k$, and does so before $\chi_f^k$. Below we will impose
more restrictions on the cutoffs $\chi_b^k$.

With these choices, we now consider the microlocal Grushin problem 
\ekv{so.4}
{
\begin{cases} (i/h) (P-z)u+R_-(z)u_- &\equiv v\,,\\ R_+(z)u &\equiv v_+ \,.\end{cases} 
}
The aim of this section is to construct a solution $(u,u_-)$
microlocally concentrated in a small neighbourhood of 
\[ 
K_0\times\kappa _1^{-1} (\TT_1)\times ...\times \kappa _N^{-1} (\TT_N) \,, 
\]
provided $(v,v_+)$ is concentrated in a
sufficiently small neighbourhood of the same set. 

To achieve this aim we need to put more constraints on the cutoffs $\chi_b^k$. We assume that each  
$ \chi_b^k\in \CIc (T^*X)$ is supported near the direct outflow of $\TT_k$. To give a precise condition,
let us slightly modify the energy-thick tubes $\whT_{jk}$ (see \eqref{e:tubes}, \eqref{eq:energy-thick}) 
by removing or adding some parts near their ends:
\[
\whT^{s_1 s_2}_{jk}\defeq \{\Phi^{t} (\rho)\,:\,\rho\in \whD_{jk},\ -s_2\,2\eps< t< t_+(\rho)+s_1\,2\eps\}\,,\qquad
s_i=\pm \,.
\]
With this definition, the short tubes $\whT^{--}_{jk}$ do not intersect the neighbourhoods $\Omega_k$, $\Omega_j$,
while the long tubes $\whT^{++}_{jk}$ intersect both (see Fig.~\ref{f:cutoffs}).

We then assume that 
\ekv{so.4.1}
{
\chi^k_b(\rho) = 1 \ \ \ \text{for} \ \ \rho\in \bigcup_{j\in J_+(k)} \whT^{--}_{jk}\,,
}
and $\supp\chi^k_b$ is contained in a small neighbourhood of that set.
Furthermore, we want the cutoffs $\{\chi_b^k\}_{k=1,\ldots,N}$
to form a {\em microlocal partition of unity} near $K_0$: there exists a 
neighbourhood $V_0$ of $\whK$ containing all long tubes:
\be\label{eq:V_0}
V_0\supset\bigcup_{k,j}\whT_{jk}^{++}\,,
\ee
and such that 
\ekv{so.5}
{
\sum_{k=1}^N \chi _b^k ( \rho ) \equiv 1   \ \ \ \text{for} \ \ \rho \in V_0 \,.
}

These conditions on $\chi_b^k$ can be fulfilled thanks to the assumption \eqref{eq:H4} on the section $\bSigma$. 
A schematic representation of these sets and cutoffs is shown in Fig.~\ref{f:cutoffs}.
\begin{figure}
\begin{center}
\includegraphics[width=0.9\textwidth]{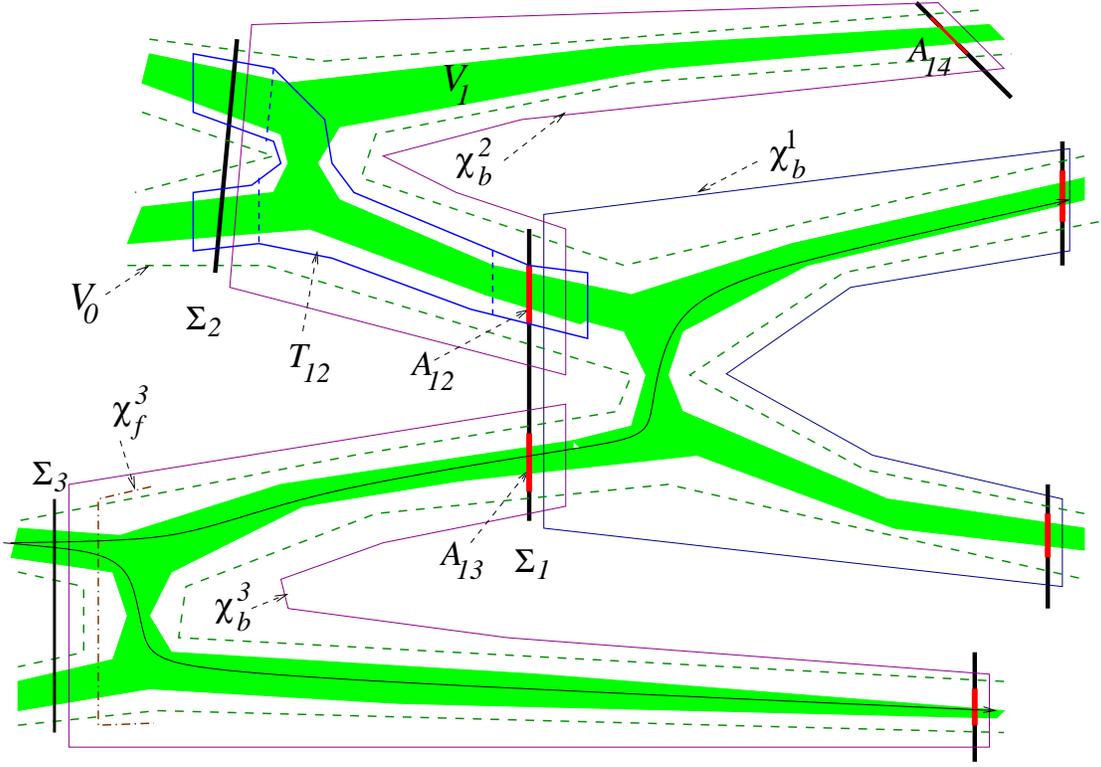}
\caption{\label{f:cutoffs} Schematic representation of (part of) the neighbourhoods  $V_1\subset V_0$
of $K_0$ (resp. green shade and green dashed contour),
some sections $\Sigma_k$ (thick black) and arrival sets $A_{kj}\subset\Sigma_k$ (red). We also show
the tubes $T^{\pm\pm}_{12}$ connecting
$\Sigma_2$ with $A_{12}$ (the dashed lines indicate the boundaries of $T_{12}^{--}$), the 
supports of the cutoffs $\chi_b^k$ and $\chi_f^{3}$ (dot-dashed line), and two trajectories in $K_0$ (full lines inside $V_1$). }
\end{center}
\end{figure}

\subsubsection{Solving the homogeneous Grushin problem}\label{s:homog-K0}

Let us first solve \eqref{so.4} when $v\equiv 0$. 
The wavefront set $\WFh(v_+^k)\subset\tSigma_k$ is mapped 
through $\kappa_{k,z}$ to a subset of $\Sigma_k(z)$.
The microlocal solution $K_k(z)v_+^k$, initially concentrated inside the neighbourhood $\Omega_k'$, 
can be extended along the flow to a larger set $\Omega_k^+$, which
intersects the successors $\Sigma_j(z)$ of $\Sigma_k(z)$ 
and contains the union of  tubes $\bigcup_{j\in J_+(k)}\whT^{++}_{jk}$ (we remind
that $j\neq k$ according to assumption \eqref{eq:H4}). This can be done
by extending the symplectomorphism $\tkappa_k$, the associated unitary Fourier integral operator $U_k$, and replace
the cutoff function $\chi_{\Omega_k}$ by a function $\chi_{\Omega_k^+}$ supported in the set $\Omega_k^+$; we can then
define the extended Poisson operator as:
$$
K^+_k(z)=\chi_{\Omega_k^+}^w\,U_k\,\mathrm{K}(z): H(\wSi)\to H(\Omega_k^+)\,.
$$
Assuming  $\kappa_{k,z}(\WFh(v_+^k))$
is contained in the departure set $D_k(z)\subset \Sigma_k(z)$,
the extended microlocal solution $K^+_k(z)v_+^k$ is concentrated in the union of tubes $\cup_{j\in J_+(k)}T_{jk}^{++}(z)$.
In that case, we take as our Ansatz
\begin{equation}
\label{eq:uk} 
u_k\defeq \chi _b^k\,K^+_k(z)\,v_+^k\,. 
\end{equation}
Set 
\be\label{e:t_max}
t_{\max}\defeq \max\{t_+(\rho),\ \rho\in \sqcup_k D_k(z),\ |\Re z|\leq \delta\}
\ee
the maximal return time for our Poincar\'e map. Then the above Ansatz
satisfies the estimate 
\be\label{e:uk-norm}
\| u_k \|_{L^2}\lesssim h^{-M_0(t_{\max}+\eps)}\,\|v_+\|_{H(\tD_k)}\,.
\ee

Due to the assumption \eqref{so.4.1}, the cutoff $\chi_b^k$ effectively truncates the solution only
near the sections $\Sigma_k(z)$ and $\Sigma_j(z)$, $j\in J_+(k)$, but not on the ``sides'' of 
$\supp\chi^k_b$. Hence, the expression
\ekv{eq:sol1}
{
(i/ h) (P-z)u_k\equiv  [(i/ h)P,\chi _b^k]\,K^+_k(z)\,v_+^k
}
can be decomposed into one component $R_-^k (z) v_+^k$ supported near $D_k(z)$,
and other components supported near the arrival
sets $A_{jk}(z)\subset\Omega_j$, due to the ``step down'' of $\chi^k_b$ near $A_{jk}(z)$. 
The assumption \eqref{so.5} ensures that
\ekv{eq:sol2}
{
[(i/ h)P,\chi _b^k]\equiv - [(i/ h)P,\chi _b^j]\quad\text{microlocally near } A_{jk}(z)\,,
}
so the expression in \eqref{eq:sol1} reads
\ekv{eq:sol3}
{
(i/ h) (P-z)u_k \equiv  R_-^k (z) v_+^k - \sum_{j\in J_+(k)} [(i/ h)P,\chi _b^j]\,K^+_k(z)\,v_+^k\,.
}
Now, for each $j\in J_+(k)$ we notice that $K^+_k(z)\,v_+^k$ is a solution of $(P-z)u=0$ near $A_{jk}(z)$,
so this solution can also be 
parametrized by some transversal data ``living'' on the section $\Sigma_j(z)$ (see the discussion before \eqref{eq:C_-}). 
This data obviously depends linearly on $v_+^k$, which defines the {\em monodromy operator} $\cM_{jk}(z)$:
\begin{equation}
\label{eq:defM}
 K^+_k(z) v_+^k\equiv K_j(z)\,\cM_{jk}( z ) v_+^k\,, 
\quad \text{microlocally near $A_{jk}(z)$\,.  } 
\end{equation}
The operators $ \cM_{jk} (z) $ are microlocally defined from $\tD_{k}\subset\tSigma_k$ to 
$\tA_{jk}(z) \subset\tSigma_j$, they are zero on $H(\tD_{\ell k})$ for $\ell\neq j$. 
The identity \eqref{eq:KK} provides an explicit formula:
\begin{equation}
\label{eq:defM'}
\cM_{jk} ( z) = K_j(\bar z )^* \, [(i/h) P , \chi^j_f ] K^+_k ( z )= R_+^j ( z )K^+_k(z) \,.
\end{equation}
Before further describing these operators, let us complete the solution of our Grushin problem.
Combining \eqref{eq:sol3} with \eqref{eq:defM}, we obtain
\be\label{eq:sol4}
(i/ h) (P-z)u_k \equiv  R_-^k (z) v_+^k - \sum_{j\in J_+(k)}R_-^j ( z ) \cM_{jk}( z ) v_+^k\,.
\ee
This shows that the problem \eqref{so.4} in the case $v=0$ and a single $v_k^+$, 
$\WFh(v_+^k)\subset\tD_k$ is solved by
$$
u\equiv \chi _b^k\,K^+_k(z)\,v_+^k,\quad u_-^k=-v_+^k,\quad u_-^j=\cM_{jk} ( z) v_+^k,\ \ j\in J_+(k)\,.
$$
We now consider the Grushin problem with $v=0$, $v_+=(v_+^1,\ldots,v_+^N)$ 
with each $v_+^k$ microlocalized in $\tD_k$.
By linearity, this problem is solved by
\be\label{eq:sol-homog}
\begin{split}
u&\equiv \sum_k\chi _b^k\,K^+_k(z)\,v_+^k,\\
u_-^j&\equiv  - v_+^j  + \sum_{k\in J_-(j)}\cM_{jk} ( z) v_+^k \,.
\end{split}\ee
From the above discussion, $u$ is microlocalized in the neighbourhood $V_0$ of $\whK$, while $u_-^j$ is
microlocalized in $\tD_{j}\cup\tA_{j}(z)$.

Let us now come back to the monodromy operators. 
The expression \eqref{eq:defM'} shows that $\cM_{jk}(z)$ is a microlocal
Fourier integral operator. Since we have extended the solution $K_k(z)\,v_+^k$ beyond $\Omega_k$, 
the relation associated with the restriction of $K^+_k(z)$ on $H(\tD_{jk})$ is a modification of \eqref{eq:C_-},
of the form
$$
C_-^{jk}=\{(\Phi^{t}(\tkappa_{k,z}(\rho));\rho),\ \rho\in \tD_{jk},\ \ -\eps\leq t\leq t_{\max}+\eps\}\,,
$$
such that the trajectories cross $\Sigma_j$. On the other hand, the relation $C_+$
associated with $R_+^j(z)$ is identical with \eqref{eq:C_+}. 
By the composition rules, the relation associated with $\cM_{jk}(z)$ is 
$$
C^{jk}=\{(\rho',\rho),\ \ \rho\in \tD_{jk},\
\rho'=\kappa_{j,z}^{-1}\circ F_{jk,z}\circ \kappa_{k,z}(\rho)= \tF_{jk,z}(\rho)\}\,.
$$
This is exactly the graph of the 
Poincar\'e map $F_{jk,z}:D_{jk}(z)\to A_{jk}(z)$, seen through the coordinates charts $\kappa_{k,z}$, $\kappa_{j,z}$.

When $z$ is real, the identity \eqref{eq:KK} implies that $\cM_{jk}(z):H(\tD_{jk})\to H(\tA_{jk}(z))$ is microlocally
unitary. Also, the definition \eqref{eq:defM'} shows that this operator depends
holomorphically of $ z $ in the rectangle $\DOCh$. 
To lowest order, the $z$-dependence takes the form
$$
\cM_{jk}(z)=\cM_{jk}(0)\,\Op(\exp(iz\tilde{t}_{+}/h))\,\big(1+\cO(h\log(1/h))\big)
$$
where $\tilde{t}_+\in \CIc(\RR^{n-1};\RR_+)$ is an extension of the
return time associated with the map $\tF_{jk,z}$ on $\tD_{jk}$. 
For $z\in\DOCh$, this operator satisfies the asymptotic bound
\be\label{e:M-norm-bound}
\| \cM_{jk}( z) \|_{H(\tD_k)\to H(\tA_j(z))}=\cO(h^{-M_0 t_{\max}})\,.
\ee

\subsubsection{Solving the inhomogeneous Grushin problem}\label{s:inhomog-K0}

It remains to discuss the inhomogeneous problem
\ekv{so.7}
{
(i/h) (P-z)u+R_-u_- \equiv v\,,
}
for $v$ microlocalized in a neighbourhood $V_1$ of $\whK$, which satistfies
\be\label{eq:V_1}
V_1\subset \bigcup_{j,k}\whT^{-+}_{jk}\,.
\ee 
(each tube $\whT^{-+}_{jk}$ intersects $\Omega_k$ only near $\whD_k$, see figure~\ref{f:cutoffs}).

Let us first assume that $v$ is microlocally concentrated inside a short tube $\whT_{jk}^{--}$.
We use the forward parametrix  $E(z)$ of $(i/h)(P-z)$ given in
\eqref{ml.9} with the time
\be\label{eq:T}
T= t_{\max}+5\eps\,,
\ee
and consider the Ansatz
\be\label{eq:uu}
u\defeq \chi _b^k\,E(z)\,v\,.
\ee
According to the transport property \eqref{eq:transport},
$E(z)v$ is microlocalized in the outflow of $\whT_{jk}^{--}$, so the cutoff $\chi_b^k$
effectively truncates $E(z)v$ only near $A_{jk}(z)\subset \Omega_j$. 
The partition of unity \eqref{so.5} then implies that
$$
(i/ h) (P-z)u \equiv v+ [(i/h)P,\chi _b^k]\,E(z)\,v\, \equiv v - [(i/h) P,\chi _b^j]\,E(z)\,v\,.
$$
Also, the choice of the time $T$ ensures that $E(z)v$ is a microlocal solution of $(P-z)u=0$ near $A_{jk}(z)$,
so
$$
E(z)v\equiv K_j(z) R_+^j ( z )E(z)v \quad\text{microlocally near }A_{jk}(z)\,.
$$
Thus, we can solve (\ref{so.7}) by taking
$$
u_-^j\equiv R_+^j ( z ) E(z)v\,,\quad u_-^\ell=0,\ \  \ell\neq j\,.
$$
The propagation of 
wavefront sets given in \eqref{eq:transport} 
shows that $u_-^j\in H(\tA_{jk}(z))$, 
and that $\WFh(u)\subset \whT_{jk}^{+-}$ 
does not intersect the ``step up'' region of the forward cutoffs $\chi_f^\ell$, so that $R_+^\ell(z) u\equiv 0$
for all $\ell=1,\ldots,N$. 

If $v$ is
microlocally concentrated in $V_1\cap\cup_{|t|\leq\eps}\Phi^t(\whD_{k})$, we can replace
the cutoff $\chi _b^k$ in \eqref{eq:uu} by 
$$\chi _b^k+\sum_{\ell\in J_-(k)} \chi _b^\ell\,, $$
 and apply the same construction. The only notable difference
is the fact that $R_+^k(z)u$ may be a nontrivial state concentrated in $\cup_{|t|\leq\eps}\whD_{k}$.

In both cases, we see that 
$\Vert u\Vert+\Vert u_-\Vert \lesssim h^{-M_0 (t_{\max}+2\eps)}\Vert
v\Vert$. By linearity, the above procedure allows to solve \eqref{so.7} for any $v$ microlocalized inside the
neighbourhood $V_0$.

This solution produces a term $R_+u$, which can be solved
away using the procedure of \S\ref{s:homog-K0}. Notice that
$\Vert R_+u\Vert\lesssim h^{-M_0 (t_{\max}+\eps)}\Vert v\Vert$.

\medskip

We summarize the construction of our microlocal Grushin problem in the following
\begin{prop}\label{so1}
For $\delta>0$ small enough, there exist neighbourhoods of $\whK=\whK_\delta$ in $T^*X$,  $V_+$ and $V_-$,
and neighbourhoods of $ \tkappa_{j}^{-1}(\widehat\TT_j)$ in $\tSigma_j$, 
$V_+^j $, and $ V^j_- $, $ j = 1, \cdots \, N $, such that for any  
\[ (v,v_+)\in H ( V_+ ) \times H ( V^1_+ ) \times \cdots H ( V^N_+ ) \,, \]
we can find
\[ (u,u_-) \in H ( V_- ) \times H ( V^1_-) \times \cdots H ( V^N_- ) \,, \]
satisfying
\[
\frac i h (P-z)u+R_-(z)u_- \equiv v\,,\qquad R_+(z)u \equiv v_+ \, \quad\text{microlocally in }V_+\times V^1_+\times\cdots V^+_N\,.
\]
Here $ R_\pm ( z ) $ are given by 
\eqref{so.1} and \eqref{so.2}. Furthermore, the solutions satisfy the norm estimates
\[
\Vert u\Vert+\Vert u_-\Vert \lesssim h^{-M_0 (2t_{\max} + 2\eps)}
(\Vert v\Vert + \Vert v_+ \Vert) \,,
\]
where $t_{\max}$ is the maximal return time defined in \eqref{e:t_max}. 

One possible choice for the above sets is
$$
V_+=V_1,\ \ V_-\defeq V_0,\ \ V_+^k=\tD_k,\ \ V_-^k=\tD_{k}\cup\bigcup_{|\Re z|\leq \delta}\tA_{k}(z)\,.
$$
\end{prop}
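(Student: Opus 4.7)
The plan is to combine the two constructions already carried out in \S\ref{s:inhomog-K0} and \S\ref{s:homog-K0}, applied in sequence, and then verify that the microlocalization of the resulting pair $(u,u_-)$ lives in the sets declared in the statement. I would fix the neighbourhoods from the outset: take $V_+ \defeq V_1$ and $V_- \defeq V_0$ as in \eqref{eq:V_0}, \eqref{eq:V_1}, and set $V_+^k \defeq \tD_k$, $V_-^k \defeq \tD_k \cup \bigcup_{|\Re z|\leq\delta}\tA_k(z)$. These are the only sets the various microlocal constructions actually see, so it suffices to verify the Grushin identities on these.

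Step one: solve the inhomogeneous part. Given $v\in H(V_1)$, decompose $v \equiv \sum_{j,k} v_{jk}$ with $v_{jk}$ microlocalized in the short tube $\whT_{jk}^{--}$ (or a neighbourhood of $\whD_k$, treated by adding the predecessor cutoffs as indicated after \eqref{eq:uu}). For each piece, set $u_{jk} \defeq \chi_b^k\,E(z)\,v_{jk}$ with the propagator time $T = t_{\max}+5\eps$ from \eqref{eq:T}; the wavefront transport rule \eqref{eq:transport} guarantees that $\chi_b^k$ cuts only near $A_{jk}(z)$, and the partition of unity \eqref{so.5} converts $[(i/h)P,\chi_b^k]E(z)v_{jk}$ into $-[(i/h)P,\chi_b^j]E(z)v_{jk}$ there. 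Since $E(z)v_{jk}$ solves $(P-z)=0$ microlocally near $A_{jk}(z)$, the identity \eqref{eq:KK} lets us write this as $-R_-^j(z)\,(R_+^j(z)E(z)v_{jk})$. Summing over $(j,k)$ produces $(u^{(1)}, u_-^{(1)})$ satisfying $(i/h)(P-z)u^{(1)} + R_-(z)u_-^{(1)} \equiv v$, with $u^{(1)}$ microlocalized in $\bigcup\whT_{jk}^{+-}\subset V_0$ and each $u_-^{(1),j}$ microlocalized in $\bigcup_{k\in J_-(j)}\tA_{jk}(z)\subset V_-^j$. A byproduct is $R_+^j(z)u^{(1)} \equiv v'_{+,j}$, with $v'_+ \defeq R_+(z)u^{(1)}$ microlocalized in the departure sets $V_+^k$, and the norm bound $\|u^{(1)}\|+\|u_-^{(1)}\|+\|v'_+\|\lesssim h^{-M_0(t_{\max}+5\eps)}\|v\|$.

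Step two: correct the $R_+$ component via the homogeneous construction of \S\ref{s:homog-K0} applied to the datum $v_+ - v'_+$ (which lies in $H(V_+^1)\times\cdots\times H(V_+^N)$). Set
\[
u^{(2)} \defeq \sum_{k} \chi_b^k\,K_k^+(z)(v_+^k - v'^{\,k}_+),\qquad
u_-^{(2),j} \defeq -(v_+^j-v'^{\,j}_+) + \sum_{k\in J_-(j)}\cM_{jk}(z)(v_+^k-v'^{\,k}_+).
\]
By \eqref{eq:sol4} this solves $(i/h)(P-z)u^{(2)} + R_-(z)u_-^{(2)}\equiv 0$ and $R_+(z)u^{(2)}\equiv v_+-v'_+$, with $u^{(2)}$ microlocalized in $\bigcup_{j,k}\whT^{++}_{jk}\subset V_0$ and $u_-^{(2),j}$ microlocalized in $\tD_j \cup \bigcup_{k\in J_-(j)}\tA_{jk}(z)\subset V_-^j$. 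Setting $(u,u_-)\defeq(u^{(1)}+u^{(2)}, u_-^{(1)}+u_-^{(2)})$ gives a solution of the full system \eqref{so.4} with the claimed microlocalizations.

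The norm estimate is obtained by composing the bounds of the two steps: the forward parametrix contributes a factor $h^{-5\eps M_0}$, the monodromies $\cM_{jk}(z)$ contribute $h^{-M_0 t_{\max}}$ by \eqref{e:M-norm-bound}, and the extended Poisson operators $K_k^+(z)$ contribute $h^{-M_0(t_{\max}+\eps)}$ by \eqref{e:uk-norm}; their combination yields $\|u\|+\|u_-\|\lesssim h^{-M_0(2t_{\max}+2\eps)}(\|v\|+\|v_+\|)$, as claimed (the precise exponent is a matter of bookkeeping — one may enlarge $t_{\max}$ slightly to absorb the $\eps$-terms). The main technical point to watch, and what I would expect to be the most delicate part, is the compatibility between the two steps: one must check that the homogeneous correction $u^{(2)}$, which lives in the neighbourhoods $\bigcup\whT^{++}_{jk}$, does not generate a spurious contribution to $R_-u$ or $(i/h)(P-z)u$ outside what has already been accounted for, and that the extra $R_+(z)u^{(2)}$ component produced by correcting $v'_+$ indeed closes up the system modulo $\cO(h^\infty)$. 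This boils down to the nesting property \eqref{so.5.0} together with the partition of unity \eqref{so.5} on $V_0$, and to the fact that $V_+=V_1$ was chosen inside the short tubes $\whT^{-+}_{jk}$ so that $v'_+$ is truly supported in the departure sets where the homogeneous Ansatz applies without modification.
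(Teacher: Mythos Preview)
Your proposal is correct and follows essentially the same two-step strategy as the paper: first solve the inhomogeneous problem \eqref{so.7} using the forward parametrix and the partition of unity \eqref{so.5}, then correct the resulting $R_+$-defect by applying the homogeneous solution \eqref{eq:sol-homog} to the datum $v_+ - R_+(z)u^{(1)}$. The paper's own proof is terser---it simply invokes the two preceding subsections and checks the microlocalizations---whereas you spell out the decomposition of $v$ and the compatibility argument explicitly; the minor discrepancies in the $\eps$-bookkeeping of the exponents do not affect the final bound.
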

\begin{proof}
Take 
$v\in H(V_1)$, and call $(\wtu,\wtu_-)$ 
the solution for the inhomogeneous problem \eqref{so.7}.  
Then the propagation estimate \eqref{eq:transport}
implies that $\wtu$ is concentrated inside the 
larger neighbourhood 
$V_0\subset\cup_{j,k}\whT_{jk}^{++}$ (see \eqref{eq:V_0}), while $\wtu_-^j\in H(\tA_j(z))$.

We have $R_+^k(z)\wtu\in H(\tD_k)$ so, provided the data satisfies
$v_+^k\in \tD_k$, 
the computations of \S\ref{s:homog-K0} show how to solve the homogeneous
problem with data $(v_+-R_+(z)\wtu)$. That solves the full problem. 
The expressions \eqref{eq:sol-homog} show that 
the solutions to the homogeneous problem $(\widehat{u},\widehat{u}_-^k)$ are microlocalized, 
respectively, in 
$V_0$ and in $\tD_{k}\cup\tA_{k}(z)$.
\end{proof}

\medskip

\noindent
{\bf Remark.}
The proof of the proposition shows that the neighbourhoods $ V^k_+$ and $V^k_-$ 
are different. 
For given data $(v,v^+)$, the solutions
$(u,u_-)$ will not in general be
concentrated in the same small set as the initial data. This, of course, reflects
the fact that a neighbourhood $V$ of $K_0$ is not invariant under the forward flow, but
escapes along the unstable direction.
In order to transform the microlocal Grushin problem described in this proposition into a
well-posed problem, we need to take care of this escape phenomenon. This will be done
using escape functions in order to deform the norms on the spaces $L^2(X)$ (as described in \S\ref{cef}),
but also on the auxiliary spaces $L^2(\RR^{n-1})$.



\section{A well posed Grushin problem}
\label{wpg}

The difficulty described in the remark at the end of \S \ref{gps} 
will be resolved by modifying the norms on the space $L^2(X)\times L^2(\RR^{n-1})^N$,
through the use of exponential 
weight functions as described in \S \ref{cef}. These weight functions will
be based on the construction described in \S\ref{eft}.

In most of this section we will consider the scaled operator $P_{\theta,R}$ globally,
so we cannot replace it by $P$ any longer. To alleviate notation, we will write this operator
\begin{equation}
\label{eq:scaleR}   P = P_{\theta , R } \,, \quad\theta = M_1 h \log ( 1/h) \,, \quad
R \gg C_0 \,, 
\end{equation}
where $C_0 $ is the constant appearing in \eqref{ge.2}, and $M_1>0$ is
a constant (it will be required to 
satisfy \eqref{e:theta} once we fix the weight $G$, and is larger than
$M_0$ appearing in Theorem~\ref{t:s}). 

We will first discuss the local construction near each $ \Sigma_k$ 
and then, as in the previous section, adapt it to construct
a global Grushin problem.

Our first task is still microlocal: we explain how a deformation of
the norm on $L^2(X)$ by a suitable weight function $G$ can be
used to deform the norms on the $N$ auxiliary spaces $L^2(\RR^{n-1})$,
microlocally near $\tSigma_k$.

\subsection{Exponential weights near $ \Sigma_k $.}
\label{ewn}

As in \S\ref{ml}, in this subsection 
we work microlocally in the neighbourhood $\Omega_k$ of one component $\Sigma_k$ ($\Omega_k$ 
is the neighbourhood described in \S\ref{ml}); we 
drop the index $k$ in our notations.
Notice that the complex scaling has no effect in this region, so $P\equiv P_{\theta,R}$. We will impose a constraint
on the weight function $G$ near $\Sigma$, and construct a weight functions $g$ on $\tSigma$. The
construction of the local solution performed in \S\ref{ml} will then be studied in these deformed spaces.

Take a function $g^0\in \CIc(\IR^{n-1})$, and use it to define $\wtG_0\in\CI(T^*\RR^n)$, so that
$$
\wtG_0(x',x_n,\xi',\xi_n) = g^0(x',\xi')\quad\text{in }\tOmega'.
$$
Then, using the Fourier integral operator $U$ given in \eqref{eq:U}, 
one can construct a weight function $G_0\in S(T^*X)$ such that
$$
G_0^w \equiv U\,(\wtG_0)^w\,U^*\quad\text{microlocally near }\Omega.
$$
Notice that $G_0$ now depends on $h$ through an asymptotic expansion
\ekv{ad.0.2}{
G_0(h)\sim \sum_{j\geq 0} h^j\,G_{0,j}\,,\quad G_{0,j}\in \CIc(T^* X)\text{ independent of }h\,.
}
This weight satisfies $G_{0,0}=\wtG_0\circ\tkappa^{-1}$ in $\Omega$, and the invariance property
\begin{equation}\label{ad.1}
[P(h),G_0^w( x, h D ) ] \equiv 0\ \quad\text{microlocally in }\Omega\,.
\end{equation}
As in \S\ref{cef}, we rescale these weight functions by
\ekv{eq:G_z}
{G\defeq M h \log (1/h)\,G_0,\qquad g\defeq M h \log(1/h)\, g^0\,.
}
Still using the model $hD_{x_n}$, one can easily check the intertwining property
\begin{equation}
\label{eq:GK}
\begin{split}
G^w(x,hD_x;h)\,K(z)&\equiv K(z)\,g^w(x',hD_{x'};h):H(\tSigma)\to H(\Omega')\,,\\
e^{-G^w(x,hD_x;h)/h}\,K(z)&\equiv K(z)\,e^{-g^w(x',hD_{x'};h)/h}:H(\tSigma)\to H(\Omega')\,.
\end{split}
\end{equation}
Using the weights $G$ and $g$ we define the microlocal Hilbert spaces
$H_G(\Omega')$ and $H_{g}(\tSigma)$ by the method of 
\S\ref{cef}. We need to check that the construction of a microlocal solution performed in \S\ref{s:micro-Sigma} 
remains under control with respect to these new norms.
\begin{lem}\label{ad1}
The operators
\[ 
K(z) \; : \; H_{g}( \wSi) \to H_G ( \Omega' ) \,, \quad z \in \DOCh
\]
satisfy the analogue of \eqref{ml.3}. Namely, taking a cutoff $\chi$ jumping from $0$ to $1$ near $\Sigma$ 
as in \S\ref{s:micro-Sigma}, then any $ v_+ \in H_g(\wSi)$ will satisfy
\begin{equation}
\label{eq:pad1}
\langle [(i/h) P,\chi^w]\,K(z)\, v_+ , K(\bar{z})\,v_+ \rangle_{H_G} 
\equiv \| v \|^2_{H_g} \,.
\end{equation}
\end{lem}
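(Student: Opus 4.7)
\medskip

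\noindent
\textbf{Proof plan for Lemma~\ref{ad1}.}

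My plan is to reduce to the model case $P = hD_{x_n}$ via the microlocally unitary Fourier integral operator $U$ of \eqref{eq:U}, where the weight function becomes essentially trivial and the identity follows from the unweighted computation \eqref{ml.2}. Recall that by construction $G_0^w \equiv U\,\wtG_0^w\,U^*$ microlocally in $\Omega$, that $\wtG_0(x,\xi) = g^0(x',\xi')$ inside $\tOmega'$, and that $K(z) = \chi_\Omega^w U\,\mathrm{K}(z)$. The key observation is that in the model, $\wtG_0^w$ acts only in the $(x',\xi')$ variables, and hence commutes \emph{exactly} (not just microlocally) with $hD_{x_n}$ and with multiplication by any function $\chi_0(x_n)$ of $x_n$ alone.

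First I would transfer the statement to the model. Using microlocal unitarity of $U$ on the relevant sets, and writing $\chi^w \equiv U\,\chi_0^w\,U^*$ for a step function $\chi_0(x_n)$ (as in \S\ref{s:micro-Sigma}), the left-hand side of \eqref{eq:pad1} becomes, modulo $\cO(h^\infty)$-errors,
\[
\langle (i/h)[hD_{x_n},\chi_0^w]\,\mathrm{K}(z) v_+, \mathrm{K}(\bar z) v_+\rangle_{H_{\wtG}}\,,
\qquad \wtG \defeq Mh\log(1/h)\,\wtG_0\,.
\]
Since $\wtG_0^w$ acts only on the $(x',\xi')$ variables and $\wtG_0 \equiv g^0$ near the support of $\mathrm{K}(z) v_+$, the exponential splits as
\[
e^{-\wtG^w/h}\,\mathrm{K}(z) v_+ \equiv e^{ix_n z/h}\,e^{-g^w/h}v_+ = \mathrm{K}(z)\,e^{-g^w/h} v_+\,,
\]
and similarly for $\mathrm{K}(\bar z)$; this is precisely the intertwining \eqref{eq:GK} in the model. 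Furthermore, $e^{-\wtG^w/h}$ commutes exactly with $(i/h)[hD_{x_n},\chi_0^w]$, since both $hD_{x_n}$ and $\chi_0^w = \chi_0(x_n)$ depend only on $x_n$, hence commute with any operator in the $(x',\xi')$ variables. Therefore the inner product equals
\[
\langle (i/h)[hD_{x_n},\chi_0^w]\,\mathrm{K}(z)(e^{-g^w/h}v_+), \mathrm{K}(\bar z)(e^{-g^w/h}v_+)\rangle_{L^2}\,,
\]
and the unweighted normalization \eqref{ml.2} gives this as $\|e^{-g^w/h} v_+\|_{L^2}^2 = \|v_+\|_{H_g}^2$, as required.

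The main obstacle is bookkeeping the microlocal remainders in the presence of the logarithmic weight: by \eqref{eq:exG} the operators $e^{\pm G^w/h}$ and $e^{\pm g^w/h}$ are bounded by $h^{-CM}$, so an a priori $\cO(h^\infty)$-error acquires at worst a polynomial loss when conjugated by these exponentials and still remains $\cO(h^\infty)$. Concretely, I need to check that (i) the microlocal identity $G_0^w \equiv U\wtG_0^w U^*$ survives exponentiation, i.e.\ $e^{-G^w/h} \equiv U\,e^{-\wtG^w/h}\,U^*$ in $\Omega$ (which follows from expanding the exponential series as in \eqref{eq:exGex} and applying Egorov, exactly as in the proof of Lemma~\ref{l:TGV}), and (ii) the two occurrences of $U$ produce operators that, after applying $\chi_\Omega^w$, are microlocally the identity on states in $H(\tOmega)$. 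Both points are standard consequences of the microlocal unitarity of $U$ combined with the convention \eqref{eq:not}, so no further difficulty arises beyond tracking constants.
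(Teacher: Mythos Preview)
Your argument is correct and follows essentially the same strategy as the paper's proof: exploit the flow-invariance of $G_0$ near $\Sigma$ (equivalently, that $\wtG_0$ is independent of $(x_n,\xi_n)$ in $\tOmega'$) together with the intertwining \eqref{eq:GK} to reduce to the unweighted identity. The only organizational difference is that you carry the computation out in the model $hD_{x_n}$, where the exponential weight commutes \emph{exactly} with $[hD_{x_n},\chi_0]$ and you then apply \eqref{ml.2}; the paper instead stays in the original coordinates, sets $\chi_G^w\defeq e^{-G^w/h}\chi^w e^{G^w/h}$, uses $P_G\equiv P$ from \eqref{ad.1} to write $[(i/h)P_G,\chi_G]=[(i/h)P,\chi_G]$, observes that $\chi_G$ is again an admissible step cutoff, and invokes \eqref{ml.3} for this new cutoff. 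Your route makes the mechanism more transparent (the weight literally factors through the transverse variables), while the paper's packaging avoids re-transferring through $U$ and instead leans on the already-recorded consequences \eqref{ad.1} and \eqref{eq:GK}.
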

\begin{proof}
From the cutoff $\chi$ we define the deformed symbol $\chi_G$ through
\[ 
\chi_G^w ( x, h D )  \defeq e^{-G^w ( x , h D ) / h } \, \chi^w ( x, h D ) \, e^{G^w ( x, h D) / h} \,.
\]
The symbol calculus of \S\ref{cef} shows that $ \chi_G $ also jumps from $0$ to $1$ near $\Sigma$, so that
(returning to the convention of using $\chi$ for $\chi^w$)
\begin{eqnarray*}
\langle[(i/h)P,\chi ]K(z)v_+,K(\bar{z})v_+ \rangle _{H_G}
& \equiv & \langle e^{-G/h}[(i/h)P,\chi ]K(z)v_+ , e^{-G/h}K(\bar{z}) v_+\rangle_{L^2} \\
& \equiv & \langle K(\bar{z})^* [(i/h)P_G,\chi_G]\,K(z)\, e^{-g/h}\,v_+,e^{-g/h}\,v_+\rangle_{L^2} \\
& \equiv & \langle K(\bar{z})^* [(i/h)P,\chi_G]\,K(z)\, e^{-g/h}\,v_+,e^{-g/h}\,v_+\rangle_{L^2} \\
& \equiv & \Vert e^{-g/h}\,v_+\Vert^2 \equiv \Vert v_+\Vert^2_{H_{g}} \,. 
\end{eqnarray*}
In the second line we used \eqref{eq:GK}, the third line results from $P\equiv P_G$, due to \eqref{ad.1},
and the last one from \eqref{ml.3} applied to $\chi_G$.
\end{proof}

Equation \eqref{eq:GK} shows that, for $z\in \DOCh$, the operators
$K(z)$ and $R_\pm(z)$ defined respectively
in \eqref{ml.4} and \eqref{ml.6}, satisfy the same norm estimates with respect to
the new norms as for the $L^2$ norms:
\begin{align}\label{e:norm_K}
\| K(z)\|_{H_{g}(\tSigma)\to H_G(\Omega)} &= \cO( h^{-M_0
  \eps})\,,\\
\label{e:R_+-G}
\|R_+( z )\|_{H_G(\Omega) \to H_g(\tSigma)}  = \cO( h^{-M_0 \eps})\,,&\qquad 
\|R_-( z)\|_{H_g(\tSigma) \to  H_G(\Omega)} = \cO( h^{-M_0 \eps})\,.
\end{align}
The arguments presented in \S\ref{ml} carry over to the weighted
spaces, and the microlocal solution to the problem \eqref{ml.7} constructed in
\S\ref{s:sol-Gru} satisfies the norm estimates
\be\label{ad.2}
\Vert u\Vert_{H_G}+\Vert u_-\Vert_{H_g} \lesssim h^{-5 M_0 \eps} \big(\Vert v\Vert _{H_G}+ \Vert v_+\Vert_{H_g}\big)\,.
\ee
Given a function $G_{0,0}(x,\xi)$ satisfying $H_pG_{0,0}=0$ in $\Omega$, one can iteratively construct 
a full symbol $G_0$ of the form \eqref{ad.0.2}, such that \eqref{ad.1} holds.
Now, the lower order terms in $G_0$ may change the norms only by
factors $\big(1+\cO(M h\log (1/h))\big)$,
so the same norm estimates hold if we
replace $G_0$ by its principal symbol $G_{0,0}$ in the definition of the new norms. As a result, we get the following 
\begin{prop}\label{p:G_0}
Take $g^0\in \CIc(T^*\IR^{n-1})$, $\wtG_0(x',x_n,\xi',\xi_n)= g^0(x',\xi')$, $G_0\in \CIc(X)$ satisfying $G_0=\wtG_0\circ\tkappa$ in $\Omega$, and
$$
G=Mh\log(1/h)\,G_0,\qquad g = Mh\log(1/h)\,\wtG_0\,.
$$
Then, the estimates (\ref{e:norm_K}--\ref{ad.2}) hold in the spaces $H_G(\Omega)$, $H_{g}(\tSigma)$.
\end{prop}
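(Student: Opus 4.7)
The strategy is to reduce the claim to the setting of Lemma \ref{ad1}, which was established under the assumption that $G_0$ is the principal term of a \emph{full} symbol constructed so as to enforce the microlocal commutation \eqref{ad.1}, and then to check that restricting to that principal term alone changes the $H_G$-norm only by a factor $1+\cO(h\log(1/h))$.

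First I would construct an auxiliary full symbol $\tilde G_0\sim G_{0,0}+hG_{0,1}+h^2G_{0,2}+\cdots$, with principal term $G_{0,0}=G_0$ (so that $G_{0,0}=\wtG_0\circ\tkappa^{-1}$ in $\Omega$), such that $[P,\tilde G_0^w]\equiv 0$ microlocally in $\Omega$. Conjugating by the Fourier integral operator $U$ from \eqref{eq:U}, this reduces to constructing a symbol on $\tOmega$ that commutes microlocally with $hD_{x_n}$. The principal equation $\partial_{x_n}\wtG_0=0$ is automatic, since $\wtG_0(x',x_n,\xi',\xi_n)=g^0(x',\xi')$ is independent of $(x_n,\xi_n)$ by construction; each subsequent obstruction is a pseudodifferential commutator of order $h^{1+j}$ whose full symbol is a right-hand side $f_j$, and solving the transport equation $\partial_{x_n}(G_{0,j+1}\circ\tkappa)=f_j$ by integration in $x_n$ produces $G_{0,j+1}\in S(T^*X)$. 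Setting $\tilde G\defeq Mh\log(1/h)\tilde G_0$ and $g$ as in the proposition, Lemma \ref{ad1} and the arguments of \S\ref{ml} give the estimates (\ref{e:norm_K}--\ref{ad.2}) in the spaces $H_{\tilde G}(\Omega)$ and $H_g(\wSi)$.

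Next I would compare the two weights. Since $\tilde G_0-G_0\in hS(T^*X)$, we have
$$
\frac{\tilde G-G}{h}=M\log(1/h)\,(\tilde G_0-G_0)\in Mh\log(1/h)\,S(T^*X).
$$
By the pseudodifferential calculus of \S\ref{cef}, and in particular the expansion \eqref{eq:exGex} applied to the small weight $(\tilde G-G)$, the operator $\exp((\tilde G-G)^w/h)$ is pseudodifferential with full symbol of the form $1+\cO(h\log(1/h))$ in $S(T^*X)$, giving
$$
\exp\bigl((\tilde G-G)^w/h\bigr)=I+\cO_{L^2\to L^2}(h\log(1/h)).
$$
This yields the norm equivalence $\|u\|_{H_G}=(1+\cO(h\log(1/h)))\|u\|_{H_{\tilde G}}$, which transfers the estimates from $H_{\tilde G}$ to $H_G$ up to the $\cO(1)$ multiplicative constants absorbed in the $\lesssim$ convention \eqref{eq:not}.

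The main technical subtlety is that $e^{\pm G^w/h}$ and $e^{\pm\tilde G^w/h}$ are individually \emph{not} $\cO(1)$ on $L^2$ — by \eqref{eq:exG} each sits in $\Psi^{0,C_0M}_\delta$ — so the comparison must be carried out at the level of their ratio $\exp((\tilde G-G)^w/h)$, where the factor $1/h$ is compensated by the extra power of $h$ in $\tilde G_0-G_0$. Once this observation is made, the remaining norm equivalence is a direct application of the calculus already invoked in \S\ref{cef}.
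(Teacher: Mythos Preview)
Your proposal is correct and follows essentially the same route as the paper. The paper's proof is the short paragraph immediately preceding the proposition: starting from the principal symbol $G_{0,0}=G_0$ one builds a full symbol $\tilde G_0\sim\sum h^jG_{0,j}$ satisfying the microlocal commutation \eqref{ad.1}, applies the estimates already obtained for that full symbol, and then observes that the lower-order terms change the $H_G$-norm only by factors $1+\cO(Mh\log(1/h))$. One minor remark: the paper actually produces $\tilde G_0$ in one stroke via Egorov, $\tilde G_0^w\equiv U(\wtG_0)^wU^*$, rather than by your iterative transport-equation scheme, but the two constructions agree modulo $\cO(h^\infty)$ and both are valid.
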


\subsection{Globally defined operators and finite rank weighted spaces}
\label{frR}

In this section we transform our microlocal Grushin problem into a globally defined one. This will require transforming all
the microlocally defined operators ($R_{\pm}(z)$, $\cM_{jk}(z)$) into globally defined operators acting on $L^2(X)$
or $L^2(\RR^{n-1})$. Because our analysis took place near the trapped set $K_0$, we will need to restrict our auxiliary operators
to some subspaces of $L^2(\RR^{n-1})$ obtained as images of some finite rank projectors. These subspaces are composed of
functions microlocalized near $K_0$. 
To show that the resulting Grushin problem is well-posed (invertible), the above construction must be performed using 
appropriately deformed norms on the spaces $L^2(X)$, $L^2(\RR^{n-1})$, 
obtained by using globally defined weight functions $G$, $g_{j}$. Our first task is thus to complete the constructions of
these global weights, building on \S\ref{eft} and \S\ref{ewn}.

\subsubsection{Global weight functions}
We will now construct global weight functions $G\in \CIc(X)$, $g_{j}\in \CIc(T^*\RR^{n-1})$ (one for each section $\Sigma_j$). 
For this, we will use the construction of an escape function away from $K_0$ presented in \S\ref{eft}, and modify it 
near the Poincar\'e section so that it takes the form required in Proposition \ref{p:G_0}, and allows us to
define auxiliary escape functions $g_{j}$. These
weight functions will allow us to to define {\em finite rank} realizations of the microlocally 
defined operators $R_\pm ( z )$ and $\cM(z)$.

Our escape function $G_0\in S(T^*X)$ is obtained through a slight modification of the weight 
$ G_2(x,\xi) $ described in \eqref{eq:gsa}. The modification only takes place near the trapped set $\whK$, and
in particular near the sections $\Sigma_j$. The following lemma is easy to verify.
\begin{lem}
\label{l:G0}
Let $\{\Omega_j,\}_{j=1,\ldots,K}$ be the neighbourhoods of $\Sigma_j$ described in \S\ref{s:normal},
$\Omega_j'$ and $\Omega''_j$ be small neighbourhoods of $\Omega_j$, $\Omega_j\Subset \Omega_j'\Subset \Omega_j''$,
and let $ V $ be a small neighbourhood of $ \whK_\delta $ (see \eqref{eq:Khd}).
Then there exists $ G_0 \in \CIc(T^*X )$ such that
\be
\label{eq:gs0} 
\begin{split}
& H_p G_0 \geq 1  \ \  \text{ on }\ \  
 T^*_{B(0,3R )} X \cap p^{-1}([-\delta,\delta]) \setminus W,\quad  W\defeq V \cup \bigcup_{j=1}^N \Omega''_j\,, 
\\ &H_p G_0 =  0 \ \ \text{ on\ \  $ \Omega_j' $,} 
\\
& H_p G_0 \geq 0  \quad \text{on \ \  } T^*_{B(0,3R )}  X ,  \\
& H_p G_0   \geq -\delta_0  \quad \text{ on $ T^*X $.} 
\end{split}
\ee
Besides, using the coordinate charts $\tkappa_j:\tOmega_j'\to\Omega_j'$ (see \S\ref{s:normal}), 
we can construct $G_0$ such that $G_0\circ \tkappa_j\rest\tOmega_j'$ is 
independent of the energy variable $\xi_n\in[-\delta,\delta]$.
\end{lem}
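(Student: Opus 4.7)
The plan is to modify the escape function $G_2$ from \eqref{eq:gsa} only in small tubular neighborhoods of the sections $\Sigma_j$, replacing it there with a function that is constant along the flow and independent of the energy variable in the normal form coordinates. The existing properties of $G_2$ outside $W$ will then be inherited for free, and the issue is to check that the modification preserves $H_p G_0 \geq 0$ inside $T^*_{B(0,3R)}X$.

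Concretely, I would work in the coordinates $(x',x_n;\xi',\xi_n)$ on $\tOmega''_j$ provided by $\tkappa_j$, in which $p = \xi_n$ and $H_p = \partial_{x_n}$. Define
\[
g_j^0(x',\xi') \defeq G_2\circ \tkappa_j(x',0,\xi',0),\qquad
\tG_j \defeq g_j^0\circ \pi_j \circ \tkappa_j^{-1},
\]
where $\pi_j(x',x_n,\xi',\xi_n)=(x',\xi')$. Pick a cutoff $\chi_j\in \CIc(T^*X)$ which in the chart $\tkappa_j$ depends only on $x_n$, equals $1$ for $|x_n|\leq \eps_0$ and is supported in $|x_n|\leq \eps_1$, with $\eps_0<\eps_1$ chosen so that $\Omega'_j$ corresponds to $\{|x_n|\leq\eps_0\}$ inside $\Omega''_j$. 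Then set
\[
G_0 \defeq G_2 + \sum_{j=1}^N \chi_j\,(\tG_j - G_2).
\]
By construction $G_0\circ\tkappa_j = g_j^0(x',\xi')$ on $\tOmega'_j$ (since $\chi_j=1$ there), which is independent of both $x_n$ and $\xi_n$, giving the last statement and $H_pG_0=0$ on $\Omega'_j$. Outside $\bigcup_j \Omega''_j$ we have $G_0=G_2$, so the strict estimate $H_pG_0\geq 1$ on $T^*_{B(0,3R)}X\cap p^{-1}([-\delta,\delta])\setminus W$ and the global bound $H_pG_0\geq -\delta_0$ follow immediately from \eqref{eq:gsa}.

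The only nontrivial verification is $H_pG_0\geq 0$ on $T^*_{B(0,3R)}X$, which reduces to an analysis on the transition strips $\{\eps_0<|x_n|<\eps_1\}\cap \tOmega''_j$. Using $H_p\tG_j=0$ there, we get
\[
H_p G_0 = (1-\textstyle\sum_j\chi_j)\,H_p G_2 \;+\; \sum_j (H_p\chi_j)(\tG_j - G_2),
\]
and since $H_p G_2\geq 0$ inside $T^*_{B(0,3R)}X$ by \eqref{eq:gsa}, it suffices to control the sign of each $(H_p\chi_j)(\tG_j-G_2)$. In the chart, $H_p\chi_j=\partial_{x_n}\chi_j$ is nonnegative on the incoming strip $-\eps_1<x_n<-\eps_0$ and nonpositive on the outgoing strip $\eps_0<x_n<\eps_1$; the main obstacle is precisely to arrange that $\tG_j-G_2\geq 0$ on the former and $\leq 0$ on the latter. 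Because $G_2$ is nondecreasing in $x_n$ at fixed $\xi_n$ (by $H_pG_2\geq 0$) and $\tG_j$ equals its value at $(x_n,\xi_n)=(0,0)$, the desired inequalities hold up to an error $\mathcal{O}(|\xi_n|)$ coming from variation in the energy variable; shrinking $\delta$ and ensuring that $H_pG_2$ is strictly positive on the transition strips (a harmless further modification of $G_2$ giving an extra multiplicative constant there) absorbs this error. This is the step that dictates how small $\delta$ must be relative to the geometry of the sections; all the other verifications are routine.
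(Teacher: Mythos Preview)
The paper does not actually prove this lemma; it states only that it is ``easy to verify'' and moves on. So there is no argument in the paper to compare against, and your construction---start from the escape function $G_2$ of \eqref{eq:gsa} and flatten it along the flow near each $\Sigma_j$ via the normal-form chart---is exactly the natural one.

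Your argument is essentially correct, but the absorption of the $\mathcal{O}(|\xi_n|)$ error is more delicate than you let on. You propose to make $H_pG_2$ strictly positive on the transition strips; near the reduced trapped set $\TT_j$ this is impossible, since $G_2$ vanishes identically on a neighbourhood $U$ of $\whK$ (this is built into the construction \eqref{eq:gsa0}), so $H_pG_2=0$ there. What saves you is that the bad term $G_2(x',0,\xi',0)-G_2(x',0,\xi',\xi_n)$ also vanishes whenever $(x',\xi')$ is close enough to $\TT_j$ that both points lie in $U$. Away from $\TT_j$ one can arrange $H_pG_2\geq c>0$ on the strips (by taking the set $V$ in \eqref{eq:gsa} small), and then on the incoming strip the good contribution $(\partial_{x_n}\chi_j)\int_{x_n}^{0}H_pG_2\,ds\geq(\partial_{x_n}\chi_j)\,c\,\eps_0$ dominates the bad one $(\partial_{x_n}\chi_j)\,\mathcal{O}(\delta)$ once $\delta\ll c\,\eps_0$. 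The passage between these two regimes needs a short compactness or Lipschitz argument (both the bad term and the deficit in the good term vanish on $U$, and the latter controls the former once one leaves $U$). This is routine, but ``a harmless further modification of $G_2$'' glosses over precisely the region where no such modification is available.

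One minor point: your cutoff $\chi_j$ cannot literally depend only on $x_n$ in the chart and still lie in $\CIc(T^*X)$; you need an additional factor in $(x',\xi',\xi_n)$ supported in $\tOmega''_j$ and equal to $1$ on $\tOmega'_j$. Since $H_p=\partial_{x_n}$ annihilates that factor this is harmless for your sign analysis, but it should be said.
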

The last assumption (local independence on $\xi_n$) is not strictly
necessary, but it simplifies our construction below,
making the auxiliary functions $g_j$ independent of $z$ --- see Proposition \ref{p:G_0}.

For the set $V$ we assume that $V\Subset V_1$, where $V_1$ is the set
defined in \eqref{eq:V_1}. As a consequence, there exists a set
$V'_1$, with  $V\Subset V'_1\Subset V_1$ with the following property.
Consider the the parametrix
$E(z)$ \eqref{ml.9} with the time $T=t_{\max}+5\eps$. Then there exists $t_1>0$ such that, for any
$\rho\in p^{-1}([-\delta,\delta])\setminus V'_1$, the trajectory
segment $\{\Phi^t(\rho),\ 0\leq t\leq T\}$ spends a time
$t\geq t_1$ {\em outside of} $W$.
The main consequence of this property is a strict increase of the
weight along the flow outside $V'_1$:
\be\label{e:V-V_1}
\forall \rho\in T^*_{B(0,2R )}X\cap p^{-1}([-\delta,\delta])\setminus V'_1\,,\qquad G_0(\Phi^T(\rho))-G_0(\rho)\geq t_1\,.
\ee
(Here we use the fact that $T$ is small enough, so that a particle of energy $z\approx 0$ starting inside 
$ T^*_{B(0,2R )}$ at $t=0$
will remain inside $T^*_{B(0,3R )}$ up to $t=T$.)
The set $V$ will be further characterized in the next subsection.

From now on, we will take for weight function $G=Mh\log h\,G_0$ with such a function $G_0$, and use it
to define a global Hilbert norm $\|\bullet\|_{H^k_G(X)}$ as in \eqref{e:deformed-norms}.
As in Proposition \ref{p:G_0}, 
we define, for each $j=1,\ldots,N$, the auxiliary weight
\be\label{e:G^j}
g_j(x',\xi')\defeq M h \log (1/h)\, 
G_0\circ\tkappa_j (x',0,\xi ',0) \,, \quad ( x' , \xi') \in \tSigma_j\,, 
\ee
and extend it to an element of $\CIc ( T^* \RR^{(n-1)})$,
so that the deformed Hilbert norm 
$$
\| v \|_{H_{g_j}} = \| e^{-g_j^w(x',hD_x')/h}\, v\|_{L^2(\RR^{n-1})}
$$ 
is globally well-defined. 
Proposition \ref{p:G_0} shows that our microlocal construction near $\Sigma_j$ satisfies
nice norm estimates with respect to the spaces $H_G(X)$, $H_{g_j}$.


To see the advantages of having weights which are escape functions 
we state the following lemma which results from applying
Lemma \ref{l:TGV} to the Fourier integral operator  $ \exp ( - i t P / h ) $:
\begin{lem}\label{l:G}
Suppose that 
$ \rho_1 =\Phi^{t} (\rho _0) $ for some $t>0$, 
and that 
$$ \Delta \defeq G_0(\rho _1) - G_0(\rho _0) > 0\,. $$
Suppose also that $\chi _j\in \CIc (T^*X)$,
$j=0,1$, have their supports in small neighbourhoods of $\rho _j$'s. 
Then for $h$ small enough we have
\begin{equation}
\label{eq:lG}
\|e^{-itP/h}\,\chi^w_0\|_{{H_G} \rightarrow {H_G}} \le h^{M\Delta/2}\,, \qquad \|\chi^w_1\,e^{-itP/h}\|_{{H_G} \rightarrow {H_G}} \le h^{M\Delta/2}\,.
\end{equation} 
\end{lem}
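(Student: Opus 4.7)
The plan is to apply Lemma \ref{l:TGV} to the unitary $h$-Fourier integral operator $e^{-itP/h}$, which is associated with the symplectomorphism $\Phi^t$. For the first inequality, I will fix a small open neighbourhood $V_0$ of $\rho_0$ containing $\supp\chi_0$; then $\Phi^t(V_0)$ is a small neighbourhood of $\rho_1$. Lemma \ref{l:TGV} applied with $\kappa=\Phi^t$ then yields the uniform bound
\[
\|e^{-itP/h}\|_{H_{(\Phi^t)^*G}(V_0)\,\to\,H_G(\Phi^t(V_0))}=\cO(1)\,.
\]

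The crux of the argument is to compare the two weighted norms on states microlocalized near $\rho_0$. On $\supp\chi_0$, the difference of symbols $(\Phi^t)^*G-G=Mh\log(1/h)(G_0\circ\Phi^t-G_0)$ equals $Mh\log(1/h)\Delta$ at $\rho_0$ and varies by at most $Mh\log(1/h)\cdot\cO(\mathrm{diam}\,\supp\chi_0)$. Using the symbolic expansion \eqref{eq:exGex} for the conjugation $e^{-G^w/h}(\cdot)e^{G^w/h}$ together with the operator bound \eqref{eq:exG}, I expect these symbolic variations to contribute only multiplicative factors of the form $h^{-\eta}$ for any preassigned $\eta>0$, provided $\supp\chi_0$ is taken sufficiently small. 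This yields the norm comparison
\[
\|\chi_0^wu\|_{H_{(\Phi^t)^*G}}\le C\,h^{M\Delta-\eta}\,\|\chi_0^wu\|_{H_G}\,,
\]
and combining with the previous display plus the $H_G$-boundedness of $\chi_0^w$ (which follows from \eqref{eq:exGex}) gives $\|e^{-itP/h}\chi_0^wu\|_{H_G}\le C'h^{M\Delta-\eta}\|u\|_{H_G}$. Choosing $\eta<M\Delta/2$ and $h$ small enough to absorb $C'$ produces the desired bound $h^{M\Delta/2}$.

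For the second inequality I plan to reduce to the first via Egorov's theorem. Since $e^{itP/h}\chi_1^we^{-itP/h}$ is microlocally a pseudodifferential operator with principal symbol $\chi_1\circ\Phi^t$, supported in a small neighbourhood of $\Phi^{-t}(\rho_1)=\rho_0$, one may write $\chi_1^we^{-itP/h}\equiv e^{-itP/h}\widetilde\chi_0^w$ with $\widetilde\chi_0$ supported in a small neighbourhood of $\rho_0$, up to terms negligible on $H_G$. The first estimate then applies to the right-hand side.

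The main obstacle I anticipate is the norm comparison step: because the weight $G$ is itself of size $h\log(1/h)$, the subprincipal and remainder contributions appearing in the conjugation formula \eqref{eq:exGex} yield multiplicative factors only polynomial in $h$. The factor $\tfrac12$ in the bound $h^{M\Delta/2}$ is precisely the safety margin needed to absorb these contributions, once $\supp\chi_j$ and $h$ are both chosen sufficiently small.
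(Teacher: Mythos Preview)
Your proposal is correct and follows precisely the route indicated by the paper, which offers only the one-line justification that the lemma ``results from applying Lemma~\ref{l:TGV} to the Fourier integral operator $\exp(-itP/h)$.'' You have supplied the two ingredients implicit in that sentence: the bound $\|e^{-itP/h}\|_{H_{(\Phi^t)^*G}(V_0)\to H_G(\Phi^t(V_0))}=\cO(1)$ from Lemma~\ref{l:TGV}, and the norm comparison $\|\chi_0^w u\|_{H_{(\Phi^t)^*G}}\le Ch^{M\Delta-\eta}\|u\|_{H_G}$ coming from the fact that $(\Phi^t)^*G_0-G_0\ge \Delta-\cO(\mathrm{diam}\,\supp\chi_0)$ on $\supp\chi_0$, together with the symbolic calculus \eqref{eq:exGex}--\eqref{eq:exG}. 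Your reduction of the second inequality to the first via Egorov's theorem is also the natural move.
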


\subsubsection{Finite dimensional projections}\label{s:finite-dim}
We want to construct a {\em finite dimensional} subspace 
of the Hilbert space $ H_{g_j } ( \RR^{n-1} ) $, such that the microlocal 
spaces $ H_{g_j} ( V_\pm^j ) $ are both
approximated by it modulo $ \cO ( h^\infty ) $. 

For each $j=1,\ldots,N$, let $S'_j, S_j$ be two families of open sets
with smooth boundaries in $T^*\RR^{n-1}$, satisfying
\be\label{e:S_j}
\tkappa_j^{-1}(\widehat{\TT}_j) \Subset S'_j \Subset S_j \subset \tD_j  \,,\quad j=1,\ldots,N\,.
\ee
In particular, each $S_j$, $S'_j$ splits into disjoint components $S_{kj}'\Subset S_{kj}\subset \tD_{kj}$.

Once these sets are chosen, we need to adjust the set $V$ in
Lemma~\ref{l:G0}, making it thinner if necessary:
\begin{lem}\label{l:t_0}
For $\delta>0$ small enough, there exists 
$V=\neigh(\whK_\delta,V_1)$ and $t_0>0$ such that the following property holds. 

For any indices $j=1,\ldots,N$, $k\in J_+(j)$, any $z\in [-\delta,\delta]$ and any point 
$\rho\in \tD_{kj}\cap S_j$ such that its successor 
$\tF_{kj,z}(\rho)$ does not belong to $S'_k$, then the trajectory
between $\kappa_{j,z}(\rho)$ and $F_{kj,z}(\kappa_{j,z}(\rho))$
spends a time $t\geq t_{0}$ outside of $W=V \cup \bigcup_{j=1}^N \Omega''_j$.
\end{lem}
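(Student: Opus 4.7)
The plan is to argue by contradiction, leveraging compactness of $\overline{S_j}$, closedness and flow-invariance of the trapped set, and joint continuity of the Poincar\'e map in $(z,\rho)$. If no such pair $(V,t_0)$ existed, then fixing a decreasing basis of open neighbourhoods $V_n\searrow \whK_\delta$ with $\bigcap_n\overline{V_n}=\whK_\delta$ and a sequence $t_n\downarrow 0$, one could (after extracting so the pair $j,k\in J_+(j)$ is constant) find $z_n\to z_\infty\in[-\delta,\delta]$, points $\rho_n\to\rho_\infty\in\overline{S_j}$ with $\tF_{kj,z_n}(\rho_n)\notin S'_k$, and trajectory segments $\gamma_n(s)\defeq\Phi^s(\kappa_{j,z_n}(\rho_n))$, $s\in[0,T_n]$, $T_n=t_+(\kappa_{j,z_n}(\rho_n))\leq t_{\max}$, which each spend total time at most $t_n$ outside $W_n=V_n\cup\bigcup_\ell\Omega''_\ell$. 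By continuity of the flow and of the transverse return time, $T_n\to T_\infty$ and $\gamma_n\to\gamma_\infty\defeq \Phi^\bullet(\kappa_{j,z_\infty}(\rho_\infty))$ uniformly on $[0,T_\infty]$.

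The heart of the argument is to deduce $\gamma_\infty([0,T_\infty])\subset K_{z_\infty}$. For any $s_*\in[0,T_\infty]$ and $\epsilon>0$, the measure bound on $\{s:\gamma_n(s)\notin W_n\}$ produces, for $n$ large, a point $s_n^*\in[s_*-\epsilon,s_*+\epsilon]$ with $\gamma_n(s_n^*)\in W_n$; after a further extraction, either $\gamma_n(s_n^*)\in V_n$ for all $n$, or $\gamma_n(s_n^*)\in \Omega''_\ell$ for some fixed $\ell$. In the first case, $V_n\subset V_m$ for $n\geq m$ combined with $\bigcap_m\overline{V_m}=\whK_\delta$ gives $\gamma_\infty(s^{**})\in\whK_\delta$ for some $s^{**}\in[s_*-\epsilon,s_*+\epsilon]$; in the second, $\gamma_\infty(s^{**})\in\overline{\Omega''_\ell}$. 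Letting $\epsilon\downarrow 0$ and using continuity of $\gamma_\infty$ together with the closedness of $\whK_\delta\cup\bigcup_\ell\overline{\Omega''_\ell}$ yields $\gamma_\infty(s_*)\in\whK_\delta\cup\bigcup_\ell\overline{\Omega''_\ell}$. Since $j\neq k$ by \eqref{eq:H4} and the $\overline{\Omega''_\ell}$ may be chosen pairwise disjoint, $\gamma_\infty$ traverses an open time interval lying outside $\bigcup_\ell\overline{\Omega''_\ell}$, on which it belongs to $\whK_\delta\cap p^{-1}(z_\infty)=K_{z_\infty}$; flow-invariance of $K_{z_\infty}$ then propagates this inclusion to the entire trajectory.

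The contradiction is then immediate: the endpoints of $\gamma_\infty$ lie in $\TT_j(z_\infty)$ and $\TT_k(z_\infty)$, so $\tF_{kj,z_\infty}(\rho_\infty)\in\tkappa_k^{-1}(\widehat{\TT}_k)\subset S'_k$ by \eqref{e:S_j}, while joint smoothness of the Poincar\'e map in $(z,\rho)$ (\S\ref{s:normal}) gives $\tF_{kj,z_n}(\rho_n)\to\tF_{kj,z_\infty}(\rho_\infty)$ with each $\tF_{kj,z_n}(\rho_n)$ in the closed set $\tSigma_k\setminus S'_k$, forcing the limit outside $S'_k$. The delicate step is the measure-theoretic identification of $\gamma_\infty$ as lying in $\whK_\delta\cup\bigcup_\ell\overline{\Omega''_\ell}$ and the ensuing promotion to $K_{z_\infty}$ via flow-invariance; the smallness of $\delta$ enters through Lemma~\ref{l:continu} (stability of $K_z$ under energy perturbations, ensuring that $\TT_k(z_\infty)$ is captured by $\tkappa_k^{-1}(\widehat\TT_k)$) and through the uniform return-time bound $t_{\max}$.
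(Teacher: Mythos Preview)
The paper does not supply a proof of this lemma; it merely states it and remarks afterward that $t_0<t_{\max}$ and that $t_0$ grows as $\eps$ shrinks. Your compactness/contradiction argument is a correct way to fill this gap. The logic --- extracting a limit trajectory $\gamma_\infty$, using the measure bound $|\{s:\gamma_n(s)\notin W_n\}|\leq t_n\to 0$ together with $\bigcap_m\overline{V_m}=\whK_\delta$ to force $\gamma_\infty([0,T_\infty])\subset \whK_\delta\cup\bigcup_\ell\overline{\Omega''_\ell}$, then using disjointness of the $\overline{\Omega''_\ell}$ (granted by \eqref{eq:H4} and the smallness of the $\Omega''_\ell$) and connectedness of $\gamma_\infty$ to find a point in $K_{z_\infty}$, and finally propagating by flow-invariance --- is sound. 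The contradiction via \eqref{e:S_j} and openness of $S'_k$ is clean.

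One small point worth making explicit: you invoke continuity of the return time $t_+$ at $\rho_\infty$ before you know $\rho_\infty$ lies in the open domain $\tD_{kj}$. It is safer to first extract $T_n\to T_\infty\in[0,t_{\max}]$ by boundedness alone, observe that $\gamma_n(T_n)\in\Sigma_k(z_n)$ forces (by continuity of $\tkappa_k$ and compactness of $\overline{\tA_{kj}(z_n)}$) the limit $\gamma_\infty(T_\infty)\in\Sigma_k(z_\infty)$, and only \emph{after} establishing $\gamma_\infty\subset K_{z_\infty}$ conclude that $\rho_\infty\in\kappa_{j,z_\infty}^{-1}(\TT_j(z_\infty))\Subset S'_j\subset\tD_{kj}$, whence the return time is indeed continuous there and $T_\infty=t_+(\kappa_{j,z_\infty}(\rho_\infty))$. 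This avoids any circularity.
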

The time $t_0$ is necessarily smaller than the maximal
return time $t_{\max}$ of \eqref{e:t_max}; on
the other hand, $t_0$ increases if we decrease the width $\sim\eps$ of
the sets $\Omega''_j$.
See figure~\ref{f:setV} for a sketch.
\begin{figure}\begin{center}
\includegraphics[angle=-00,width=.85\textwidth]{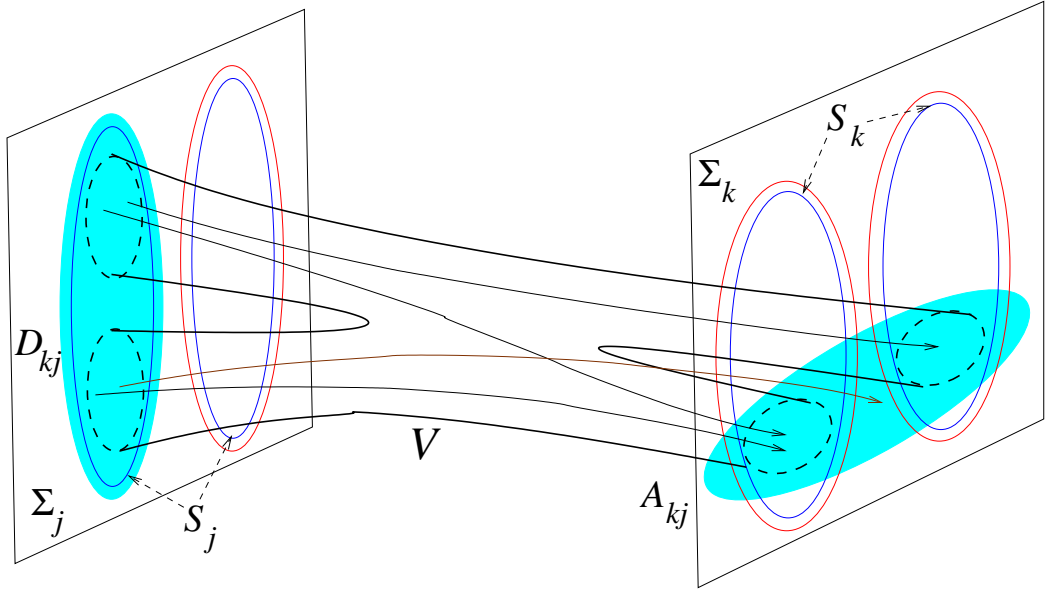}
\caption{\label{f:setV} Schematic representation (inside some energy layer $p^{-1}(z)$) 
of the neighbourhood $V$ and the sets $S_k$, $S_j$. 
The departure/arrival sets $D_{kj}$, $A_{kj}$ are similar to the ones
appearing in figure~\ref{fig:A-D}. The sets $S_{k}$, $S_{j}$ are
represented through their images in $\Sigma_k,\,\Sigma_j$ through
$\kappa_{k,z},\,\kappa_{j,z}$. We showe $3$ trajectories staying
inside $V$ all the time, and one ending outside $S_k$.}
\end{center}\end{figure}
Now, let
$$ 
Q_j=Q_j(x',\xi';h)\in S ( T^* \RR^{n-1})\,, 
$$ 
with leading symbol $q_j$ independent of $h$ (the leading symbol is meant in the
sense of \eqref{eq:leads}). We choose that leading
symbol to be real and have the following properties:
\begin{equation}
\label{gl.2.5}
\begin{split}
& q_j(\rho )<0\,,\ \ \ \rho \in S_j\,,\\
& q_j(\rho )>0\,,\ \ \ \rho \in T^* \RR^{n-1}\setminus \overline{S}_j\,,
 \quad\liminf_{\rho \to \infty}q_j(\rho )>0.
\end{split}
\end{equation}
Lemma \ref{l:saG} shows that one can choose $ Q_j $ so that
$$ 
Q_j^w ( x' , h D_{x'} ) \; : \; H_{g_j} ( \RR^{n-1} ) 
\longrightarrow H_{g_j} ( \RR^{n-1} ) \quad\text{is self-adjoint}.
$$
Under the assumptions \eqref{gl.2.5}, 
we know that $Q_j$ has discrete spectrum in a
fixed neighbourhood of $\RR_-$ when $h>0$ is
small enough. Let 
\be\label{gl.3}
\cH_j \defeq \Pi_j \big(H_{g_j} ( \RR^{n-1} )\big)\,,
\ \text{ where $\Pi_j \defeq \bbbone_{\RR_-}\big(Q^w_j( x', h D_{x'})\big)$\,, } 
\ee
that is, $\Pi_j$ is the spectral projection
corresponding to the negative spectrum of $Q_j^w$.
In particular, 
\be\label{e:projection}
\|\Pi_j\|_{H_{g_j}\to H_{g_j}}=1,\qquad \dim(\cH_j) \sim c_j\,h^{1-n} \,,\quad c_j>0\,. 
\ee
We group together these projectors in a diagonal matrix $\Pi_h\defeq
{\rm diag}(\Pi_1,\ldots,\Pi_N)$ projecting $
H_{g_1}(\RR^{n-1})\times\cdots H_{g_N}(\RR^{n-1})$ onto $\cH\defeq \cH_1\times\cdots\cH_N$.

The space $\cH_j$ will be equipped with the norm $\|\bullet\|_{H_{g_j}}$.
For future reference we record the following lemma based
on functional calculus of pseudodifferential operators 
(see for instance \cite[Chapter 7]{DiSj}):
\begin{lem}
\label{l:func}
For any uniformly bounded family of states $u=(u(h) \in L^2(\RR^{n-1}))_{h\to 0}$,
\[ \WFh ( u ) \Subset S_j \ 
\Longrightarrow  \| u - \Pi_j u \|_{ H_{g_j} } = \cO(h^\infty) \| u \|_{H_{g_j}} \,. 
\]
\end{lem}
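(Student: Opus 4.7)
The plan is to apply the Helffer--Sj\"ostrand functional calculus to the self--adjoint operator $ Q_j^w $ and then use the basic definition of the semiclassical wavefront set.

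Since $ \WFh ( u ) \Subset S_j $, the sign condition \eqref{gl.2.5} combined with compactness produces $ \delta_0 > 0 $ and an open neighbourhood $ N $ of $ \WFh ( u ) $ on which $ q_j \leq - \delta_0 $. I would then pick a smooth cutoff $ \psi_+ \in \CI ( \RR ; [0,1] ) $ with $ \psi_+ \equiv 0 $ on $ ( - \infty , - \delta_0 / 2 ] $ and $ \psi_+ \equiv 1 $ on $ [ - \delta_0 / 4 , \infty ) $. Because $ \psi_+ \equiv 1 $ on the support of $ \bbbone_{[ 0 , \infty ) } $, the spectral theorem applied to $ Q_j^w $, viewed as a bounded self-adjoint operator on the Hilbert space $ H_{g_j} $, yields the operator identity
\[
I - \Pi_j = ( I - \Pi_j ) \, \psi_+ ( Q_j^w ) \,,
\]
which, combined with $ \| I - \Pi_j \|_{H_{g_j} \to H_{g_j} } \leq 1 $ from \eqref{e:projection}, gives
\[
\| u - \Pi_j u \|_{H_{g_j}} \leq \| \psi_+ ( Q_j^w ) u \|_{H_{g_j}} \,.
\]

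Next I would identify $ \psi_+ ( Q_j^w ) $ as a pseudodifferential operator with leading symbol $ \psi_+ ( q_j ) $ via the Helffer--Sj\"ostrand formula. Since that formula is most transparent on $ L^2 $, I would first conjugate by $ e^{ g_j^w / h } $, reducing to an operator self-adjoint on $ L^2 $ with the same leading symbol (as in the proof of Lemma \ref{l:saG}), and then invoke the standard calculus of \cite[Chapter 8]{DiSj}. By construction, $ \psi_+ ( q_j ) $ vanishes identically on the open set $ N \supset \WFh ( u ) $, so the definition of $ \WFh $ in \eqref{eq:defWF}, together with the fact that the $ L^2 $ and $ H_{g_j} $ norms differ only by factors of the form $ h^{-CM} $ (by \eqref{eq:exG}), yields
\[
\| \psi_+ ( Q_j^w ) u \|_{H_{g_j}} = \cO ( h^\infty ) \| u \|_{H_{g_j}} \,,
\]
which is the conclusion.

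The hard part will be the verification that $ \psi_+ ( Q_j^w ) $ actually belongs to the slightly exotic pseudodifferential class adapted to the weight $ g_j $, with leading symbol $ \psi_+ ( q_j ) $ in the sense of \eqref{eq:leads}. Once this is handled by the conjugation reduction together with the remainder analysis of \S \ref{cef}, the remaining implication only uses the definition of the wavefront set and is routine.
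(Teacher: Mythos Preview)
Your proposal is correct and is precisely the argument the paper has in mind: the lemma is stated there without proof, merely as a consequence of ``functional calculus of pseudodifferential operators (see for instance \cite[Chapter 7]{DiSj})'', and your Helffer--Sj\"ostrand reduction, conjugation to $L^2$ via $e^{-g_j^w/h}$, and use of the vanishing of $\psi_+(q_j)$ near $\WFh(u)$ is exactly how one unpacks that reference. The only point worth tightening is the final step: rather than invoking \eqref{eq:defWF} directly, it is cleaner to insert a cutoff $\chi\in\CIc(N)$ with $\chi^w u\equiv u$ and observe that $\psi_+(Q_j^w)\chi^w=\cO_{L^2\to L^2}(h^\infty)$ by the disjoint--support symbolic calculus in $\Psi_\delta$, after which the polynomial equivalence of the $L^2$ and $H_{g_j}$ norms finishes the job.
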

 
\medskip

In \S\ref{ewn} 
we used the microlocally defined operators
\[  
R_+^j(z): H_G( \Omega_j )  \to H_{g_j} ( \tSigma_j) \,. 
\]
Renaming them $R_{+,m}^j(z)$ (where $ m $ stands for {\em microlocal}) we now
define
\be
\label{eq:R+nf} R_+^j(z) \defeq \Pi_j\, R_{+,m}^j: H_G(X) \to \cH_j\,. 
\ee
The estimate \eqref{e:R_+-G} together with the above Lemma shows that
\be
\|R_+^j ( z )\|_{H_{G}(X) \to  \cH_j}= \cO(h^{-M_0\eps}) \,,\qquad z\in \DOCh \,.
\ee
The operators $ R_+^j ( z ) $ are globally well-defined once we
choose a specific realization of $R_{+,m}^j ( z )$, which gives a unique definition mod $\cO(h^\infty )$. 
We have thus obtained a family of operators
$$ 
R_+ ( z )  \defeq (R_+^1,\ldots,R_+^N) \; : \; H_G ( X ) \longrightarrow \cH_1 \times \cdots \cH_N \,.
$$ 
In turn, the operators $R_-^j ( z )$ 
are obtained by selecting a realization of the microlocally defined operator 
$ R_{-,m}^j ( z ) $ on $H_{g_j}(\tSigma_j)$, and restricting that realization
to $\cH_j$:
\begin{equation}
\label{eq:R-nf}
R_{-}^j ( z )=R_{-,m}^j ( z )\,\Pi_j \; : \; \cH_j\longrightarrow  H_G (X)\,.
\end{equation}
Again, these operators are well defined mod $\cO(h^\infty )$. 
Putting together \eqref{e:R_+-G} with \eqref{e:projection}
ensures that
$$
\|R_-^j ( z )\|_{\cH_j\to H_G}=\cO(h^{-M_0\eps})\,.
$$
We group these operators into
\be
\begin{split}
R_- ( z ) \; &:\; \cH_1 \times \cdots 
\cH_N \longrightarrow H_G(X) \\
R_-(z)u_- &= \sum_{j=1}^N R_-^j ( z )\,u_-^j \,, \qquad
u_- = (u_-^1,\ldots,u_-^N) \,. 
\end{split}
\ee

\subsection{A well posed Grushin problem}\label{s:final} 
With these definitions we consider the following Grushin problem:
\begin{gather}
\label{gl.4}
\begin{gathered}
\cP(z) \; : \; 
H_G^2\times\cH \to H_G \times \cH,\qquad  \cH\defeq\cH_1 \times \cdots \cH_N\,, \\
\cP(z) \defeq \begin{pmatrix} ( i/h) ( P_{\theta,R}(h)-z)  &R_-(z)\\ \ \ 
R_+ ( z ) 
    &0 \end{pmatrix}\,,\qquad z\in   \DOCh \,. 
\end{gathered}
\end{gather}
Since $P_{\theta,R}(h)-z$ (which we will denote by $P-z$ for short) is a Fredholm operator,
so is $ \cP ( z ) $, as we have only added finite dimensional spaces.
For $ \Im z > 0 $ the operator $  (P - z) $ is 
invertible, so Lemma~\ref{l:indg} shows that the index of 
$ \cP ( z ) $ is $ 0 $.
Hence, in order to
prove that $\cP(z)$ is bijective it suffices to to construct an approximate
right inverse and then use a Neumann series. 
The rest of this section will be devoted to 
the proof of this (approximate) right invertibility of $\cP(z)$.

\subsubsection{A well-posed homogeneous problem}
As before we first consider the homogeneous problem
\ekv{gl.5}
{
{(i/ h) (P-z)u+R_-(z)u_-=0\,,  \qquad R_+(z)u=v_+\,,}
}
where only one component $v_+^k$ is nonzero (we may assume that $\|v_+^k\|_{\cH_1}=1$).
For that we adapt the methods of \S\ref{s:homog-K0}. We 
construct an approximate solution using the extended
Poisson operator $K^+_k( z)$ (that operator acts on the microlocal space 
$H_{g_k}(\tSigma_k)$, so its action on $\cH_k$ is well-defined modulo $\cO(h^\infty)$),
and take
\[ 
u =\chi _b^k\,K^+_k(z)\,v_+^k \,, 
\]
where $ \chi_b^k $ is the backwards cutoff function with properties
given in \eqref{so.5.0},\eqref{so.4.1} and \eqref{so.5}. The fact that
$G$ increases along the trajectories implies that $u$ satisfies the
same norm bound as with the ``old norms'' (see \eqref{e:uk-norm}):
$$
\| u \|_{H_G(X)}\lesssim h^{-M_0(t_{\max}+\eps)}\,\|v_+^k\|_{\cH_k}\,.
$$
The microlocally defined operator satisfies
$$ 
R^k_{+,m}(z)\,u \equiv v_+^k + \cO_{H_{g_k}}(h^\infty)\,,\qquad R^j_{+,m}(z)\,u =\cO_{H_{g_j}}(h^\infty),\quad j\neq k\,.
$$
As a result, projecting the left hand side onto $\cH^k$ has a negligible effect:
$$
R_{+}^k(z)\,u = \Pi_k(v_+^k+\cO(h^\infty))=v_+^k+\cO_{\cH_k}(h^\infty)\,.
$$
Following \eqref{eq:sol1} we write
\begin{equation}
\label{eq:nnew}
(i/ h) (P-z)u \equiv [(i/h) P,\chi_b^k]\,K^+_k(z) v_+ \in H_G (X) \,. 
\end{equation}
As noticed in \S\ref{s:homog-K0}, the transport properties of $K^+_k(z)$ show that
$u$ is microlocalized inside the union of tubes $\cup_{j\in J_+(k)}T_{jk}^{++}(z)$, so
the right hand side in \eqref{eq:nnew} 
splits into a component concentrated near $\tD_{k}$, and other components concentrated near 
the arrival sets $A_{jk}(z)$, $j\in J_+(k)$. We rewrite \eqref{eq:sol4} for the present data:
\be\label{e:sol4-wp}
(i/ h) (P-z)u \equiv R_{-,m}^k (z) v_+^k - \sum_{j\in J_+(k)}R_{-,m}^j ( z ) \cM_{jk}( z ) v_+^k\,.
\ee
Each state $\cM_{jk}( z ) v_+^k$ is microlocalized inside the 
arrival set $\tA_{jk}(z)\subset\tSigma_j$, which is not contained in $S_j$ in general -- see the remark at the
end of \S \ref{gps} and Fig.~\ref{f:setV}. 

Consequently one could fear that
replacing the operators $R_{-,m}^j(z)$ by the truncated operators
$R_{-}^j(z)$ would drastically modify the above right hand side.
The microlocally weighted spaces $H_G$, $H_{g_j}$ have been constructed precisely to avoid this problem.
The mechanism is a direct consequence of the relative properties of the sets $S_j$ and $V$ explained in
Lemma~\ref{l:t_0}. Namely, a point $\rho_k\in S_{jk}$ is either ``good'', if its image 
$\rho_j=F_{jk,z}(\rho_k)\in S'_j$, or ``bad'', in which case
\begin{equation}
\label{eq:G00}
G_0(\rho_j) - G_0(\rho_k)\geq t_0 \,.
\end{equation}
Let us choose a cutoff
\be\label{e:chi_j}
\chi_{j}\in\CIc(S_j),\quad \chi_j=1\ \ \text{on } S'_j\,,\quad \chi_j=0\ \ \text{outside }\neigh(S'_j,S_j)\,.
\ee
Since the Fourier integral operator $\cM_{jk}(z):H(\tD_{k})\to H(\tA_{jk}(z))$ is uniformly bounded, 
\eqref{eq:G00} implies the norm estimate (see Lemma~\ref{l:G})
$$
\forall v_+^k\in \cH_k\,,\qquad \|(1-\chi^w_j)\,\cM_{jk}(z)\,v_+^k
\|_{H_{g_j}}\lesssim h^{M t_0-M_0 t_{\max}}\,\|v_+^k\|_{\cH_k},\qquad z\in \DOCh\,.
$$
For this estimate to be small when $h\to 0$, we require the
ratio $M_0/M$ to be small enough to ensure the condition
$$
t_0 -\frac{M_0}{M}t_{\max}\geq t_0/2 > 0\,.
$$
(The bounds \eqref{e:theta} and $M_0\leq M_1$ show that the ratio
$M_0/M$ can indeed be chosen arbitrary small.)

On the other hand, $\chi^w_j\,\cM_{jk}(z)\,v_+^k$ is microlocalized inside
$\neigh(S'_j, S_j)$, so Lemma~\ref{l:func} implies that
$(\Pi_j-1)\chi^w_j\,\cM_{jk}(z)\,v_+^k=\cO_{H_{g_j}}(h^\infty)$.
Putting these estimates altogether, we find that
\be
\forall v_+^k\in \cH_k,\qquad \cM_{jk}(z)\,v_+^k=\Pi_j\,\cM_{jk}(z)\,v_+^k + \cO(h^{Mt_0/2})\,\|v_+^k\|\,.
\ee
This crucial estimate shows that the projection of
$\cM_{jk}(z)\,v_+^k$ on $\cH_j$ has a negligible effect. We now define the
finite rank operators
\be\label{e:M_jk}
\tM_{jk}(z)\defeq \begin{cases} 
\Pi_{j}\,\cM_{jk}(z)\,\Pi_{k}:\cH_k\to \cH_j\,,\ j\in J_+(k)\,,\\
0\quad\text{otherwise}\,,
\end{cases}
\text{in short}\ \tM(z)=\Pi_h \cM(z)\,\Pi_h\,.
\ee
This operators satisfy the same norm bounds \eqref{e:M-norm-bound} as their infinite rank counterparts.
Using these operators, and remembering that the operators
$R_{-}^j:\cH_j\to H_G(X)$ are bounded by $\cO(h^{-M_0\eps})$, 
we rewrite \eqref{e:sol4-wp} as
$$
(i/ h) (P-z)u \equiv R_{-}^k (z) v_+^k - \sum_{j\in J_+(k)} R_{-}^j( z
) \tM_{jk}(z)\,v_+^k + \cO(h^{M t_0 / 3})\,\|v_+^k\| \,.
$$
Generalizing the initial data to arbitrary $v_+\in \cH_1\times\cdots\times\cH_N$, we obtain the
\begin{prop}\label{gl1}
Assume $z\in \DOCh$. Let $v_+\in \cH$. Then 
there exists $ (u,u_-) \in H^2_G(X) \times \cH$ such that 
\begin{align}\label{gl.6}
(i/ h) (P-z)u + R_- ( z )\,u_- &= \cO (h^{M t_0 / 3})\| v_+ \|_{\cH}\qquad \hbox{in }H_G(X),\\
\label{gl.6.5}
R_+( z ) u&=v_+ + \cO (h^\infty)\,\| v_+\|_{\cH} \qquad\hbox{in }\cH,\\
\label{gl.7}
\| u\|_{H_G(X)}\lesssim h^{-M_0(t_{\max}+\eps)}\,\|
v_+\|_{\cH},&\qquad \| u_-\|_{\cH} \lesssim h^{-M_0 t_{\max}}\| v_+\|_{\cH}.
\end{align}
The second part of the solution, $u_-$, is of the form
$$
u_-=  ( \tM(z) - Id ) v_+ \,,\quad \| \tM(z)\|_{\cH\to \cH} \lesssim h^{-M_0 t_{\max}}\,,
$$
where $\tM ( z ) = ( \tM_{jk} ( z ) )_{j,k=1,..,N}$ is the matrix of operators defined in \eqref{e:M_jk}. 
\end{prop}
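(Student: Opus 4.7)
The plan is to assemble the microlocal solution constructed in \S\ref{s:homog-K0} together with the finite-rank truncations $\Pi_j$ of \S\ref{s:finite-dim}, and to estimate the truncation error using the escape-function gain supplied by Lemma \ref{l:t_0} and Lemma \ref{l:G}. By linearity it suffices to treat the case where only the $k$-th component $v_+^k\in\cH_k$ of $v_+$ is nonzero; the general case follows by summing over $k$. Guided by \eqref{eq:sol-homog}, the Ansatz will be
$$
u\defeq \chi_b^k\,K_k^+(z)\,v_+^k,\qquad u_-^k\defeq -v_+^k,\qquad u_-^j\defeq \tM_{jk}(z)\,v_+^k\ \ (j\in J_+(k)),
$$
with $u_-^\ell\defeq 0$ otherwise. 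By the definition \eqref{e:M_jk} of $\tM_{jk}$ each $u_-^\ell$ lies in $\cH_\ell$, and the norm bounds in \eqref{gl.7} follow from \eqref{e:norm_K} and \eqref{e:M-norm-bound} interpreted in the spaces $H_G$, $H_{g_j}$ via Proposition \ref{p:G_0}.

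\textbf{Checking the two equations.} For \eqref{gl.6.5}, the cutoff $\chi_b^k$ vanishes on $\supp\chi_f^j$ for $j\neq k$, as a consequence of \eqref{so.4.1}, \eqref{so.5.0} and the fact that $\chi_b^k$ has already dropped back to $0$ by the time one reaches $\Sigma_j$. This forces $R_{+,m}^j u=\Oo(h^\infty)$ for $j\neq k$, while microlocally near $\Sigma_k$ the identity \eqref{eq:KK} gives $R_{+,m}^k u\equiv v_+^k$. Applying $\Pi_k$ and using $\Pi_k v_+^k=v_+^k$ then yields \eqref{gl.6.5}. For \eqref{gl.6}, applying $(i/h)(P-z)$ to the Ansatz and invoking \eqref{e:sol4-wp} produces
$$
(i/h)(P-z)u + R_-(z)u_- \equiv -\sum_{j\in J_+(k)} R_{-,m}^j(z)\,(1-\Pi_j)\,\cM_{jk}(z)\,v_+^k \pmod{\Oo(h^\infty)},
$$
since the $R_{-,m}^k v_+^k$ term cancels against $R_-^k u_-^k=-R_{-,m}^k\Pi_k v_+^k$, and the substitution $u_-^j=\Pi_j\cM_{jk}(z)v_+^k$ turns $R_{-,m}^j\cM_{jk}v_+^k$ into $R_{-,m}^j(1-\Pi_j)\cM_{jk}v_+^k$. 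It thus remains to show that this residual is $\Oo(h^{Mt_0/3})\|v_+^k\|_{\cH_k}$.

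\textbf{Estimating the residual; main obstacle.} Using the cutoff $\chi_j$ of \eqref{e:chi_j}, I split $\cM_{jk}(z)v_+^k = \chi_j^w\cM_{jk}(z)v_+^k + (1-\chi_j^w)\cM_{jk}(z)v_+^k$. The first piece is microlocalized in $\neigh(S_j', S_j)$, so Lemma \ref{l:func} yields $(1-\Pi_j)\chi_j^w\cM_{jk}(z)v_+^k = \Oo(h^\infty)$. The second piece has its wavefront set inside $\tF_{jk,z}(\tD_{jk})\setminus S_j'$, which by Lemma \ref{l:t_0} corresponds to trajectories spending time at least $t_0$ outside the flow-invariant set $W$; along such trajectories $G_0$ strictly increases by at least $t_0$ by \eqref{e:V-V_1}, and Lemma \ref{l:G} gives
$$
\|(1-\chi_j^w)\cM_{jk}(z)v_+^k\|_{H_{g_j}} \lesssim h^{Mt_0 - M_0 t_{\max}}\,\|v_+^k\|_{\cH_k}.
$$
Composing with $R_{-,m}^j(z) = \Oo(h^{-M_0\eps})$ delivers \eqref{gl.6}. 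The hard part is precisely this balance $Mt_0 \gg M_0 t_{\max}$: the geometric data fix $t_{\max}$, the choice of the sets $S_j'\subset S_j$ and of $V$ determines $t_0$ via Lemma \ref{l:t_0}, and one must finally choose the ratio $M_0/M$ small enough (compatibly with \eqref{e:theta} and with the hypothesis $M_0\leq M_1$ of Theorem \ref{t:s}, possibly tightened in \S\ref{s:final}) so that $Mt_0 - M_0 t_{\max} \geq Mt_0/2$ and $Mt_0/2 - M_0\eps \geq Mt_0/3$. Once this balance is secured, the result follows by linearity in $v_+$.
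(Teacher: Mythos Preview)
Your proof is correct and follows essentially the same route as the paper: the same Ansatz $u=\chi_b^k K_k^+(z)v_+^k$, $u_-=(\tM(z)-I)v_+$, the same splitting of $\cM_{jk}(z)v_+^k$ via the cutoff $\chi_j$, and the same balance $M_0/M\ll 1$ to absorb the $h^{-M_0 t_{\max}}$ loss into the escape-function gain $h^{Mt_0}$. Two minor referencing points: the increase $G_0(\rho_j)-G_0(\rho_k)\geq t_0$ does not come from \eqref{e:V-V_1} (which concerns points outside $V_1'$ under the time-$T$ flow) but directly from Lemma~\ref{l:t_0} combined with the first and third lines of \eqref{eq:gs0}; and the vanishing of $\chi_b^k$ on $\supp\chi_f^j$ for $j\in J_+(k)$ needs the partition of unity \eqref{so.5} in addition to \eqref{so.5.0} (the latter, for index $j$, gives $\chi_b^j=1$ there, and \eqref{so.5} then forces $\chi_b^k=0$).
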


We collect some properties of the operators $\tM_{jk}(z)$, $j\in
J_+(k)$, for $z\in [-\delta,\delta]$:
\begin{itemize}
\item $\tM_{jk}(z)$ is uniformly bounded, and $\mathrm{WF}_h'( \tM_{jk} ( z ))\subset \overline{S}_j\times\overline{S}_k$.
\item take $\rho _k\in \overline{S}_k$, $\rho _j=\tilde{F}_{jk,z}(\rho _k)\in \overline{S}_j$:
\begin{enumerate}
\item if the trajectory segment connecting the points $\kappa_{k,z}(\rho_k)$, $\kappa_{j,z}(\rho_j)$ is contained in $W$, 
   then microlocally near $(\rho _j,\rho _k)$,  $\tM_{jk} ( z )$ is an $h$-Fourier integral operator of order
  zero with associated canonical transformation $\tilde{F}_{jk,z}=\kappa_{j,z}^{-1}\circ F_{jk,z}\circ \kappa_{k,z}$ 
\item if furthermore the above segment is disjoint from the support of
$G$, then $\tM_{jk} ( z )$ is microlocally unitary near $(\rho _j,\rho _k)$.
\item if, on the opposite, this segment contains a part outside $W$, 
  then there exist $\chi _j\in  \CIc (\neigh(\rho _j))$, $\chi _k\in  \CIc (\neigh(\rho _k))$, equal to $1$ near $\rho _j$
  and $\rho _k$ respectively, and a time $t(\rho_k)>0$ independent of
  the exponent $M$, such that 
$$
\chi^w_j\,\tM_{jk} ( z) \chi^w_k=\cO(h^{M\,t(\rho_k)}): H_{g_k}\to H_{g_j}\,.
$$ 
\end{enumerate}
\end{itemize}
For $z\in\DOCh$ similar statements hold, modulo the fact that the
symbol of the Fourier integral operator is multiplied by
$\exp(-izt_+/h)$, which modifies the order of the operator.

\subsubsection{A well-posed inhomogeneous problem}
Let us now consider the inhomogeneous problem
\be\label{e:inhomog2}
(i/h) (P_{\theta,R}-z)u+R_-(z)u_- = v\,\qquad v\in H_G(X)\,.
\ee
We will use a partition of unity to decompose $v$ into several component.

Take $\psi_\delta\in S(T^*X)$, $\psi_\delta=1$ near $p^{-1}([-\delta/2,\delta/2])$,
and $\psi_\delta=0$ outside $p^{-1}([-\delta,\delta])$. The operator $(P_{\theta,R}-z)$ is elliptic outside $p^{-1}[-\delta/2,\delta/2]$.
Taking $\tpsi_\delta$ similar with $\psi_\delta$ but with $ \supp \tpsi_\delta \subset p^{-1} ( [ -\delta/2, \delta/2])$,
the operator 
$$L\defeq (P_{\theta,R}-z-i\tpsi_\delta^w):H^2_G\to H_G$$ 
is invertible, with uniformly bounded inverse $L^{-1}\in\Psi_h^{0}$.
Hence, by taking 
$$
u=(h/i) L^{-1}\,(1-\psi_\delta^w)\,v\,,
$$
we find
$$
(i/h) (P_{\theta,R}-z) u = (i/h) (P_{\theta,R}-z-i\tpsi_\delta^w) u + \cO(h^\infty)\,\|u\|= (1-\psi_\delta^w)\,v+\cO(h^\infty)\|v\|\,,
$$
which solves our problem for the data $(1-\psi_\delta^w)\,v$. The first equality uses pseudodifferential calculus
and the fact that $ \psi_\delta \equiv 1 $ on the support of $ \tpsi_\delta $:
\[ \tpsi_\delta^w L^{-1} ( 1 - \psi_\delta^w) = {\mathcal O}_{{\mathscr S}' \rightarrow {\mathscr S}} ( h^\infty) \,. \]

Let us now consider the data $(\psi_\delta^w v)$ microlocalized in $p^{-1}([-\delta,\delta])$. 
We split this state using a spatial cutoff $\psi_{R}\in \CIc(X)$, such that $\psi_{R}=1$  in
$B(0,R)$, $\psi_R=0$ outside $B(0,2R)$. To solve the equation 
\be\label{e:inhom33}
(i/h) (P_{\theta,R}-z)u= \wtv,\qquad \wtv=(1-\psi_R)\, \psi_\delta^w\,v\,, 
\ee
we take the Ansatz
\be\label{e:simple}
u= E(z)\,\wtv\,,
\ee
with $E(z)$ the parametrix of \eqref{ml.9} (with $P$ replaced by
$P_{\theta,R}$), for the same time $T=t_{\max}+\eps$ as in \eqref{eq:T}. It satisfies 
\be\label{e:simple2}
(i/h)(P_{\theta,R}-z)u=\wtv - e^{-iT(P_{\theta,R}-z)/h}\,\wtv\,.
\ee
The time $T$ is small enough, so that 
\[  
\Phi^t \left( p^{-1}([-\delta,\delta])\setminus T^*_{B(0,R)}X \right) \cap T^*_{B(0,R/2)}X = \emptyset 
\,, \ \ 0 \leq t \leq T\,. 
\]
Hence, the states 
\[ 
\wtv(t) \defeq e^{-it(P_{\theta,R}-z)/h}\wtv 
\]
are all microlocalized outside $T^*_{B(0,R/2)}X$
for $t\in [0,T]$. 
The estimate \eqref{eq:estaway} (adapted to the weight $G_0$) then implies that \cite[Lemma 6.4]{NZ2}
$$
\partial_t \|\wtv(t)\|^2_{H_G}=\frac{2}{h}\Im \la
(P_{\theta,R}-z)\wtv(t),\wtv(t)\ra_{H_G} \leq (-M_1/C_1 + 2M_0)\,\log(1/h)\,,\quad \forall t\in [0,T]\,,
$$
where $C_1>0$ is independent of the choice of $M_1$. Once more,
we assume $M_0/M_1$ is small enough so that $-M_1/C_1 + 2M_0\leq -
M_1/2C_1$, and hence
$$
\|e^{-iT(P_{\theta,R}-z)/h}\,\wtv\|_{H_G}\leq C\,h^{M_1 T /2C_1}\|\wtv\|_{H_G}\,, 
$$
so the problem \eqref{e:inhom33} is solved modulo a remainder
$\cO(h^{M_1 T /2C_1})$.

We now consider the component $(\psi_R\psi^w_\delta v)$ microlocalized
in $T^*_{B(0,2R )}\cap p^{-1}([-\delta,\delta])$. 
We split it again using
a cutoff $\psi_{V_1}\in\CIc(V_1)$, $\psi_{V_1}=1$ in the set
$V'_1\Subset V_1$ (see the discussion after Lemma~\ref{l:G0}). 
To solve the problem for the inhomogeneous data 
$$
\wtv=(1-\psi^w_{V_1})\psi_R\psi^w_\delta v\,,
$$ 
we use the Ansatz \eqref{e:simple},
resulting in the estimate \eqref{e:simple2}. The microlocalization of $\wtv$ outside of $V'_1$, 
together with the assumption \eqref{e:V-V_1}, implies the norm estimate (see Lemma~\ref{l:G}) 
$$
\|e^{-iT(P_{\theta,R}-z)/h}\,\wtv\|_{H_G}\leq C\,h^{ M t_1 / 2 - M_0 T }\|\wtv\|_{H_G}\,.
$$
Again, we assume $M_0/M$ small enough, so that $M t_1 / 2 - M_0 T \geq
M t_1 / 3 $. We have solved the problem for $\wtv$ up to a remainder
$\cO(h^{M t_1 / 3}) \|\wtv\|_{H_G}$.

We finally consider the data $\wtv=\psi^w_{V_1}\psi_R\psi^w_\delta v$
microlocalized inside $V_1$. For this data, we can use the microlocal
analysis of \S\ref{s:inhomog-K0}. If $\WFh(\wtv)$ is contained inside $V_1\cap\whT^{--}_{jk}$, then
$ \WFh (\chi _b^k\,E(z)\,\wtv ) $ (see the Ansatz \eqref{eq:uu}) 
will intersect $\Sigma_j$ inside the arrival set $\tA_{jk}(z)$, but not
necessarily inside $S_{j}$. However, the same phenomenon as in Lemma~\ref{l:t_0} occurs:
there exists a time $t_3>0$ such that, for any $z\in [-\delta,\delta]$ and any  $\rho\in V_1\cap T^{--}_{jk}(z)$, 
\be\label{e:t_3}
\rho_+(\rho)\in \Sigma_j(z)\setminus \kappa_{j,z}(S'_j)
\Longrightarrow G_0(\rho_+(\rho)) - G_0(\rho)\geq t_3\,.
\ee
If we decompose $R_{+,m}^j( z )E(z)\wtv$ using the cutoff $\chi_{j}$ of \eqref{e:chi_j},
the property \eqref{e:t_3} implies that
$$
\|(1-\chi_j^w)\,R_{+,m}^j( z )E(z)\wtv\|_{H_{g_j}}=\cO(h^{M t_3 / 2 - M_0 T})\|\wtv\|_{H_G}\,.
$$
Again we assume $M_0/M$ small enough, so that $M t_3 / 2 - M_0 T \geq
M t_3 / 3$.
Hence, if we set
\begin{align*}
u_-^j&= R_+^j ( z ) \chi_j^w\,E(z)\wtv\\
&=R_{+,m}^j( z ) \chi_j^w E(z)\wtv+\cO(h^\infty) \|\wtv\|_{H_G} \\
&= R_{+,m}^j( z ) E(z)\wtv+\cO(h^{Mt_3/3}) \|\wtv\|_{H_G}\,,
\end{align*}
we end up with a solution of \eqref{e:inhomog2} modulo a remainder
$\cO(h^{ M t_3 / 3 })\|\wtv\|_{H_G}$.


\bigskip

We recall that $M_1/M$ is bounded by \eqref{e:theta}, so all the above
error estimates can be put in the form $\cO(h^{cM}) \|\wtv\|_{H_G}$, with $c>0$
independent of $M$: we have thus
shown that the problem \eqref{e:inhomog2} admits a solution for any
$v\in H_G$, up to this remainder. We may then
apply Proposition \ref{gl1} to solve the resulting homogeneous
problem, and get an approximate solution for the full problem \eqref{gl.4}. We 
summarize this solution in the following
\begin{prop}\label{gl3}
Assume $z\in \DOCh $. Let $(v,v_+)\in H_G\times \cH$. Then
there exists $(u,u_-)\in H^2_G\times \cH$ such that 
\begin{align}\label{gl.10}
&\begin{cases} (i/h)(P-z) u + R_-(z) u_-&= v+\cO (h^{cM})(\Vert v\Vert_{H_G}+ \Vert v_+\Vert_{\cH})\qquad \hbox{in }H_G(X)\,, \\  
 R_+( z ) u&=v_+ +\cO(h^{\infty})\ (\Vert v\Vert_{H_G} + \Vert v_+\Vert_{\cH})\qquad\hbox{in }\cH\,,\end{cases}\\
\label{gl.11}&\Vert u\Vert_{H_G^2}+\Vert u_-\Vert_{\cH} \lesssim
h^{-M_0 (2 t_{\max} + 2\eps)}\,
\big( \Vert v\Vert_{H_G}+ \Vert v_+\Vert_{\cH} \big)\,.
\end{align}
\end{prop}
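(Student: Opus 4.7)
The plan is to split Proposition~\ref{gl3} into two pieces: first construct an approximate solution $(\tilde u,\tilde u_-)$ of the inhomogeneous equation $(i/h)(P-z)\tilde u+R_-(z)\tilde u_-=v$ (ignoring the constraint $R_+(z)\tilde u=v_+$), then use Proposition~\ref{gl1} to correct the resulting spurious transverse trace $R_+(z)\tilde u$ by a solution of the homogeneous problem. Because all the microlocal work has already been done in \S\ref{s:inhomog-K0} and in the subsection that precedes this statement, the proof amounts to assembling those pieces and bookkeeping the $h$-exponents.

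The first step is to decompose $v\in H_G$ using the partition
\[
1=(1-\psi_\delta^w)+(1-\psi_R)\psi_\delta^w+(1-\psi_{V_1}^w)\,\psi_R\,\psi_\delta^w+\psi_{V_1}^w\,\psi_R\,\psi_\delta^w\,,
\]
which cuts $v$ into four contributions: one supported where $P_{\theta,R}-z$ is elliptic, one at spatial infinity (where the complex scaling estimate \eqref{eq:estaway} applies), one in the interaction region but away from the trapped set (where the escape property \eqref{e:V-V_1} gives a gain $h^{Mt_1/3}$ from Lemma~\ref{l:G}), and one microlocalized inside $V_1$. For the first three pieces, the constructions carried out just before the statement provide approximate preimages under $(i/h)(P-z)$ with errors of size $h^{cM}\|v\|_{H_G}$. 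For the fourth piece, the argument of \S\ref{s:inhomog-K0} yields a forward parametrix answer $\tilde u=\chi_b^{k\,w}E(z)\tilde v$ together with $\tilde u_-^j=R_+^j(z)\chi_j^w E(z)\tilde v$; the use of the cutoff $\chi_j$ and the gap \eqref{e:t_3} ensures, via Lemma~\ref{l:G}, that the truncation $R_{+,m}^j\to R_+^j=\Pi_j R_{+,m}^j$ only costs $\cO(h^{Mt_3/3})$ once $M_0/M$ is taken sufficiently small. Summing the four pieces produces $(\tilde u,\tilde u_-)\in H^2_G\times\cH$ with
\[
(i/h)(P-z)\tilde u+R_-(z)\tilde u_-=v+\Oo(h^{cM})\|v\|_{H_G}\,,
\]
and with the norm bound $\|\tilde u\|_{H_G^2}+\|\tilde u_-\|_{\cH}\lesssim h^{-M_0(t_{\max}+2\eps)}\|v\|_{H_G}$.

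It remains to enforce the second equation. Applying $R_+(z)$ to $\tilde u$ gives a vector $\tilde v_+\in\cH$ of norm $\lesssim h^{-M_0(t_{\max}+\eps)}\|v\|_{H_G}$. Feeding the data $v_+-\tilde v_+$ into Proposition~\ref{gl1} produces a pair $(\hat u,\hat u_-)$ solving the homogeneous problem with this data, up to errors $\cO(h^{Mt_0/3})\|v_+-\tilde v_+\|_{\cH}$ in the first equation and $\cO(h^{\infty})$ in the second, and with norms controlled by $h^{-M_0(t_{\max}+\eps)}\|v_+-\tilde v_+\|_{\cH}$. Setting $(u,u_-)\defeq(\tilde u+\hat u,\tilde u_-+\hat u_-)$ then yields \eqref{gl.10} and \eqref{gl.11}, provided we redefine the constant $c>0$ to absorb the various exponents encountered along the way.

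The only genuine obstacle is verifying that all the error exponents produced in the four regional constructions and in Proposition~\ref{gl1} can be reduced to a single positive power $h^{cM}$ with $c$ independent of the choice of $M$. This boils down to the requirement $M_0/M\ll 1$: the ratios $M_1/M$ and $M_0/M$ must be small enough that each of the competitions $Mt_1/2-M_0T$, $Mt_3/2-M_0T$, $Mt_0-M_0 t_{\max}$, and $M_1T/2C_1-M_0T$ remains bounded below by a fixed positive multiple of $M$. Since $M_1$ is constrained by \eqref{e:theta} and $M_0\leq M_1$ by hypothesis, choosing $M$ sufficiently large (equivalently $\delta_0$ sufficiently small in the construction of the escape function) makes all these lower bounds uniform in $h$, and the proposition follows.
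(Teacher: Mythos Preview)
Your proof is correct and follows the same approach as the paper: the four-piece partition of $v$ using $\psi_\delta$, $\psi_R$, $\psi_{V_1}$ is exactly the decomposition carried out in \S\ref{s:final}, and the subsequent correction of the transverse trace via Proposition~\ref{gl1} is precisely how the paper concludes. Your bookkeeping of the exponents and the final remark on the smallness of $M_0/M$ also match the paper's reasoning.
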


\subsection{Invertibility of the Grushin problem}
\label{igp}

We can transform this approximate solution into an exact one. The system \eqref{gl.10} can be expressed as an approximate
inverse of $\cP(z)$:
\begin{gather}
\label{eq:hooarow}
\begin{gathered}
\binom{u}{u_-}=\widetilde\cE(z)\binom{v}{v_+},\\
\cP(z)\,\widetilde\cE(z)=
I+\cR(h):H_G\times\cH \longrightarrow  H_G\times\cH \,,\quad \| \cR(h) \|=\cO(h^{cM})\,.
\end{gathered}
\end{gather}
For $h$ small enough the operator $I+\cR(h)$ can be inverted by a 
Neumann series, so we obtain an exact right inverse of $\cP(z)$,
$$
\cE(z)=\widetilde\cE(z)\,(I+\cR(z))^{-1}\,.
$$  
Since $\cP(z)$ is of index zero, $\cE(z)$ is also a left inverse, which proves the well-posedness
of our Grushin problem \eqref{gl.4}.
\begin{thm}
\label{t:mg}
We consider $h>0$ small enough, and $z\in \DOCh$. 
For every $(v,v_+)\in H_G\times \cH$, there exists a unique
$(u,u_-)\in H_G^2 \times \cH$ such that 
\begin{equation}
\label{gl.12}
\begin{cases}
(i/ h) (P_{\theta,R}-z)u+R_-( z ) u_- & = v \\
R_+ ( z ) u & =v_+ \,, \end{cases}
\end{equation}
where $ R_\pm ( z ) $ are defined by \eqref{eq:R+nf} and \eqref{eq:R-nf}.
The estimates (\ref{gl.11}) hold, so if we write 
$$
\binom{u}{u_-}=\cE(z)\,\binom{v}{v_-},\qquad \cE(z)=\begin{pmatrix}E &E_+\\ E_- &E_{-+} \end{pmatrix}\,,
$$
then the following operator norms (between the appropriate Hilbert
spaces) are bounded by:
\ekv{gl.13}
{
\Vert E\Vert\,,\ \   \Vert E_+\Vert,\ \ 
\Vert E_-\Vert \,, \ \  \Vert E_{-+}\Vert = \cO ( h^{-M_0 (2 t_{\max} + 2\eps)} ).
}
Moreover, we have a precise expression for the effective Hamiltonian:
\ekv{gl.14}
{
E_{-+}(z)= -I + \tM ( z ) +\cO_{\cH\to\cH}(h^{c'M})\defeq - I + M(z,h)\,,
}
where $\tM(z)$ is the matrix of ``open quantum maps'' defined in
\eqref{e:M_jk} and described after Proposition \ref{gl1}.
\end{thm}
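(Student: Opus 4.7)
The plan is to upgrade the approximate solvability statement of Proposition~\ref{gl3} to an exact one by a standard Neumann series argument, then read off the structure of the effective Hamiltonian $E_{-+}$ from the homogeneous solution formula in Proposition~\ref{gl1}.

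First I would observe that $\cP(z)$ is Fredholm of index $0$. The operator $(i/h)(P_{\theta,R}-z):H_G^2\to H_G$ is Fredholm because it is an elliptic semiclassical operator outside a compact set (thanks to the complex scaling, cf.\ \S\ref{cs}) and $R_\pm(z)$ have finite rank targets/sources; the index is zero, since for $\Im z$ large and positive $(P_{\theta,R}-z)^{-1}$ exists, so by Lemma~\ref{l:indg} the index of $\cP(z)$ equals the index of the finite rank operator $R_+(z)(P_{\theta,R}-z)^{-1}R_-(z)$ acting between spaces of equal finite dimension, which is $0$; the index is then preserved as $z$ varies in $\DOCh$ by continuity.

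Next, I would encode Proposition~\ref{gl3} as an approximate right inverse: the assignment $(v,v_+)\mapsto (u,u_-)$ constructed there defines a bounded operator
\[
\widetilde\cE(z)\;:\;H_G\times\cH\longrightarrow H_G^2\times\cH\,,\qquad \|\widetilde\cE(z)\|=\cO(h^{-M_0(2t_{\max}+2\eps)})\,,
\]
such that $\cP(z)\widetilde\cE(z)=I+\cR(h)$ with $\|\cR(h)\|_{H_G\times\cH\to H_G\times\cH}=\cO(h^{cM})$. Choosing $M$ large enough (or equivalently $h$ small enough), $I+\cR(h)$ is invertible by Neumann series with $(I+\cR(h))^{-1}=I+\cO(h^{cM})$, and thus
\[
\cE(z)\defeq \widetilde\cE(z)\,(I+\cR(h))^{-1}
\]
is an exact right inverse satisfying the same norm bounds as $\widetilde\cE(z)$, i.e.\ \eqref{gl.13}. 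Because $\cP(z)$ has index zero, $\cE(z)$ is automatically also a left inverse, so the Grushin problem \eqref{gl.12} has a unique solution for every $(v,v_+)$. Decomposing $\cE(z)$ into its four blocks yields the claimed bounds on $E$, $E_\pm$, $E_{-+}$.

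It remains to extract the formula \eqref{gl.14} for $E_{-+}$. Applying $\widetilde\cE(z)$ to $(0,v_+)$ and invoking Proposition~\ref{gl1} gives an approximate effective Hamiltonian $\widetilde E_{-+}(z)v_+=(\tM(z)-I)v_+$ modulo $\cO(h^{cM})\|v_+\|_\cH$ (since the residual in \eqref{gl.6}--\eqref{gl.6.5} is of this size and is absorbed into the $\widetilde\cE$-column of $\cE$ with at worst the loss $h^{-M_0(2t_{\max}+2\eps)}$, which is still dominated by $h^{cM}$ after decreasing $c$). Multiplying on the right by $(I+\cR(h))^{-1}=I+\cO(h^{cM})$ only perturbs $E_{-+}$ by $\cO(h^{c'M})$ in operator norm (using the boundedness of $\tM(z)$ on $\cH$), giving \eqref{gl.14} with $M(z,h)=\tM(z)+\cO_{\cH\to\cH}(h^{c'M})$. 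The description of $\tM(z)$ as a matrix of open quantum maps quantizing the Poincar\'e map $F_z$ was already established in \S\ref{s:homog-K0} and the remarks after Proposition~\ref{gl1}, so the structural claim of the theorem follows.

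The main obstacle I expect is bookkeeping the various polynomial losses $h^{-M_0\cdot(\cdot)}$ against the gains $h^{cM}$: to make the Neumann series converge and to keep \eqref{gl.14} a meaningful asymptotic identity, one must verify that the ratio $M_0/M$ (constrained by \eqref{e:theta}) can indeed be chosen so that $cM-M_0(2t_{\max}+2\eps)>0$ with a definite positive constant, a condition already built into the choices at the end of \S\ref{s:final}. Beyond this, the argument is essentially routine given the machinery assembled in Sections~\ref{gps} and \ref{wpg}.
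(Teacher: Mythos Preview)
Your proposal is correct and follows essentially the same route as the paper: the approximate right inverse from Proposition~\ref{gl3} is upgraded to an exact inverse via a Neumann series, and the Fredholm index-zero property (established just after \eqref{gl.4} using Lemma~\ref{l:indg}, exactly as you do) promotes it to a two-sided inverse. Your extraction of \eqref{gl.14} from Proposition~\ref{gl1} and your remark on balancing $h^{-M_0(\cdot)}$ against $h^{cM}$ are both on target and make explicit steps the paper leaves implicit.
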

\noindent{\bf Remark.} 
If we restrict the parameter $z$ to a rectangle of height $|\Im z|\leq Ch$ instead
of $|\Im z|\leq M_0 h\log(1/h)$, the bounds \eqref{gl.14} become $\|E_*(z)\|=\cO(1)$.

\smallskip

Theorem~\ref{t:s} and the formula \eqref{eq:mumu} follow from this more precise result. In fact, 
the equality \eqref{eq:two_r} shows that 
\begin{equation}
\label{eq:rank}
\rank \oint_z \chi R ( w ) \chi\, d w = \rank \oint_z \chi R_{\theta,R} (w ) \chi\, d w = 
- \frac 1 { 2\pi i } \tr \oint_z R_{\theta,R} ( w ) \, d w\,,
\end{equation}
see \cite[Proposition 3.6]{SjZw91} for the proof of the last identity 
in the simpler case of compactly supported perturbations, and 
\cite[Section 5]{Sj} for the general case.

The well-posedness of our Grushin problem means 
that we can apply formula \eqref{eq:trace} recalled in \S \ref{gp}.
It shows that 
the right hand side in \eqref{eq:rank} is
equal to 
$$
\frac 1 { 2\pi i } \tr \oint_z E_{-+}(w) ^{-1} 
E_{-+}'(w)\, d w\,,
$$
which in view of \eqref{gl.14} gives \eqref{eq:mumu}.
The exponent $L\defeq c'M$ in the remainder of \eqref{gl.14} depends on the integer $M>0$ used in the
scaling of the weight function $G$, which can be chosen arbitrary
large, independently of $c'>0$.



\begin{thebibliography}{10}

\bibitem{AgCo} J.~Aguilar, J.M.~Combes, 
{\it A class of analytic perturbations for one-body Schrödinger Hamiltonians,}
Comm. Math. Phys. 22(1971), 269--279. 

\bibitem{Al} I. Alexandrova,
{\em Semi-Classical Wavefront Set and Fourier Integral Operators,}
Can. J. Math {\bf 60}(2008), 241--263.

\bibitem{BaVo} N.L.~Balazs and A.~Voros, {\it The quantized baker's transformation
}, Ann. Phys. (NY) {\bf 190} (1989) 1--31
   
\bibitem{Bogo} E.~B.~Bogomolny, {\em Semiclassical quantization of multidimensional systems},
Nonlinearity {\bf 5} (1992) 805--866 


 
\bibitem{casati}
F.~Borgonovi, I.~Guarneri and D.L.~Shepelyansky, {\it Statistics of quantum lifetimes
in a classically chaotic system}, Phys. Rev. {\bf A 43} (1991) 4517--4520

\bibitem{BoCh} J.-M. Bony and J.-Y. Chemin, {\em Espaces fonctionnels
associ\'es au calcul de Weyl-H\"ormander,} Bull. Soc. math. France,
{\bf 122}(1994), 77-118.

\bibitem{BorGuarShep91}
F.~Borgonovi, I.~Guarneri and D. L.~Shepelyansky, {\it Statistics of quantum lifetimes
in a classically chaotic system}, Phys. Rev. {\bf A 43} (1991) 4517--4520

\bibitem{BowenWalters72}R.~Bowen and P.~Walters, {\em Expansive One-parameter Flows},
 J. Diff. Equ. \textbf{12} (1972) 180--193





\bibitem{chirikov81} B.V. Chirikov, F.M. Izrailev, D.L. Shepelyansky,
{\it Dynamical stochasticity in classical and quantum mechanics.}
 Mathematical physics reviews, {\bf 2} (1981) 209--267, 
Soviet Sci. Rev. Sect. {\bf 2 C}: Math. Phys. Rev., 2, Harwood Academic, Chur.

\bibitem{CvRosVatRug93} P. Cvitanovi\'c, P. Rosenquist, G. Vattay and H.H. Rugh, {\em A Fredholm determinant
for semiclassical quantization}, CHAOS {\bf 3} (1993) 619--636

\bibitem{DEGr03}M. Degli Esposti and S. Graffi (eds), {\it The mathematical aspects of quantum maps},
Springer, Heidelgerg, 2003.


\bibitem{DiSj} M. Dimassi and J. Sj\"ostrand, {\it Spectral Asymptotics in
the semi-classical limit,} Cambridge University Press, 1999.

\bibitem{DorSmil92} E. Doron and U. Smilansky, 
{\em Semiclassical quantization of chaotic billiards: a scattering theory approach},
Nonlinearity {\bf 5} (1992) 1055--1084; C. Rouvinez and U. Smilansky, 
{\em A scattering approach to the quantization of Hamiltonians in two dimensions -- application to the wedge billiard},
J. Phys. {\bf A 28} (1995) 77--104

\bibitem{BGH}
N.~Burq, C.~Guillarmou, and A.~Hassell,
{\em Strichartz estimates without loss on manifolds 
with hyperbolic trapped geodesics,}
GAFA, {\bf 20}(2010), 627--656.

\bibitem{EZ} L.C. Evans and M. Zworski, {\em Lectures on Semiclassical Analysis,}\\
{\tt http://math.berkeley.edu/$\sim$zworski/semiclassical.pdf}




\bibitem{GaRi} P. Gaspard and S.A. Rice, {\em
Semiclassical quantization of the scattering from a classically chaotic
repellor,} J. Chem. Phys. {\bf 90}(1989), 2242-2254.

\bibitem{GeorPran95}B.~Georgeot and R.~E.~Prange, {\it Fredholm theory for quasiclassical scattering},
Phys. Rev. Lett. {\bf 74}(1995) 4110-œôòó4113


\bibitem{Ge} C. G\'{e}rard, {\em Asymptotique des p\^oles
de la matrice de scattering pour deux obstacles strictement convexes.}
M\'emoires de la Soci\'et\'e Math\'ematique de France S\'er. 2,
{\bf 31}(1988),  1--146.



\bibitem{GeSj} C. G\'erard and J. Sj\"ostrand, {\em Semiclassical resonances
generated by a closed trajectory of hyperbolic type,}  Comm. Math. Phys.
{\bf 108}(1987), 391-421.


\bibitem{Gutz90}M. Gutzwiller, {\em Chaos in classical and quantum mechanics}, Springer, New York, 1990


\bibitem{HeSj} B.~Helffer, J.~Sj\"ostrand, {\it R\'esonances en limite
  semi-classique,}  M\'em. Soc. Math. France (N.S.)  24--25(1986),

\bibitem{Hor} L. H{\"o}rmander, {\em The Analysis of Linear Partial
Differential Operators}, Vol. I, II, Springer-Verlag, Berlin, 1983.

\bibitem{Hor2} L. H{\"o}rmander, {\em The Analysis of Linear Partial
Differential Operators}, Vol. III, IV, Springer-Verlag, Berlin, 1985.

\bibitem{KaHa} A. Katok and B. Hasselblatt, 
{\em Introduction to the Modern Theory of Dynamical Systems,}
Cambridge University Press, 1997.

\bibitem{KNPS06}
J.P.~Keating, M.~Novaes, S.D.~Prado and M.~Sieber,
{\it Semiclassical structure of quantum fractal eigenstates}, 
Phys. Rev. Lett. {\bf 97} (2006) 150406 


\bibitem{Ma} A.~Martinez, {\it Resonance free domains for non
    globally analytic potentials,}  Ann. Henri Poincar\'e  3(4)(2002), 
  739--756. Erratum:  Ann. Henri Poincaré  8(7)(2007), 1425--1431.




\bibitem{NoRu07}
S. Nonnenmacher and M.~Rubin, {\em Resonant eigenstates for a quantized chaotic system},
Nonlinearity {\bf 20} (2007) 1387--1420.

\bibitem{NSZ2}S. Nonnenmacher, J. Sj\"ostrand and M. Zworski, {\em
    Fractal Weyl law for open quantum chaotic maps}, in preparation

\bibitem{NZ1} S. Nonnenmacher and M. Zworski, {\em Distribution
of resonances for open quantum maps,}
Comm. Math. Phys. {\bf 269}(2007), 311--365.

\bibitem{NZ12} S. Nonnenmacher and M. Zworski, {\em 
Fractal Weyl laws in discrete models of chaotic scattering,}
Journal of Physics A, {\bf 38} (2005), 10683-10702.

\bibitem{NZ2} S. Nonnenmacher and M. Zworski, {\em Quantum decay rates
in chaotic scattering,} Acta Math. {\bf 203} (2009) 149--233. 

\bibitem{OzoVall99} A.~M.~Ozorio de Almeida and R.~O.~Vallejos, 
{\em Decomposition of Resonant Scatterers by Surfaces of Section}, 
Ann. Phys. (NY) {\bf 278} (1999) 86--108

\bibitem{PZ} V.~Petkov and M.~Zworski, 
{\em Semi-classical estimates on the scattering determinant,}
Annales H. Poincare, {\bf 2}(2001) 675--711

\bibitem{Ott} 
L. Poon, J. Campos, E. Ott, and C. Grebogi,
{\em Wada basin boundaries in chaotic scattering,}
Int. J. Bifurcation and Chaos {\bf 6}(1996) 251--266

\bibitem{Prosen95}T.~Prosen, {\em General quantum surface-of-section
    method}, J. Phys. {\bf A 28} (1995) 4133--4155


\bibitem{SaVa96} M.~Saraceno and R.O.~Vallejos, {\it The quantized D-transformation}, 
Chaos {\bf 6} (1996) 193--199




\bibitem{schomerus}
H.~Schomerus and J.~Tworzyd{l}o and, {\it Quantum-to-classical crossover of
quasi-bound states in open quantum systems}, 
Phys. Rev. Lett. Phys. Rev. Lett. {\bf 93} (2004) 154102

\bibitem{SjDuke} J. Sj\"ostrand,  {\em
Geometric bounds on the density of resonances for
              semiclassical problems}, 
{ Duke Math. J.}, {\bf 60}(1990), 1--57

\bibitem{Sj} J. Sj\"ostrand, {\em A trace formula and
review of some estimates  for resonances}, in
{\em Microlocal analysis and spectral theory} (Lucca, 1996),
377--437, NATO Adv. Sci. Inst. Ser. C
Math. Phys. Sci., 490, Kluwer Acad. Publ., Dordrecht, 1997.

\bibitem{SjR} J. Sj\"ostrand,
{\em Eigenvalue distribution for non-self-adjoint operators with small mult\
iplicative random perturbations,}
{\tt arXiv:0802.3584}

\bibitem{SjZw91} J. Sj\"ostrand and M. Zworski,
{\em Complex scaling and the distribution of scattering poles}, 
Journal of AMS {\bf 4}(1991), 729--769

\bibitem{SjZw02} J. Sj\"ostrand and M. Zworski,
{\em Quantum monodromy and semiclassical trace formulae,}
J. Math. Pure Appl. {\bf 81}(2002), 1--33.

\bibitem{SjZw04} J. Sj\"ostrand and M. Zworski,
{\em Fractal upper bounds on the density of semiclassical resonances,}
Duke Math. J. {\bf 137}(2007), 381--459.

\bibitem{SZ9}J. Sj\"{o}strand and M. Zworski, \emph{Elementary linear
algebra for advanced spectral problems,}  Annales de l'Institut
Fourier, {\bf 57}(2007), 2095--2141.

\bibitem{TaZw} S.H. Tang and M. Zworski, 
{\em From quasimodes to
resonances,} Math. Res. Lett. {\bf 5}(1998), 261-272.


\bibitem{tworzydlo03}
J.~Tworzyd{l}o, A.~Tajic, H.~Schomerus and C.W.~Beenakker, {\it Dynamical model
for the quantum-to-classical crossover of shot noise}, Phys. Rev. B {\bf 68} (2003) 115313

\bibitem{Voros88}A. Voros, {\em Unstable periodic orbits and semiclassical quantisation}, J. Phys. {\bf A 21} (1988) 685--692


\bibitem{wirzba} A. Wirzba, {\em Quantum Mechanics and Semiclassics of Hyperbolic n-Disk Scattering Systems}, Phys. Rep. {\bf 309}
(1999), 1-116


\end{thebibliography}
\end{document}